\newtheorem{theorem}{Theorem}[section]
\newtheorem{lemma}[theorem]{Lemma}
\newtheorem{thm}[theorem]{Theorem}
\newtheorem{prop}[theorem]{Proposition}
\newtheorem{rem}[theorem]{Remark}
\newtheorem{coro}[theorem]{Corollary}
\newtheorem{defn}[theorem]{Definition}
\newtheorem{con/que}[theorem]{Conjecture/Question}
\newcommand{\ra}{\rightarrow}
\newcommand{\mo}{\mathcal{O}}
\newcommand{\mf}{\mathcal{F}}
\newcommand{\mg}{\mathcal{G}}
\newcommand{\ma}{\mathcal{A}}
\newcommand{\mb}{\mathcal{B}}
\newcommand{\me}{\mathcal{E}}
\newcommand{\mi}{\mathcal{I}}
\newcommand{\mk}{\mathcal{K}}
\newcommand{\ms}{\mathcal{S}}
\newcommand{\mt}{\mathcal{T}}
\newcommand{\ts}{\textbf{S}}
\newcommand{\bi}{\mathbf{I}}
\newcommand{\mq}{\mathcal{Q}}
\newcommand{\cd}{\mathcal{D}}
\newcommand{\E}{\mathscr{E}}
\newcommand{\F}{\mathscr{F}}
\newcommand{\G}{\mathscr{G}}
\newcommand{\hh}{\mathscr{H}}
\newcommand{\B}{\mathscr{B}}
\newcommand{\C}{\mathscr{C}}
\newcommand{\Proj}{\operatorname{Proj}\nolimits}
\newcommand{\Hom}{\operatorname{Hom}}
\newcommand{\Ext}{\operatorname{Ext}}
\newcommand{\Pic}{\operatorname{Pic}}
\newcommand{\Tor}{\operatorname{Tor}}
\newcommand{\SI}{\operatorname{SI}}
\newcommand{\Rep}{\operatorname{Rep}}
\newcommand{\Mat}{\operatorname{Mat}}
\def\<{\langle}
\def\>{\rangle}
\newcommand{\z}{\Theta}
\newcommand{\md}{M(rH,0)}
\newcommand{\mdh}{M(dH,0)}
\newcommand{\wrn}{M(r,0,n)}
\newcommand{\wrr}{M(r,0,r)}
\newcommand{\lcd}{\lambda_{c^r_n}(d)}
\newcommand{\lcn}{\lambda_d(c^r_n)}
\newcommand{\crn}{c^r_n}
\newcommand{\p}{\mathbb{P}}
\newcommand{\bz}{\mathbb{Z}}
\newcommand{\bc}{\mathbb{C}}
\newcommand{\km}{\mathfrak{M}}
\newcommand{\ku}{\mathfrak{U}}
\newcommand{\ks}{\mathfrak{S}}
\newcommand{\ke}{\mathfrak{E}}
\newcommand{\kv}{\mathfrak{V}}
\begin{document}
\fontsize{12pt}{14pt} \textwidth=14cm \textheight=21 cm
\numberwithin{equation}{section}
\title{Strange duality on $\mathbb{P}^2$ via quiver representations.}
\author{Yao Yuan}
\address{Yau Mathematical Sciences Center, Tsinghua University, 100084, Beijing, P. R. China}
\email{yyuan@mail.tsinghua.edu.cn; yyuan@math.tsinghua.edu.cn}
\subjclass[2010]{Primary 14D05}

\begin{abstract} We study Le Potier's strange duality conjecture on $\mathbb{P}^2$.  We focus on the strange duality map $SD_{c_n^r,d}$ which involves the moduli space of rank $r$ sheaves with trivial first Chern class and second Chern class $n$, and the moduli space of 1-dimensional sheaves with determinant $\mathcal{O}_{\mathbb{P}^2}(d)$ and Euler characteristic 0.  By using tools in quiver representation theory, 
we show that $SD_{c^r_n,d}$ is an isomorphisms for $r=n$ or $r=n-1$ or $d\leq 3$, and in general $SD_{c^r_n,d}$ is injective for any $n\geq r>0$ and $d>0$.   

~~~

\textbf{Keywords:} Moduli spaces of semistable sheaves, projective plan, strange duality, quiver representation.
\end{abstract}

\maketitle
\tableofcontents
%%%%%%%%%%%%%%%%%%%%%%%%%%%%%%%%%%%%%%%%%%%%%%%%%%%%%%%%%%%%%%%%%%%%%%%%%%%%%%%%
%%%%%%%%%%%%%%%%         Section 1     introduction    %%%%%%%%%%%%%%%%%%%
\section{Introduction.}
Let $X$ be a smooth projective variety over $\bc$.  Let $K(X)$ be the Grothendieck group of coherent sheaves over $X$.  Define a quadratic form $(u,c)\mapsto \chi(u\otimes c)$ on $K(X)$, where $\chi(-)$ is the holomorphic Euler characteristic and $\chi(u\otimes c)=\displaystyle{\sum_{i\geq 0}}(-1)^i\chi(\Tor^i(\mf,\mg))$ for any $\mf$ of class $u$ and $\mg$ of class $c$.  

Fix an ample divisor $H$ on $X$.  Let $c,u\in K(X)$ be orthogonal to each other with respect to $\chi(-\otimes-)$.  Let $M_X^H(c)$ and $M_X^H(u)$ be the moduli spaces of $H$-semistable sheaves of classes $c$ and $u$ respectively.  If there are no strictly semistable sheaves of classes $c$ ($u$, resp.), then over $M_X^H(c)$ ($M_X^H(u)$, resp.) there is a well-defined line bundle $\lambda_c(u)$ ($\lambda_u(c)$, resp.) called \emph{determinant line bundle associated to $u$ ($c$, resp.)}.  If there are strictly semistable sheaves of class $u$, one needs more conditions on $c$ to get $\lambda_u(c)$ well-defined (see Ch 8 in \cite{HL}).

Assume both $M^X_H(c)$ and $M^X_H(u)$ are non-empty and both $\lambda_c(u)$ and $\lambda_u(c)$ are well-defined over $M^X_H(c)$ and $M^X_H(u)$, respectively.  
The following subset of $M_X^H(c)\times M^H_X(u)$
\begin{equation}\label{introsddiv}\mathtt{D}_{c,u}:=\big\{(\mg,\mf)\in   M_X^H(c)\times M_X^H(u) \bigm| \bigoplus_i H^i(X,\mg\otimes \mf)\ne 0\big\}.\end{equation}
is a priori not always of codimension 1.  
According to \cite{LPst} (see \cite{LPst} p.9) or \S2 in \cite{Da2}, $\mathtt{D}_{c,u}$ is a divisor of $\lambda_c(u)\boxtimes\lambda_u(c)$ if the following $(\bigstar)$ is satisfied.

($\bigstar $) \emph{For all $H$-semistable sheaves $\mf$ of class $c$ and $H$-semistable sheaves $\mg$ of class $u$ on $X$, $\Tor^i(\mf,\mg)=0$, $\forall~i\ge 1$; and $H^j(X,\mf\otimes \mg)=0$, $\forall~j\geq 2$.}

Then up to scalars $\mathtt{D}_{c,u}$ induces a unique section $\sigma_{c,u}$ and a canonical map
\begin{equation}\label{inmap} SD_{c,u}:H^0(M_X^H(c),\lambda_c(u))^{\vee}\ra H^0(M_X^H(u),\lambda_u(c)).\end{equation}
Le Potier's strange duality conjecture asserts that $SD_{c,u}$ is an isomorphism.

In this paper, we let $X=\p^2$, let $c=c_n^r$ be the class of rank $r$ sheaves with trivial first Chern class and second Chern class $n$, and let $u=u_d$ be the class of 1-dimensional sheaves with determinant $\mo_{\p^2}(d)$ and Euler characteristic 0.   Then we have the strange duality map as follows.
\begin{equation}\label{sinmap} SD_{c_n^r,d}:=SD_{c_n^r,u_d}:H^0(M^{H}_{\p^2}(c_n^r),\lambda_{c_n^r}(u_d))^{\vee}\ra H^0(M^H_{\p^2}(u_d),\lambda_{u_d}(c_n^r)).\end{equation}

We prove the following two main theorems.
\begin{thm}[Theorem \ref{mainthm1}]\label{introthm1}The strange duality map $SD_{c_n^r,d}$ in (\ref{sinmap}) is an isomorphism for $r=n>0$ and $d>0$.
\end{thm}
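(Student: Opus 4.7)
The plan is to combine the general injectivity of $SD_{c^r_n,d}$ (to be established separately in this paper) with a dimension comparison specific to the symmetric case $r = n$, carried out via quiver representation theory and semi-invariants.

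The starting point is that every semistable sheaf $\mf$ of class $c^r_r$ on $\p^2$ arises as the cohomology of a Beilinson monad
\[
\mo_{\p^2}(-1)^{r} \longrightarrow \mo_{\p^2}^{3r} \longrightarrow \mo_{\p^2}(1)^{r},
\]
so $\wrr$ can be identified with an open subset of a moduli of representations of the Beilinson quiver of $\p^2$ (three vertices with three arrows between consecutive pairs) at dimension vector $(r,3r,r)$, subject to the monad relations.  Sheaf semistability should correspond to King stability with respect to an appropriate character of the gauge group $GL(r)\times GL(3r)\times GL(r)$.  Analogously, $\mdh$ admits a quiver-theoretic presentation via a Beilinson-type resolution of a generic 1-dimensional sheaf of class $u_d$, giving another moduli of quiver representations at a dimension vector determined by $d$.

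The next step is to interpret the determinant line bundles $\lambda_{c^r_r}(u_d)$ on $\wrr$ and $\lambda_{u_d}(c^r_r)$ on $\mdh$ as descents of characters of the respective gauge groups, and to apply the Derksen--Weyman theorem on semi-invariants of quiver representations to identify both section spaces
\[
H^0(\wrr,\lambda_{c^r_r}(u_d)) \quad\text{and}\quad H^0(\mdh,\lambda_{u_d}(c^r_r))
\]
with explicit spaces of Schofield semi-invariants.  One must then verify that the geometric divisor $\mathtt{D}_{c^r_r,u_d}$, and hence the pairing defining $SD_{c^r_r,d}$, matches the natural pairing between these semi-invariant spaces coming from the compatibility of the two monad descriptions.

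Combined with injectivity, it then suffices to show that the two dimensions agree.  This is the principal obstacle: explicit enumeration of Schofield semi-invariants for the dimension vectors arising here is delicate.  The cleanest strategy is likely to exhibit a manifestly non-degenerate bilinear pairing on the quiver side which realises $SD_{c^r_r,d}$ directly, thereby proving the isomorphism without computing either dimension in closed form.  A less elegant alternative is to construct the inverse of $SD_{c^r_r,d}$ via universal families of monads on both moduli, but this requires a careful simultaneous bookkeeping of the two monad presentations.
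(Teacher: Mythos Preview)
Your proposal has two genuine gaps that make it unworkable as stated.

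First, the logical structure is circular: you plan to invoke the general injectivity of $SD_{c^r_n,d}$ (Theorem~\ref{introthm2}) to reduce to a dimension count, but in this paper the injectivity is proved \emph{using} the $r=n$ isomorphism (via the embedding $\delta_{n-r}:M(r,0,n)\to\ts_n^{n-r}\subset M(n,0,n)$ and the commutative diagram (\ref{cdmain3})). So you cannot assume injectivity here.

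Second, and more seriously, the Beilinson monad description puts $\wrr$ inside representations of the three-vertex Beilinson quiver \emph{with relations}, and Derksen--Weyman's theorem on semi-invariants is stated and proved for quivers \emph{without} relations. There are extensions to quivers with relations, but they do not give the clean spanning/dimension statement you need. The paper avoids this entirely by using Dr\'ezet's height-zero theory: since $M(r,0,r)$ has height zero, every $\mg\in M(r,0,r)$ has a two-term resolution $0\to\mo(-2)^r\to\mo(-1)^{2r}\to\mg\to0$, which identifies $M(r,0,r)$ isomorphically with $M(Q,(r,2r))$ for the \emph{two-vertex, relation-free} Kronecker quiver $Q$ with three arrows. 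Similarly, a generic $\mf\in\mdh$ (one with $H^0(\mf)=0$) sits in $0\to\mo(-2)^d\to\mo(-1)^d\to\mf\to0$, giving only a \emph{birational} map $\Psi:\mdh\dashrightarrow M(Q,(d,d))$. Now Derksen--Weyman applies cleanly, and the quiver-side pairing $SD(Q)$ is an isomorphism by Theorem~\ref{mainquiver}. The remaining work---and it is substantial---is to show that these (only birational) identifications carry the sheaf-theoretic section $\sigma_{c^r_r,d}$ to the quiver section $\bar c$ and induce isomorphisms on global sections of the determinant line bundles. The paper does this by introducing a variant pairing $VD$ built from $\mathscr{H}om$ rather than $\otimes$, and by using the Fourier transform $\Phi:\mdh\dashrightarrow M(d,0,d)$ together with codimension-two complement arguments on normal varieties (Proposition~\ref{VDtoSD}, Lemma~\ref{pfmain3}). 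Your ``cleanest strategy'' of exhibiting a non-degenerate pairing on the quiver side is indeed the right idea, but the correct quiver is the two-vertex one, and bridging birationality to an honest isomorphism of section spaces is where the real content lies.
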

\begin{thm}[Theorem \ref{mainthm2}]\label{introthm2}The strange duality map $SD_{c_n^r,d}$ in (\ref{sinmap}) is injective for all $n\geq r>0$ and $d>0$.
\end{thm}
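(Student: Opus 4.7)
Viewing $\sigma_{c_n^r,u_d}$ as an element of
$H^0(M^H_{\p^2}(c_n^r), \lambda_{c_n^r}(u_d)) \otimes H^0(M^H_{\p^2}(u_d), \lambda_{u_d}(c_n^r))$
via the K\"unneth formula, injectivity of $SD_{c_n^r,d}$ is equivalent to saying that the \emph{theta sections} $\sigma_{c_n^r,u_d}(-,\mg)$, as $\mg$ ranges over $M^H_{\p^2}(u_d)$, span $H^0(M^H_{\p^2}(c_n^r), \lambda_{c_n^r}(u_d))$. So the goal is to show that the jumping divisors $\mathtt{D}_{c_n^r, \mg}$ cut out the complete linear system $|\lambda_{c_n^r}(u_d)|$ on $M^H_{\p^2}(c_n^r)$.

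I would attack this through the quiver-representation framework developed earlier in the paper. Every $H$-semistable sheaf of class $c_n^r$ admits a Beilinson-type monad with terms among $\mo(-1), \mo, \mo(1)$, yielding a natural morphism $\pi: \kwrn \to M^H_{\p^2}(c_n^r)$ from a moduli of quiver representations whose image is essentially all of $M^H_{\p^2}(c_n^r)$. The line bundle $\lambda_{c_n^r}(u_d)$ pulls back to a character line bundle on $\kwrn$, and its global sections identify, via Derksen--Weyman/Schofield theory, with a weight space of semi-invariants on the representation space of the Beilinson quiver. Under this dictionary each pulled-back theta section $\pi^{*}\sigma_{c_n^r,u_d}(-,\mg)$ is explicitly a Schofield-type determinantal semi-invariant built from a minimal free resolution of $\mg$ on $\p^2$. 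Injectivity of $SD_{c_n^r,d}$ then reduces to showing that these determinantal semi-invariants span the corresponding weight space.

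To establish the spanning, I would induct on $d$ (and within each $d$ on the defect $n-r$), with the companion isomorphism results---Theorem~\ref{introthm1} for $r=n$, together with the analogous statements for $r=n-1$ and $d\leq 3$---serving as the base cases. The inductive step uses the additivity
$\mathtt{D}_{c_n^r, \mg_1 \oplus \mg_2} = \mathtt{D}_{c_n^r,\mg_1} + \mathtt{D}_{c_n^r,\mg_2}$
for polystable $\mg = \mg_1 \oplus \mg_2$ with $\deg\mg_i < d$: the resulting theta sections factor as products of lower-degree theta sections, which by the hypothesis already span known subspaces, while the remaining directions must be supplied by theta sections attached to indecomposable $\mg \in M^H_{\p^2}(u_d)$. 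The main obstacle I expect is the combinatorial heart of the argument---proving that Schofield's determinantal semi-invariants arising from $1$-dimensional sheaves truly exhaust the weight space of semi-invariants of the Beilinson quiver at dimension vector $(r,0,n)$. This will require a careful compatibility between the Beilinson monad of $\mf \in M^H_{\p^2}(c_n^r)$ and the chosen resolution of $\mg$, together with a dimension count ensuring that the specializations so produced are not trapped in any proper subspace of semi-invariants.
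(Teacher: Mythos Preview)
Your reformulation of injectivity as the spanning property of theta sections $\sigma_{c_n^r,u_d}(-,\mg)$ is correct, and the instinct to lean on Derksen--Weyman is well-placed. But the proposed route via a three-term Beilinson monad and induction on $d$ has real gaps, and the paper proceeds quite differently.

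First, a circularity: you list the $d\leq 3$ isomorphism as a base case, but in the paper that statement (Corollary~\ref{introcoro2}) is \emph{deduced from} the very injectivity theorem you are trying to prove, combined with a separate dimension count from \cite{Yuan7}. It cannot serve as input.

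Second, and more seriously, your inductive step is only a hope. Additivity under $\mg=\mg_1\oplus\mg_2$ shows that products of lower-degree theta sections lie in the span, but it gives no mechanism for producing the remaining directions from indecomposable $\mg$. You acknowledge this yourself. Derksen--Weyman does say that Schofield determinantal semi-invariants $c_W$ span each weight space, but the $W$'s allowed range over \emph{all} representations of the Beilinson quiver with the right dimension vector, and there is no a~priori reason these all come from semistable one-dimensional sheaves of class $u_d$ under your correspondence. Closing that gap is essentially the whole problem.

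The paper avoids this entirely by never inducting on $d$. Instead it reduces the case of arbitrary $r\leq n$ to the height-zero case $r=n$, where Theorem~\ref{introthm1} already gives an isomorphism. Concretely: there is a closed subscheme $\ts_n^{n-r}\subset M(n,0,n)$ consisting of sheaves $\mg_n^n$ with $\mathrm{hom}(\mg_n^n,\mo_{\p^2})\geq n-r$, and a birational morphism $\delta_{n-r}:M(r,0,n)\to\ts_n^{n-r}$ sending a $\mu$-semistable $\mg_n^r$ to the unique non-split extension $0\to\mg_n^r\to\mg_n^n\to\mo_{\p^2}^{\oplus(n-r)}\to 0$. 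One then shows that $\delta_{n-r}^*\lambda_{c_n^n}(d)\cong\lambda_{c_n^r}(d)$ with $\delta_{n-r}^*$ an isomorphism on sections, and that the restriction $\gamma_n^{n-r}:H^0(M(n,0,n),\lambda_{c_n^n}(d))\to H^0(\ts_n^{n-r},\lambda_{c_n^n}(d))$ is surjective (this is where the quiver description and reductivity of $GL(Q,(n,2n))$ are used). A commutative diagram then identifies $\jmath_n^r\circ SD_{c_n^r,d}$ with $SD_{c_n^n,d}\circ(\gamma_n^{n-r})^{\vee}\circ(\delta_{n-r}^*)^{\vee}$, where $\jmath_n^r$ is multiplication by $\theta_d^{\,n-r}$. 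The right-hand side is a composition of injections and the isomorphism $SD_{c_n^n,d}$, so $SD_{c_n^r,d}$ is injective. Derksen--Weyman is invoked only once, inside the proof of Theorem~\ref{introthm1}, for the two-vertex quiver $Q$; the general $(r,n)$ case is handled by this sheaf-theoretic reduction rather than by working with a larger quiver.
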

Together with Proposition 4.1 and Theorem 4.16 (1) in \cite{Yuan7}, Theorem \ref{introthm1} imply the following corollary directly.
\begin{coro}[Corollary \ref{maincoro1}]\label{introcoro1}The strange duality map $SD_{c_n^r,d}$ in (\ref{sinmap}) is an isomorphism for $r>0$, $n=r+1$ and $d>0$.
\end{coro}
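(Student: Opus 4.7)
The strategy is to bootstrap from the $r=n$ case of Theorem~\ref{introthm1} to the $n=r+1$ case by transporting the strange duality isomorphism along identifications of the two sides provided by the author's earlier paper \cite{Yuan7}. My plan is to read Proposition 4.1 of \cite{Yuan7} as producing a canonical isomorphism between $H^{0}(M^{H}_{\p^2}(c_{r+1}^r),\lambda_{c_{r+1}^r}(u_d))$ and $H^{0}(M^{H}_{\p^2}(c_{r}^r),\lambda_{c_{r}^r}(u_d))$ (for instance via a Hecke-type correspondence, a tensor-and-dualize operation on rank-$r$ sheaves, or a comparison of global sections of determinant line bundles through a birational modification of the sheaf-theoretic moduli), and Theorem 4.16(1) of \cite{Yuan7} as providing the parallel identification
$$H^{0}(M^{H}_{\p^2}(u_d),\lambda_{u_d}(c_{r+1}^r)) \;\cong\; H^{0}(M^{H}_{\p^2}(u_d),\lambda_{u_d}(c_{r}^r))$$
on the one-dimensional side, together with the compatibility of both identifications with the strange duality section $\sigma_{c,u}$ cut out by the divisor $\mathtt{D}_{c,u}$ in (\ref{introsddiv}).

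With these two identifications in hand, the proof becomes a one-line diagram chase: the strange duality maps $SD_{c_{r+1}^r,d}$ and $SD_{c_{r}^r,d}$ fit into a commutative square whose vertical arrows are isomorphisms, so since $SD_{c_{r}^r,d}$ is an isomorphism by Theorem~\ref{introthm1} the map $SD_{c_{r+1}^r,d}$ is as well. Throughout, one must check that the hypothesis $(\bigstar)$ is verified on both sides so that $SD_{c_{r+1}^r,d}$ is even well-defined; for rank-$r$ torsion-free sheaves on $\p^2$ paired with $1$-dimensional semistable sheaves this is typically a short computation using that $\Tor^i$ of a locally free sheaf with anything vanishes in positive degree and that $\p^2$ has cohomological dimension $2$.

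The main obstacle I expect is verifying that the isomorphisms supplied by \cite{Yuan7} genuinely intertwine the divisors $\mathtt{D}_{c_{r+1}^r,u_d}$ and $\mathtt{D}_{c_{r}^r,u_d}$; without that compatibility one only obtains an equality of dimensions $h^{0}(\lambda_{c_{r+1}^r}(u_d)) = h^{0}(\lambda_{u_d}(c_{r+1}^r))$. In that fallback scenario the argument would instead combine the dimension equality with the injectivity statement of Theorem~\ref{introthm2}, applied with $n=r+1$, to upgrade injectivity to an isomorphism. A secondary concern is that whatever birational construction relates the rank-$r$ moduli $M^{H}_{\p^2}(c_{r}^r)$ and $M^{H}_{\p^2}(c_{r+1}^r)$ must preserve (up to pullback) the determinant line bundle $\lambda_{c_n^r}(u_d)$, which typically forces a careful choice of the moduli-theoretic construction rather than an arbitrary birational modification.
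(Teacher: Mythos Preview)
Your overall approach matches the paper's: the corollary is deduced by combining Theorem~\ref{introthm1} with Proposition~4.1 and Theorem~4.16(1) of \cite{Yuan7}, and the argument is indeed a commutative-square transport of the $r=n$ isomorphism. Your fallback (dimension equality plus Theorem~\ref{introthm2}) is also a valid route.

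One correction to your speculation about the mechanism: the comparison is not between the rank-$r$ moduli $M(c_{r+1}^r)$ and $M(c_r^r)$, but between $M(c_{r+1}^r)$ (rank $r$, $c_2=r+1$) and $M(c_{r+1}^{r+1})$ (rank $r+1$, $c_2=r+1$). The map $\delta$ of Proposition~3.1/4.1 in \cite{Yuan7} (generalized in this paper as $\delta_{n-r}$, Proposition~\ref{genmap}) sends a $\mu$-semistable rank-$r$ sheaf $\mg$ with $H^0(\mg)=0$ to the unique non-split extension $0\to\mg\to\mg'\to\mo_{\p^2}\to 0$, giving a birational morphism onto the divisor $\ts_{r+1}\subset M(r+1,0,r+1)$. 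This pulls $\lambda_{c_{r+1}^{r+1}}(d)$ back to $\lambda_{c_{r+1}^r}(d)$. On the one-dimensional side, the identification is not between $\lambda_d(c_{r+1}^r)$ and $\lambda_d(c_r^r)$ (these line bundles differ by $\pi^*\mo_{|dH|}(1)$, cf.\ Remark~\ref{atform}) but rather the injection $\jmath^r_{r+1}:H^0(\lambda_d(c_{r+1}^r))\hookrightarrow H^0(\lambda_d(c_{r+1}^{r+1}))$ given by multiplication by the theta section $\theta_d$. The content of Theorem~4.16(1) in \cite{Yuan7} is then what upgrades the resulting injectivity of $SD_{c_{r+1}^r,d}$ to an isomorphism in this codimension-one situation.
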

Together with Proposition 4.14 in \cite{Yuan7}, Theorem \ref{introthm2} imply the following corollary directly.
\begin{coro}[Corollary \ref{maincoro2}]\label{introcoro2}The strange duality map $SD_{c_n^r,d}$ in (\ref{sinmap}) is an isomorphism for $n\geq r>0$ and $d=1,2,3$.
\end{coro}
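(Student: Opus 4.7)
The plan is to deduce this corollary directly from Theorem \ref{introthm2} and Proposition 4.14 of \cite{Yuan7}, treating it as a short linear-algebra consequence rather than a new theorem. The setup is that $SD_{c_n^r,d}$ is a linear map between two finite-dimensional complex vector spaces of global sections of determinant line bundles on projective moduli spaces, so once injectivity and equality of dimensions are both known the map is automatically an isomorphism.

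First I would invoke Theorem \ref{introthm2} to conclude that, for every $n \geq r > 0$ and every $d > 0$, the map
\[ SD_{c_n^r,d}: H^0(M^{H}_{\p^2}(c_n^r),\lambda_{c_n^r}(u_d))^{\vee} \ra H^0(M^H_{\p^2}(u_d),\lambda_{u_d}(c_n^r)) \]
is injective. In particular injectivity is already in hand for $d \in \{1,2,3\}$. The remaining task is purely a dimension count: I would read off from Proposition 4.14 of \cite{Yuan7} the equality
\[ h^0\!\big(M^H_{\p^2}(c_n^r),\lambda_{c_n^r}(u_d)\big) = h^0\!\big(M^H_{\p^2}(u_d),\lambda_{u_d}(c_n^r)\big) \qquad (d=1,2,3, \ n\geq r>0), \]
after which an injective linear map between vector spaces of equal finite dimension must be an isomorphism, completing the corollary.

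The only step that carries any content is the dimension equality for $d \leq 3$, and that has been established in \cite{Yuan7}. I expect that result to exploit the fact that for $d \in \{1,2,3\}$ the moduli space $M^H_{\p^2}(u_d)$ of $1$-dimensional sheaves is classical and low-dimensional (for $d=1,2$ it degenerates substantially, and for $d=3$ it is still manageable), so that the Euler characteristic/theta function on that side can be computed explicitly and matched against the generalized theta series on the rank-$r$ side via the Fourier--Mukai or quiver tools developed in \emph{loc. cit.} Hence no further argument is required in the present paper beyond citing these two inputs and combining them as above.
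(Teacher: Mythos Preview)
Your proposal is correct and matches the paper's own argument exactly: the paper states that Corollary~\ref{introcoro2} follows directly from Theorem~\ref{introthm2} (injectivity of $SD_{c_n^r,d}$) together with Proposition~4.14 of \cite{Yuan7}, which supplies the needed dimension comparison for $d\le 3$. Your final paragraph's speculation about the precise mechanism behind Proposition~4.14 is peripheral and not needed for the proof here.
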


We prove Theorem \ref{introthm1} by using a famous result due to Derksen-Weyman in quiver representation theory (Theorem \ref{dewe}), which implies an analog of strange duality between moduli spaces of quiver representations (Theorem \ref{mainquiver}).  
%We at first relate our moduli spaces $M_{\p^2}^H(c_n^n)$ and $M_{\p^2}^H(u_d)$ to two moduli spaces of quiver representations, and then show the relation also connects determinant line bundles on both sides.  
We want to relate our map $SD_{c^r_n,d}$ to the analogous map $SD(Q)$ in quiver representation theory.
Theorem 2 in \cite{Dr1} already says that $M_{\p^2}^H(c_n^n)$ is isomorphic to some moduli spaces of representations of some quiver.  We show that $M_{\p^2}^H(u_d)$ is birational to some moduli spaces of representations of the same quiver as $M_{\p^2}^H(c_n^n)$.  Then we  prove that the birational equivalences of moduli spaces induce isomorphisms the global section spaces of determinant line bundles.

After Theorem \ref{introthm1} is proved, we prove Theorem \ref{introthm2} by generalizing the method in \cite{Yuan7}. 

In the main part of the paper, we usually write $M(r,0,n)$ ($M(dH,0)$, $\lambda_{d}(c_n^r)$, $\lambda_{c_n^r}(d)$, resp.) instead of $M^H_{\p^2}(c_n^r)$ ($M^H_{\p^2}(u_d)$, $\lambda_{u_d}(c_n^r)$, $\lambda_{c_n^r}(u_d))$, resp.). 
 
The structure of the paper is arranged as follows.  In \S2 we give some background materials on quiver representations and tilting theory.  In \S3 we list the notations we will use in next sections.  In \S4 we mainly recall some results of Dr\'ezet in \cite{Dr1} which provides an isomorphism between $M_{\p^2}^H(c_n^n)$ to a moduli space $M(Q,(n,2n))$ of representations with dimension vector $(n,2n)$ of some quiver $Q$.  In \S5
we build the birational equivalence between $M^H_{\p^2}(u_d)$ with $M(Q,(d,d))$, i.e. a moduli space of representations with dimension vector $(d,d)$ of  quiver $Q$.  Finally in \S6 we prove the main theorems.

Strange duality was at first conjectured for curves by Beaville (\cite{Bea}) and Dongai-Tu (\cite{DT}) in 1990s, and it has been proved true for ten years (\cite{Bel1},\cite{Bel2},\cite{MO1}).  For smooth projective variety of higher dimension, the conjecture in general can not be formulated.  But for surfaces, besides Le Potier's formulation for rational surfaces which has been studied for instance in \cite{Abe}, \cite{Abe2}, \cite{Da2}, \cite{GY}, \cite{Yuan1}, \cite{Yuan5}, \cite{Yuan6} and \cite{Yuan7} before, there is also a formulation due to Marian-Oprea (see \cite{MO2}) for K3 and abelian surfaces, in which a lot of results has obtained by the Marian-Oprea team (\cite{BMOY}, \cite{MO3}, \cite{MO4}, \cite{MO5}). 
%%%%%%%%%%%%%%%%         Section 2    Preliminaries on quiver representations     %%%%%%%%%%%%%%%%%%%
\section{Preliminaries.} %on quiver representations and tilting theory.}

\subsection{Semi-invariants of quivers}\label{semi}%%%%%%.    Section 2.1.  Semi-invariants of quivers.   %%%%
Let $\Bbbk$ be the base field which is algebraically closed.  A quiver $Q$ is a pair $Q=(Q_0,Q_1)$ consisting of the set of vertices $Q_0$ and the set of arrows $Q_1$.  Denote by $ta$ and $ha$ the tail and the head respectively of each arrow $a\in Q_1$.  A \emph{representation} $V$ of $Q$ is a family of finite dimensional $\Bbbk$-vector spaces $\{V(x)|x\in Q_0\}$ and of $\Bbbk$-linear maps $V(a):V(ta)\ra V(ha)$.  Denote by $\Gamma$ the space of integer-valued functions on $Q_0$.  The dimension vector $\underline{d}(V)$ of a representation $V$ is defined by $\underline{d}(V)(x)=\dim_{\Bbbk} V(x)$.  Then $\underline{d}(V)\in\Gamma$.  The Euler product on $\Gamma$ is defined as follows.
\begin{equation}\label{debifm}\<\alpha,\beta\>=\sum_{x\in Q_0}\alpha(x)\beta(x)-\sum_{a\in Q_1}\alpha(ta)\beta(ha).\end{equation}
Notice that $\<-,-\>$ is not symmetric.

We assume $Q$ has no oriented cycles.  Let $\Rep(Q)$ be the set of all representations of $Q$, and $\Rep(Q,\alpha)$ of those with dimension vector $\alpha$. 
Then
$$\Rep(Q,\alpha)\cong\bigoplus_{a\in Q_1}\Hom(\Bbbk^{\alpha(ta)},\Bbbk^{\alpha(ha)}).$$
The two groups 
$$GL(Q,\alpha):=\prod_{x\in Q_0}GL(\alpha(x),\Bbbk)\supset SL(Q,\alpha):=\prod_{x\in Q_0}SL(\alpha(x),\Bbbk)$$
act on $\Rep(Q,\alpha)$ such that $\forall \displaystyle{\prod_{x\in Q_0}}g(x)\in GL(Q,\alpha),~V\in\Rep(Q,\alpha)$,
$$(\displaystyle{\prod_{x\in Q_0}}g(x))\circ V=\{V(x),x\in Q_0;~g(ha)\circ V(a)\circ g(ta)^{-1}:V(ta)\ra V(ha),a\in Q_1\}.$$ 
The above actions induce actions of $GL(Q,\alpha)$ and $SL(Q,\alpha)$ on the ring $\Bbbk[\Rep(Q,\alpha)]$ of regular functions on $\Rep(Q,\alpha)$.  The ring of \emph{semi-invariants} $\SI(Q,\alpha):=\Bbbk[\Rep(Q,\alpha)]^{SL(Q,\alpha)}$ has a weight space decomposition
\[\SI(Q,\alpha)=\bigoplus_{\sigma\in\Gamma^{*}}\SI(Q,\alpha)_{\sigma},\]
where $\Gamma^{*}:=\Hom(\Gamma,\mathbb{Z})$ and
$$\SI(Q,\alpha)_{\sigma}:=\{f\in k[\Rep(Q,\alpha)]\big| (\displaystyle{\prod_{x\in Q_0}}g(x))(f)=\displaystyle{\prod_{x\in Q_0}}det(g(x))^{\sigma(e_x)}\cdot f\}$$
with $\Gamma\ni e_x(y)=\left\{\begin{array}{l}1\quad\text{   if }x=y,\\0\quad\text{   otherwise.} \end{array}\right.$.

For any $V,W\in \Rep(Q)$ we have the following exact sequence 
\begin{equation}\label{qext1}\xymatrix@C=0.5cm{0\ar[r]&\Hom_Q(V,W)\ar[r]^{\imath~~~~\qquad}&\bigoplus_{x\in Q_0}\Hom(V(x),W(x))\\ &\ar[r]^{d^V_W~~~\qquad\qquad}&\bigoplus_{a\in Q_1}\Hom(V(ta),W(ha))\ar[r]^{\qquad\qquad p}&\Ext_{Q}(V,W)\ar[r]&0.}
\end{equation} 
The map $d^V_W$ is given by 
$$\{f(x)\}_{x\in Q_0}\mapsto\{f(ha)V(a)-W(a)f(ta)\}_{a\in Q_1}.$$ 
If $\<\underline{d}(V),\underline{d}(W)\>=0$, then the map $d^V_W$ in (\ref{qext1}) will be a square matrix.  
Let $\alpha,\beta$ be two dimension vectors such that $\<\alpha,\beta\>=0$.  
We then can define a function $c$ on $\Rep(Q,\alpha)\times \Rep(Q,\beta)$ such that $c(V,W)=\det(d^V_W)$ for every $V\in \Rep(Q,\alpha)$ and $W\in \Rep(Q,\beta)$.  For any fixed $V\in \Rep(Q,\alpha)$, the restriction of $c$ to $\{V\}\times\Rep(Q,\beta)$ defines an element $c^{V}\in\SI(Q,\beta)$ with weight $\<\alpha,-\>$.  Also for any fixed $W\in\Rep(Q,\beta)$ we have $c_W\in\SI(Q,\alpha)_{-\<-,\beta\>}$ defined in analogous way.  By the result in \cite{DW} (Theorem 1 and Corollary 1) we have
\begin{thm}[Derksen-Weyman]\label{dewe}$\SI(Q,\alpha)$ is a $\Bbbk$-linear span of semi-invariants $c_W$ with $\<\alpha,\underline{d}(W)\>=0$ and the analogous result is true for the semi-invariants $c^V$.  In particular $\dim_k\SI(Q,\alpha)_{-\<-.\beta\>}=\dim_k\SI(Q,\beta)_{\<\alpha,-\>}$ for all dimension vectors $\alpha,\beta$ such that $\<\alpha,\beta\>=0$.
\end{thm}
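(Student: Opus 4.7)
The plan splits along the two assertions. First I would derive the ``in particular'' dimension equality as a formal consequence of the spanning statement. Granting the spanning, $\SI(Q,\alpha)_{-\<-,\beta\>}$ is generated by the $c_W$ with $\underline{d}(W)=\beta$ as $W$ ranges over $\Rep(Q,\beta)$, and symmetrically $\SI(Q,\beta)_{\<\alpha,-\>}$ by the $c^V$ with $V\in\Rep(Q,\alpha)$. The single bilinear function $c(V,W)=\det(d^V_W)$ realizes both families simultaneously, and the two spanning maps $W\mapsto c_W$ and $V\mapsto c^V$ are represented by transposed matrices, so their ranks---and hence the dimensions in question---agree.

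For the spanning itself, I would first verify that $c_W\in\SI(Q,\alpha)_{-\<-,\underline{d}(W)\>}$ by a direct calculation on the block structure of $d^V_W$ from (\ref{qext1}): holding $W$ fixed and acting by $g=\prod_{x}g(x)\in GL(Q,\alpha)$, the column-block of $d^V_W$ indexed by vertex $x$ rescales so that $\det(d^V_W)$ picks up exactly $\prod_x\det(g(x))^{-\<e_x,\underline{d}(W)\>}$; the check for $c^V$ is symmetric. Then I would compute $\dim\SI(Q,\alpha)_\sigma$ combinatorially by decomposing $\Bbbk[\Rep(Q,\alpha)]$ as a $GL(Q,\alpha)$-module via Cauchy's formula applied to each summand $\Hom(\Bbbk^{\alpha(ta)},\Bbbk^{\alpha(ha)})$, expressing the weight space as a sum of Littlewood--Richardson-type multiplicities. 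Finally I would attempt to produce enough linearly independent Schofield semi-invariants $c_W$ to match this count.

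The main obstacle is this last matching. The inclusion $\operatorname{span}\{c_W\mid W\in\Rep(Q,\beta)\}\subseteq\SI(Q,\alpha)_{-\<-,\beta\>}$ is immediate from the weight calculation; promoting it to equality requires genuine quiver input, not just multilinear algebra. The cleanest route is the original Derksen--Weyman argument, which proceeds inductively either via Schofield's theory of general representations and exceptional sequences---using vanishing of $\Ext^1$ between general subrepresentations to cut the weight space dimension by dimension---or via reflection functors that reduce to a small list of base cases. Both approaches ultimately depend on a nontrivial result of Schofield on generic $\Hom$ and $\Ext$ between representations, which is where the real work lies.
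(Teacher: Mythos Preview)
The paper does not prove this theorem at all: it is quoted verbatim from Derksen--Weyman \cite{DW} (Theorem~1 and Corollary~1) as background, with no argument supplied. So there is no ``paper's own proof'' to compare your proposal against; you are sketching strictly more than the paper does.

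That said, your sketch is a fair outline of the actual Derksen--Weyman argument, and your derivation of the dimension equality from the spanning statement is correct. It is worth noting that this last step---realizing both families $\{c_W\}$ and $\{c^V\}$ as slices of a single function $c(V,W)$ and invoking the equality of row rank and column rank---is exactly the piece of the argument the paper \emph{does} spell out, namely in the proof of Theorem~\ref{mainquiver}. One small point you leave implicit: to pass from ``$\SI(Q,\alpha)$ is spanned by all $c_W$'' to ``$\SI(Q,\alpha)_{-\langle-,\beta\rangle}$ is spanned by $c_W$ with $\underline{d}(W)=\beta$'' you need that the weight of $c_W$ determines $\underline{d}(W)$, i.e.\ that $\gamma\mapsto -\langle-,\gamma\rangle$ is injective; this holds because the Euler form of an acyclic quiver is unimodular. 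Your identification of the spanning statement as the genuine content, resting on Schofield's results on general representations, is accurate, and you are right that this is where the real work lies---which is precisely why the paper treats it as a black box.
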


\subsection{Stability of quivers.}%%%%%%%.   Section 2.2.  Stability of quivers.   %%%%%%%
Fix a weight $\sigma\in\Gamma^*$ and a dimension vector $\alpha$ such that $\sigma(\alpha)=0$.  Let $V\in\Rep(Q,\alpha)$.  A \emph{subrepresentation} $U\subset V$ of $V$ consists of $\Bbbk$-vector subspaces $\{U(x)\subset V(x)|x\in Q_0\}$ such that $V(a)(U(ta))\subset U(ha)$ for all $a\in Q_1$, and of $\Bbbk$-linear maps $U(a):=V(a)|_{U(ta)}:U(ta)\ra U(ha)$.  We say a representation $V\in\Rep(Q,\alpha)$ is \emph{(semi)stable} with respect to weight $\sigma$ if $\forall~U\subset V$, $\sigma(\underline{d}(U))<(\leq)0$. 

Let $\Rep(Q,\alpha)_{\sigma}^{s}$ ($\Rep(Q,\alpha)_{\sigma}^{ss}$) be the subspace of $\Rep(Q,\alpha)$ of (semi)stable representations with respect to weight $\sigma$.  By \S3 in \cite{King1}, we have the following thoerem.
\begin{thm}[King]\label{GITK}$\Rep(Q,\alpha)^{ss}_{\sigma}$ is an open subvariety in $\Rep(Q,\alpha)$ and it admits a categorical (GIT) quotient $M(Q,\alpha)_{\sigma}:=\Rep(Q,\alpha)_{\sigma}^{ss}\slash\slash GL(Q,\alpha)$ which is a projective variety.  The quotient $M(Q,\alpha)_{\sigma}$ contains a smooth open subvariety $M(Q,\alpha)_{\sigma}^s$ which is a geometric quotient $\Rep(Q,\alpha)_{\sigma}^{s}//GL(Q,\alpha)$.  \end{thm}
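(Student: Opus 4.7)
The plan is to follow King's GIT construction from \cite{King1}. The essential observation is that although $GL(Q,\alpha)$ acts on $\Rep(Q,\alpha)$, the diagonal copy of $\mathbb{G}_m$ of simultaneous scalars lies in the kernel of the action, so the effective group acting is $PGL(Q,\alpha) := GL(Q,\alpha)/\mathbb{G}_m$. The character $\chi_\sigma: GL(Q,\alpha) \to \mathbb{G}_m$ given by $g \mapsto \prod_{x \in Q_0} \det(g(x))^{\sigma(e_x)}$ is trivial on these diagonal scalars precisely because $\sigma(\alpha) = 0$, so it descends to a character of $PGL(Q,\alpha)$. First I would use $\chi_\sigma$ to linearize the trivial line bundle on $\Rep(Q,\alpha)$; by construction, the space of invariant sections of its $m$-th power is exactly $\SI(Q,\alpha)_{m\sigma}$.

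Second, I would apply Mumford's numerical criterion to characterize $\chi_\sigma$-semistability. A one-parameter subgroup $\lambda:\mathbb{G}_m \to GL(Q,\alpha)$ induces, after conjugation, a weight decomposition $V(x) = \bigoplus_{n \in \bz} V(x)_n$ on each vertex space, and the condition that $\lim_{t \to 0} \lambda(t)\cdot V$ exists in $\Rep(Q,\alpha)$ translates precisely to the subspaces $F^n V(x) := \bigoplus_{m \geq n} V(x)_m$ forming a descending filtration of $V$ by subrepresentations. The Mumford weight of $V$ with respect to $\chi_\sigma \circ \lambda$ is then a non-negative integer combination of the numbers $\sigma(\underline{d}(F^n V))$. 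Consequently this weight being $\geq 0$ for every such $\lambda$ is equivalent to $\sigma(\underline{d}(U)) \leq 0$ for every subrepresentation $U \subset V$, matching the combinatorial definition of $\sigma$-semistability; an analogous argument handles the strict inequality for stability.

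Third, because $Q$ has no oriented cycles, every $GL(Q,\alpha)$-invariant regular function on $\Rep(Q,\alpha)$ is constant, so the affine categorical quotient is a single point. Mumford's construction then identifies $M(Q,\alpha)_\sigma$ with $\Proj \bigoplus_{m \geq 0} \SI(Q,\alpha)_{m\sigma}$, which is projective over $\Bbbk$. On the open locus $\Rep(Q,\alpha)_\sigma^s$, Schur's lemma forces the stabilizer in $GL(Q,\alpha)$ of every representation to be exactly the diagonal scalars, so $PGL(Q,\alpha)$ acts freely there. Hence $M(Q,\alpha)_\sigma^s$ exists as a geometric quotient and is smooth, being the quotient of an open subset of the smooth affine space $\Rep(Q,\alpha)$ by a free action of a smooth group.

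The main obstacle, and the most delicate technical step, is the Hilbert--Mumford translation in the second step: the numerical condition on one-parameter subgroups looks a priori unrelated to the combinatorial condition on subrepresentations, and making the equivalence precise requires the correspondence between limits of $1$-parameter subgroups and filtrations by subrepresentations together with a careful bookkeeping of the Mumford weight. All remaining steps are then standard applications of GIT, as carried out in \cite{King1}.
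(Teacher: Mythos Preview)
Your proposal is correct and faithfully outlines King's original GIT argument from \cite{King1}. The paper itself does not give a proof of this statement at all: it is quoted as background and attributed directly to \cite{King1}, so your sketch actually supplies more detail than the paper does.
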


Hence we know that $M(Q,\alpha)_{\sigma}\cong\Proj(\oplus_{n\in\mathbb{N}}\SI(Q,\alpha)_{n\sigma})$.  
\begin{rem}\label{irre1}$M(Q,\alpha)_{\sigma}$ is irreducible because so is $\Rep(Q,\alpha)^{ss}_{\sigma}$ as an open subvariety of $\Rep(Q,\alpha)\cong\Bbbk^{\displaystyle{\Sigma_{a\in Q_1}}\alpha(ta)\alpha(ha)}$
\end{rem}

\subsection{Strange duality on quiver representations.}\label{sdqr} %%%%%%.  Section 2.3. Strange duality on quiver representations. %%%%%
Let $\alpha,\beta\in \Gamma$ be two dimension vectors such that $\<\alpha,\beta\>=0$.  We have defined the function $c$ on $\Rep(Q,\alpha)\times\Rep(Q,\beta)$ in $\S$ \ref{semi}.  The restriction function $c_W$ ($c^V$ resp.) on $\Rep(Q,\alpha)^{ss}_{-\<-,\beta\>}\times\{W\}$ ($\{V\}\times\Rep(Q,\beta)^{ss}_{\<\alpha,-\>}$, resp.) gives a section of determinant line bundle $\widetilde{\lambda}(Q,\alpha)_{-\<-,\beta\>}$ ($\widetilde{\lambda}(Q,\beta)_{\<\alpha,-\>}$, resp.) over $\Rep(Q,\alpha)^{ss}_{-\<-,\beta\>}$ ($\Rep(Q,\beta)^{ss}_{\<\alpha,-\>}$, resp.), which descends to a line bundle $\lambda(Q,\alpha)_{-\<-,\beta\>}$ ($\lambda(Q,\beta)_{\<\alpha,-\>}$, resp.) over $M(Q,\alpha)_{-\<-,\beta\>}$ ($M(Q,\beta)_{\<\alpha,-\>}$, resp.).  Moreover $c$ restricted to $\Rep(Q,\alpha)^{ss}_{-\<-,\beta\>}\times\Rep(Q,\beta)^{ss}_{\<\alpha,-\>}$ also descends to $M(Q,\alpha)_{-\<-,\beta\>}\times M(Q,\beta)_{\<\alpha,-\>}$ and gives a section $\bar{c}$ of $\lambda(Q,\alpha)_{-\<-,\beta\>}\boxtimes\lambda(Q,\beta)_{\<\alpha,-\>}$, which induces a map (the analog of the strange duality map on quiver representations)
\begin{equation}\label{sdquiver}SD(Q):H^0(M(Q,\alpha)_{-\<-,\beta\>},\lambda(Q,\alpha)_{-\<-,\beta\>})^{\vee}\ra H^0(M(Q,\beta)_{\<\alpha,-\>},\lambda(Q,\beta)_{\<\alpha,-\>}).
\end{equation}

\begin{thm}\label{mainquiver}The map $SD(Q)$ in (\ref{sdquiver}) is an isomorphism.
\end{thm}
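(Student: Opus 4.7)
The plan is to reduce Theorem \ref{mainquiver} to Theorem \ref{dewe} (Derksen--Weyman) by a direct semi-invariant computation.  First, by Theorem \ref{GITK} and the identification $M(Q,\alpha)_{\sigma}\cong\Proj(\oplus_{n}\SI(Q,\alpha)_{n\sigma})$, one obtains canonical isomorphisms
\[H^0(M(Q,\alpha)_{-\<-,\beta\>},\lambda(Q,\alpha)_{-\<-,\beta\>})\cong\SI(Q,\alpha)_{-\<-,\beta\>},\]
and symmetrically for the $\beta$-side; so it suffices to exhibit $SD(Q)$, under these isomorphisms, as a linear map $\SI(Q,\alpha)_{-\<-,\beta\>}^\vee\to\SI(Q,\beta)_{\<\alpha,-\>}$ and to show it is an isomorphism.

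Next I would interpret the section $\bar c$ on the level of semi-invariants.  The polynomial $c(V,W)=\det(d^V_W)$ on $\Rep(Q,\alpha)\times\Rep(Q,\beta)$ is semi-invariant of weight $-\<-,\beta\>$ in $V$ and of weight $\<\alpha,-\>$ in $W$, and since $\Bbbk[\Rep(Q,\alpha)\times\Rep(Q,\beta)]\cong\Bbbk[\Rep(Q,\alpha)]\otimes\Bbbk[\Rep(Q,\beta)]$ as $SL(Q,\alpha)\times SL(Q,\beta)$-representations, its joint weight component is an element
\[\tilde c\in\SI(Q,\alpha)_{-\<-,\beta\>}\otimes\SI(Q,\beta)_{\<\alpha,-\>}.\]
Under the identifications above together with K\"unneth for the two projective factors, $\tilde c$ is exactly $\bar c$, and so $SD(Q)$ is the map determined by $\tilde c$ by contraction in the first tensor factor.

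Finally I would invoke Theorem \ref{dewe} twice.  For $V\in\Rep(Q,\alpha)^{ss}_{-\<-,\beta\>}$, evaluation at $V$ gives (up to a nonzero scalar) a functional $\mathrm{ev}_V\in\SI(Q,\alpha)_{-\<-,\beta\>}^\vee$, and writing $\tilde c=\sum_i s_i\otimes t_i$ with $\{s_i\}$ a basis of $\SI(Q,\alpha)_{-\<-,\beta\>}$ one computes
\[SD(Q)(\mathrm{ev}_V)=\sum_i s_i(V)\,t_i=c(V,-)=c^V.\]
The first half of Theorem \ref{dewe} says that the $c^V$ span $\SI(Q,\beta)_{\<\alpha,-\>}$, so $SD(Q)$ is surjective; the equality $\dim\SI(Q,\alpha)_{-\<-,\beta\>}=\dim\SI(Q,\beta)_{\<\alpha,-\>}$ in the second half of Theorem \ref{dewe} then upgrades surjectivity to an isomorphism.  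The only step that requires some care is the bookkeeping relating the tensor $\tilde c$ on the affine representation spaces to the section $\bar c$ on the GIT quotients, in particular the standard fact that a semistable point produces a well-defined (up to scalar) evaluation functional on $H^0(\lambda(Q,\alpha)_{-\<-,\beta\>})$; this is routine for GIT quotients of affine varieties by reductive groups and should not obstruct the argument.
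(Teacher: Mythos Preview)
Your proposal is correct and follows essentially the same route as the paper: identify $H^0$ of the determinant line bundles with the semi-invariant weight spaces via the GIT construction, observe that $SD(Q)$ sends evaluation functionals to the Schofield--Derksen--Weyman semi-invariants $c^V$, and conclude from Theorem~\ref{dewe} (spanning plus dimension equality). The paper's own proof is the same argument compressed into two sentences, additionally citing Lemma~1 of \cite{DW} to note that one may restrict to \emph{semistable} $V$ (and that $c^V$ depends only on the $S$-equivalence class), which closes the small gap between your restriction to $V\in\Rep(Q,\alpha)^{ss}_{-\langle-,\beta\rangle}$ and the statement of Theorem~\ref{dewe} as written (which spans over all $V$); alternatively you could simply drop the semistability restriction on $V$, since $\mathrm{ev}_V$ is a perfectly good functional on $\SI(Q,\alpha)_{-\langle-,\beta\rangle}$ for any $V$.
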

\begin{proof}By the construction of $M(Q,\alpha)_{\sigma}$, we know that 
$$H^0(M(Q,\alpha)_{-\<-,\beta\>},\lambda(Q,\alpha)_{-\<-,\beta\>})\cong \SI(Q,\alpha)_{-\<-,\beta\>};$$
and 
$$H^0(M(Q,\beta)_{\<\alpha,-\>},\lambda(Q,\beta)_{\<\alpha,-\>})\cong\SI(Q,\beta)_{\<\alpha,-\>}.$$  

By Lemma 1 in \cite{DW} and Theorem \ref{dewe}, we see that $\SI(Q,\alpha)_{-\<-,\beta\>}$ is a $\Bbbk$-linear span of semi-invariants $c_W$ with $W\in\Rep(Q,\beta)$ semistable with respect to weight $\<\alpha,-\>$ and $c_W$ only depend on the $S$-equivalence classes of $W$; and also the analogous result is true for $\SI(Q,\beta)_{\<\alpha,-\>}$.  Hence the theorem follows from the definition of $SD(Q)$ and basic linear algebra.
\end{proof}

\subsection{Some tilting theory.}%%%%%%%.        Section 2.4 Tilting theory.         %%%%%%%
Tilting theory helps to relate semistable sheaves to semistable quivers.  In this subsection, we recall some definitions and results in tilting theory, for more details we refer to \cite{HP}, \cite{Ba} and \cite{King2}.

\begin{defn}\label{deexcept}A coherent sheaf $\me$ on a smooth algebraic $\Bbbk$-variety $X$ is called \textbf{exceptional} if $\Hom(\me,\me)\cong \Bbbk$ and $\Ext^i(\me,\me)=0$ for all $i\geq1$.
An ordered collection $\me_1,\cdots,\me_n$ of exceptional sheaves is called \textbf{an exceptional sequence} if 
$\Ext^i(\me_j,\me_l)=0$ for all $i$ and $j>l$.  If an exceptional sequence generates the derived category $D^b(X)$ of bounded complexes 
of coherent sheaves, then it is called \textbf{full}.  A \textbf{strongly exceptional} sequence is an exceptional sequence such that $\Ext^i(\me_j,\me_l)=0$ for all $i\geq1$ and all $j,l$.
\end{defn}
\begin{defn}\label{detilting}A \textbf{tilting sheaf} is a coherent sheaf $\mt$ on $X$ such that
\begin{itemize}\item[(i)]$\Ext^i(\mt,\mt)=0$ for $i\geq1$;
\item[(ii)]$\Hom(\mt,\mt)$ has finite dimension; 
\item[(iii)]$\mt$ generates $D^b(X)$.
\end{itemize}
\end{defn}

We have 
\begin{thm}[Theorem 2.2 in \cite{King2}]\label{tilting1}Let $\mt$ be a tilting sheaf, and let $A:=\Hom(\mt,\mt)$ and $D^b(A)$ be the derived category of bounded complexes of finite dimensional $A$-right modules.  Then we have the following two derived functors
\[R^{\bullet}\Hom(\mt,-):D^b(X)\ra D^b(A),\]
and
\[-\overset{\textbf{L}~}{\otimes_{A}}\mt:D^b(A)\ra D^b(X),\]
which are mutually inverse equivalences between $D^b(X)$ and $D^b(A)$.
\end{thm}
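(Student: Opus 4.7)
The plan is to exhibit the two functors as an adjoint pair, check the unit and counit on a distinguished object, and then propagate the isomorphism through the whole derived category by a thick-subcategory argument based on the generation hypothesis (iii).

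First, I would observe that $R\Hom(\mt,-)$ and $-\otimes^{\mathbf{L}}_{A}\mt$ form an adjoint pair, derived in the standard way from the ordinary adjunction
\[\Hom_{A}(M,\Hom(\mt,\mg))\cong\Hom_{X}(M\otimes_{A}\mt,\mg).\]
To see that the two functors really land in the bounded derived categories, I would use (ii) to control finite-dimensionality of the $A$-modules produced by $R\Hom(\mt,\mf)$ on a coherent input, and (i) together with (iii) to bound the cohomological amplitude; an inductive resolution argument then shows that $A$ has finite global dimension, so that $-\otimes^{\mathbf{L}}_{A}\mt$ takes $D^{b}(A)$ into $D^{b}(X)$ and vice versa.

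Next, I would verify the unit and counit of the adjunction on the natural generators. Evaluated at $\mt$ itself, condition (i) gives $R\Hom(\mt,\mt)\cong A$ concentrated in degree zero, and then $A\otimes^{\mathbf{L}}_{A}\mt\cong\mt$ because $A$ is free of rank one over itself, so the counit $\varepsilon_{\mt}$ is a quasi-isomorphism. Symmetrically, starting from $A\in D^{b}(A)$ one finds
\[R\Hom(\mt,A\otimes^{\mathbf{L}}_{A}\mt)\cong R\Hom(\mt,\mt)\cong A,\]
so the unit $\eta_{A}$ is also a quasi-isomorphism.

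Finally, I would propagate these isomorphisms through $D^{b}$. The full triangulated subcategory of $D^{b}(X)$ on which $\varepsilon$ is an isomorphism is thick and contains $\mt$, so it equals $D^{b}(X)$ by (iii). The analogous subcategory of $D^{b}(A)$ is thick and contains $A$; since every bounded complex of finitely generated $A$-modules can be built from $A$ by finitely many cones, using the finite global dimension established above, it equals $D^{b}(A)$. I expect the main obstacle to be this last finiteness check, namely verifying that $A$ has finite global dimension and that the image of $A$ under $-\otimes^{\mathbf{L}}_{A}\mt$ is literally $\mt$, so that the thick subcategory generated by $A$ really is all of $D^{b}(A)$; once these boundedness and generation issues are settled the equivalence of categories is essentially formal.
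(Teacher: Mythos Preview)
The paper does not give its own proof of this theorem: it is stated as a citation of Theorem~2.2 in \cite{King2} and used as a black box, with no argument supplied. So there is nothing in the paper to compare your proposal against.

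That said, your sketch is essentially the standard proof of the tilting equivalence and is correct in outline. The adjunction, the verification of unit and counit on $\mt$ and $A$, and the propagation via thick subcategories are exactly the right ingredients. The one point that deserves more care is the boundedness issue you flag at the end: the fact that $A$ has finite global dimension is usually deduced not from (i)--(iii) alone but from the smoothness of $X$ (so that $D^b(X)$ itself has finite homological dimension), combined with the generation hypothesis (iii). Once you know $X$ is smooth, any object of $D^b(X)$ is perfect, and since $\mt$ generates, $A=R\Hom(\mt,\mt)$ inherits a finite projective dimension bound for all finite $A$-modules. Without smoothness of $X$ (or some substitute), conditions (i)--(iii) by themselves do not obviously force finite global dimension of $A$, so you should make that hypothesis explicit when you fill in the details.
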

By Lemma 4.2 and Remark 4.4 in \cite{King2} we have
\begin{lemma}\label{titlting2}If $\me_1,\cdots,\me_n$ is a strongly exceptional sequence which is also full, then $\me_1\oplus\cdots\oplus \me_n$ is a titling sheaf.
\end{lemma}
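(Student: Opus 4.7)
The plan is to verify directly the three conditions of Definition \ref{detilting} for $\mt:=\me_1\oplus\cdots\oplus\me_n$, each one reducing via additivity of $\Hom$ and $\Ext^i$ under the finite direct sum to a property already in hand from the (strongly) exceptional and full hypotheses.

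First, for condition (i) I would expand
\[
\Ext^i(\mt,\mt)=\bigoplus_{j,l=1}^{n}\Ext^i(\me_j,\me_l)
\]
and observe that for $i\geq 1$ every summand vanishes: the diagonal terms $j=l$ vanish by the exceptional property of each $\me_j$; the terms with $j>l$ vanish by the exceptional-sequence hypothesis (which kills $\Ext^i$ for \emph{all} $i$ when $j>l$); and the terms with $j<l$ vanish by the strongly exceptional hypothesis. Condition (ii) is handled in parallel from
\[
\Hom(\mt,\mt)=\bigoplus_{j,l=1}^{n}\Hom(\me_j,\me_l),
\]
a finite sum in which each diagonal piece $\Hom(\me_j,\me_j)=\Bbbk$ by the exceptional condition, each $\Hom(\me_j,\me_l)=0$ for $j>l$ by the exceptional-sequence condition, and the remaining $\Hom(\me_j,\me_l)$ with $j<l$ are finite-dimensional because the $\me_j$ are coherent sheaves on the projective variety $X$.

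For condition (iii) I would argue that the thick subcategory of $D^b(X)$ generated by $\mt$ (i.e.\ the smallest triangulated subcategory containing $\mt$ and closed under direct summands and shifts) automatically contains every $\me_j$, since each $\me_j$ is a direct summand of $\mt$; by the fullness of the exceptional sequence this subcategory is forced to be all of $D^b(X)$, so $\mt$ generates $D^b(X)$. The only point that asks for a moment of care — and the mildest obstacle in the proof — is reconciling the two notions of ``generation'': ``full'' for an exceptional collection (triangulated generation by the individual $\me_j$'s) versus the tilting notion (generation by a single object $\mt$). The summand observation above makes the two coincide here, so no extra input beyond the stated hypotheses is needed.
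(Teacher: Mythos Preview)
Your argument is correct. The paper itself does not supply a proof of this lemma; it simply records it as a consequence of Lemma~4.2 and Remark~4.4 in King's unpublished note \cite{King2}. Your direct verification of conditions (i)--(iii) of Definition~\ref{detilting} is exactly the standard unpacking one would give if asked to spell out that citation: the additivity of $\Ext^i$ under finite direct sums reduces (i) and (ii) to the defining properties of a strongly exceptional sequence, and for (iii) your observation that each $\me_j$ is a direct summand of $\mt$ is precisely what bridges ``full exceptional sequence'' generation and ``single tilting object'' generation (thick subcategories being closed under summands). One tiny caveat: in (ii) you invoke projectivity of $X$ to get finite-dimensionality of the off-diagonal $\Hom(\me_j,\me_l)$, whereas Definition~\ref{deexcept} only assumes $X$ smooth; this is harmless in the paper's setting ($X=\p^2$) and in King's, but strictly speaking it is an extra standing hypothesis worth flagging.
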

\section{Notations.} %%%%%%%%%           Section 3.   Notations          %%%%%%
\begin{enumerate}
\item From now on we fix the base field $\Bbbk=\mathbb{C}$.
\item Let $X=\p^2$ and $H$ be the hyperplane class.  Let $K(\p^2)$ be the Grothendieck group of coherent sheaves over $\p^2$. 
%\item We use the same letter to denote both the line bundles and the corresponding divisor classes, but we write $L_1\otimes L_2$, $L^{-1}$ for line bundles while $L_1+L_2$, $-L$ for the corresponding divisor classes.  Denote by $L_1.L_2$ the intersection number of $L_1$ and $L_2$.  $L^2:=L.L$. 
\item Let $\mf$, $\mg$ be two coherent sheaves.  Then
\begin{itemize}
\item Denote by $r(\mf)$, $c_i(\mf)$ and $\chi(\mf)$ the rank, the i-th Chern class and the Euler characteristic of $\mf$ respectively;
% \item $[\mf]$ is the class of $\mf$ in $K(X)$;
\item For $r(\mf)>0$, we define the slop of $\mf$ 
 $$\mu(\mf):=\frac{c_1(\mf).H}{r(\mf)}$$
and its discriminant 
$$\Delta(\mf):=\frac{1}{r(\mf)}(c_2(\mf)-(1-\frac1{r(\mf)})\frac{c_1(\mf)^2}{2}).$$
We also denote $\Delta(\mf)$ by $\Delta(r(\mf),c_1(\mf).H,c_2(\mf))$.

\item $h^i(\mf)=dim~H^i(\mf)$ and hence $\chi(\mf)=\sum_{i\geq0}(-1)^i h^i(\mf)$;
\item $\text{ext}^i(\mf,\mg)=\dim~\Ext^i(\mf,\mg)$, $\text{hom}(\mf,\mg)=\dim~\Hom(\mf,\mg)$ and $\chi(\mf,\mg)=\sum_{i\geq0}(-1)^i\text{ext}^i(\mf,\mg)$.  
\end{itemize}
\item We denote by $u_d$ the class of 1-dimensional sheaves of determinant $dH$ and Euler characteristic 0 in $K(\p^2)$.  Let $\mdh$ be the moduli space of semistable sheaves of class $u_d$.
\item We denote by $c^r_n$ the class of sheaves of rank $r$, first Chern class 0 and second Chern class $n$ in $K(\p^2)$.  Let $M(r,0,n)$ be the moduli space of semistable sheaves of class $c^r_n$.
%\item If $\mf$ is a 1-dimensional sheaf, then $Supp(\mf)$ or $C_{\mf}$ is the (schematic) support of $\mf$.
\item Let $M$ be any moduli space of semistable sheaves.  We write $\mf\in M$ if the $S$-equivalence class of $\mf$ is in $M$.
\end{enumerate}

%%%%%%%%%%%%%%%%         Section 4  Height zero moduli spaces on P2        %%%%%%%%%%%%%%%%%%%
\section{Height zero moduli spaces of semistable sheaves on $\p^2$.}

%Now let $X=\p^2$.  For each sheaf $\me$ with rank $r(\me)$, Chern classes $c_1(\me), ~c_2(\me)$, we define its slop
 %$$\mu(\me):=\frac{c_1(\me).H}{r(\me)}$$
%where $H$ is the hyperplane class on $\p^2$, and its discriminant 
%$$\Delta(\me):=\frac{1}{r(\me)}(c_2(\me)-(1-\frac1{r(\me)})\frac{c_1(\me)^2}{2}).$$
%We also denote $\Delta(\me)$ by $\Delta(r(\me),c_1(\me).H,c_2(\me))$.

%\subsection{Height zero moduli spaces on $\p^2$.} %%%%%%%%%%%%.  Subsection.  4.2.   Height zero moduli spaces.  %%%%%%% 
It is easy to see that $\Delta(\me)=\frac{1}{2r(\me)^2}(-\chi(\me,\me))+\frac12$ and by \cite{DLP} $\me$ is exceptional iff $\me$ is a stable vector bundle with $\Delta(\me)<\frac12$.   Let $\ke$ be the set of all slops of exceptional bundles.  For each element $a\in\ke$ there is exactly one exceptional bundle $\me_{a}$ up to isomorphisms such that $\mu(\me_a)=a$.  We define the interval $I_a:=(a-x_a,a+x_a)$ with $x_a=\frac32-\sqrt{\frac94-\frac1{r(\me_a)^2}}$.  Let $P(y)=\frac{y^2+3y+2}2$ be a polynomial in $y$.  Then $x_a$ is the smaller solution of the equation $P(-y)-\Delta(\me_a)=\frac12$.  

We define $r_a:=r(\me_a),~\Delta_a:=\Delta(\me_a)$ and $a.b:=\frac{a+b}2+\frac{\Delta_b-\Delta_a}{3+a-b}$ for $a,b\in\ke$.  There is a bijection $\epsilon:\mathbb{Z}[\frac12]\ra \ke$ defined inductively by setting $\epsilon(n)=n$ for $n\in\mathbb{Z}$ and
$$\epsilon(\frac{2p+1}{2^q})=\epsilon(\frac{p}{2^{q-1}}).\epsilon(\frac{p+1}{2^{q-1}}).$$
 
By \cite{Ba} and \cite{GR}, we have
\begin{thm}\label{tilting3}$\me_1,\me_2,\me_3$ is a full strongly exceptional sequence if and only if the slops $(\mu(\me_1),\mu(\me_2),\mu(\me_3))$ are of the forms \footnotesize
\[(\epsilon(\frac{p-1}{2^q}),\epsilon(\frac{p}{2^q}),\epsilon(\frac{p+1}{2^q})),~(\epsilon(\frac{p}{2^q}),\epsilon(\frac{p+1}{2^q}),\epsilon(\frac{p-1}{2^q}+3)),~(\epsilon(\frac{p+1}{2^q}-3),\epsilon(\frac{p-1}{2^q}),\epsilon(\frac{p}{2^q})).\]\normalsize
\end{thm}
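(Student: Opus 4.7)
The plan is to invoke the Dr\'ezet--Le Potier classification of exceptional bundles on $\p^2$ (which is precisely encoded in the bijection $\epsilon:\mathbb{Z}[\frac12]\to\ke$ recalled above) together with the mutation theory of exceptional sequences developed by Gorodentsev--Rudakov. The strategy has three stages: first, reduce the statement to a numerical criterion on the slope triple $(\mu(\me_1),\mu(\me_2),\mu(\me_3))\in\ke^3$ coming from Ext-pairings; second, check by induction on the dyadic level $q$ that each of the three listed families does yield a full strongly exceptional sequence; third, show conversely that every full strongly exceptional triple arises in one of these three forms.

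For the "if" direction, the base case $q=0$ is the standard sequence $(\mo,\mo(1),\mo(2)) = (\epsilon(0),\epsilon(1),\epsilon(2))$, which is manifestly full and strongly exceptional and belongs to the first listed family (with $p=1$). By helix theory on $\p^2$, every other full exceptional sequence is obtained from this one by a finite chain of left/right mutations, possibly composed with twists by $\mo(3)=\omega_{\p^2}^{-1}$. Under the parametrization $\epsilon$, these mutations correspond exactly to the dot operation $(a,b)\mapsto a.b$, which (together with the integer translations) generates all of $\mathbb{Z}[\frac12]$. A direct induction on $q$ based on the recursion $\epsilon(\frac{2p+1}{2^q})=\epsilon(\frac{p}{2^{q-1}}).\epsilon(\frac{p+1}{2^{q-1}})$ then shows that mutations permute the three listed families — the three forms correspond precisely to the three cyclic orderings of a period-three helix.

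For the "only if" direction, one argues that pairwise Ext-vanishing $\Ext^i(\me_a,\me_b)=0$ for $i\geq 1$ between two exceptional bundles (using Serre duality $\Ext^2(\me_a,\me_b)\cong\Hom(\me_b,\me_a\otimes\omega_{\p^2})^{\vee}$) forces $b-a$ to lie in a precise range determined by $x_a, x_b$ and the polynomial $P(y)=(y^2+3y+2)/2$, which computes $\chi(\me_a,\me_b)$ in the relevant range. Imposing this rigid numerical constraint on all three pairs in a strongly exceptional triple, together with the fullness requirement that the three bundles generate $D^b(\p^2)$ (equivalent on $\p^2$ to a rank condition in $K(\p^2)$), forces the triple to fall into one of the three displayed forms.

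The main obstacle is controlling strong exceptionality — as opposed to mere exceptionality — under the inductive mutation step. Once $\epsilon$ is in place, exceptionality becomes essentially combinatorial, but the simultaneous vanishing of both $\Ext^1$ and $\Ext^2$ for all three pairs is delicate: it depends on the fine interplay between the intervals $I_a$, the dot operation, and the fact that $x_a$ is characterized as the smaller root of $P(-y)=\Delta_a+\frac12$. I expect verifying that this stronger condition propagates correctly through every mutation — equivalently, through every level of the dyadic recursion — to be the technical heart of the proof, and this is the reason the standard references proceed by an explicit case analysis across the three helix patterns rather than by a purely formal argument.
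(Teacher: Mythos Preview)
The paper does not give a proof of this theorem: it is stated as a known result, prefaced only by ``By \cite{Ba} and \cite{GR}, we have'' and then used as a black box. There is therefore nothing to compare your argument against at the level of the paper itself.

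Your sketch is essentially an outline of the content of those cited references (Baer; Gorodentsev--Rudakov), and the ingredients you name---the Dr\'ezet--Le Potier parametrization $\epsilon$, helix theory on $\p^2$, and braid-group mutations transporting the base triple $(\mo,\mo(1),\mo(2))$ to all others---are indeed the standard ones. As a plan it is sound; the one point where you are slightly vague is the ``only if'' direction: the fullness condition on $\p^2$ is more than a rank condition in $K(\p^2)$, and the clean way the references handle it is to show that the braid group of mutations (plus the Serre twist) acts transitively on the set of full exceptional triples, so that any full strongly exceptional triple is a mutation of the standard one and hence automatically of one of the three listed helix shapes. Framing it that way avoids the delicate numerical case analysis you anticipate at the end.
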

We recall some results from \cite{Dr1} as follows.  
%We have the following theorem by Dr\'ezet.
\begin{thm}(Theorem 1 in \cite{Dr1})\label{hzero1} \begin{enumerate}
\item $I_a$ are all disjoint and $\mathbb{Q}=\mathbb{Q}\cap\displaystyle{\bigcup_{a\in\ke}}I_a$;
\item There is a function $\delta:\mathbb{Q}\ra\mathbb{Q}$ defined by the formula
\[\delta(\mu)=P(-|\mu-a|)-\delta(a),\qquad if~\mu\in I_a.\]
\item The moduli space $M(r,c_1,c_2)$ of semi-stable sheaves with rank $r\geq1$, 1st Chern classes $c_1H$ and 2nd Chern class $c_2$ has positive dimension iff 
\[\delta(\frac{c_1}r)\leq \Delta(r,c_1,c_2).\]
This property also characterizes the function $\delta$.
\end{enumerate}
\end{thm}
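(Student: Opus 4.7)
The plan is to follow the Drézet--Le Potier strategy, combining the combinatorics of exceptional bundles with Ext-vanishing arguments against semistable sheaves. I would begin with part (1) by analyzing exceptional bundles on $\p^2$ through iterated mutations. Starting from the line bundles $\mo(n)$, each mutation of an adjacent pair $(\me_a,\me_b)$ with $a,b\in\ke$ produces a new exceptional bundle of slope $a.b$; this is precisely the inductive structure realized by the bijection $\epsilon:\mathbb{Z}[\frac12]\to\ke$. Disjointness of the intervals $I_a$ then reduces to the computation that the right endpoint $a+x_a$ and the left endpoint $(a.b)-x_{a.b}$ coincide whenever $a,b$ are adjacent, which is a routine manipulation of the quadratic $P(-y)-\Delta_a=\frac12$ defining $x_a$ together with the recursive formula for $\Delta_{a.b}$. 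The covering property $\mathbb{Q}=\bigcup_a I_a$ then follows by the density of dyadic slopes together with this endpoint identification, which leaves no gap.

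Part (2) is essentially formal once (1) is in place: since every rational lies in a unique $I_a$, one defines $\delta$ inductively on the dyadic depth of $a$, and checks that $\delta(\mu)=P(-|\mu-a|)-\delta(a)$ is consistent at the common boundary points $a\pm x_a$ using the same endpoint identity.

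For the necessary direction of (3), I would argue by contradiction. Suppose $\mf$ is semistable with $\mu(\mf)\in I_a$ and $\Delta(\mf)<\delta(\mu(\mf))$. The $\mu$-stability of $\me_a$ and the semistability of $\mf$, together with the fact that $|\mu(\mf)-a|<3$, force $\Hom(\me_a,\mf)=0$ when $\mu(\mf)<a$, and dually $\Ext^2(\me_a,\mf)\cong\Hom(\mf,\me_a(-3))^{\vee}=0$ when $\mu(\mf)>a$. Riemann--Roch then expresses $\chi(\me_a,\mf)/(r_a\,r(\mf))$ as an affine combination of $P(-|\mu(\mf)-a|)$, $\Delta(\mf)$ and $\Delta_a$, and the sign of $\chi(\me_a,\mf)$ dictated by the vanishing above yields exactly $\Delta(\mf)\geq\delta(\mu(\mf))$, a contradiction.

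The sufficient direction is the main obstacle. Here I would invoke tilting theory (Theorem \ref{tilting1}) applied to a full strong exceptional triple containing $\me_a$, as classified in Theorem \ref{tilting3}, so that $D^b(\p^2)\simeq D^b(A)$ for the associated quiver algebra $A=\End(\mt)$. Semistable sheaves of the prescribed Chern character correspond, after tilting, to semistable representations of the quiver in the sense of Theorem \ref{GITK}, and producing one reduces to exhibiting a generic representation of the right dimension vector together with a weight for which it is stable. This last step is the genuinely hard point: one needs either an explicit Beilinson-type monad construction producing a sheaf with the required numerical invariants, or a deformation/induction argument on $c_2$ that propagates known non-emptiness (e.g.\ ideal sheaves of points, twists of exceptional bundles, and extensions thereof) to every point above the curve $\delta$, while also verifying that the resulting moduli space has positive dimension rather than consisting of a single isolated exceptional bundle.
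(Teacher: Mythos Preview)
The paper does not prove this statement at all: Theorem~\ref{hzero1} is quoted verbatim as ``Theorem 1 in \cite{Dr1}'' and used as a black box. There is therefore no proof in the paper to compare your proposal against.

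That said, your sketch is a fair outline of the original Dr\'ezet--Le Potier argument from \cite{Dr1} and \cite{DLP}, and the necessary direction of (3) via Ext-vanishing against $\me_a$ and Riemann--Roch is essentially what is encoded in the paper's formula (\ref{exht}). Two cautions if you intend to flesh this out. First, your appeal to tilting and Theorem~\ref{GITK} for the sufficient direction is anachronistic relative to \cite{Dr1}: Dr\'ezet's original existence proof predates King's GIT framework and instead uses explicit Beilinson monads and a direct dimension count, so if you want to reconstruct the actual argument you should work with the monad description rather than the quiver stability language. Second, your claim that the covering $\mathbb{Q}\subset\bigcup_a I_a$ ``follows by the density of dyadic slopes together with this endpoint identification, which leaves no gap'' is too quick: the endpoints $a\pm x_a$ are irrational, the intervals do not literally tile $\mathbb{R}$, and one needs a genuine argument (in \cite{DLP}) that every rational number is eventually captured by some $I_{\epsilon(p/2^q)}$ as $q$ grows.
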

For any $\mu\in\mathbb{Q}$, we call $a$ the \emph{associated exceptional slope} to $\mu$ if $\mu\in I_a$.  Let $a$ be the associated exceptional slope to $\frac{c_1}r$.  Then the \emph{height} $h(M(r,c_1,c_2))$ of the moduli space $M(r,c_1,c_2)$ is defined as follows.
\begin{equation}\label{delight}h(M(r,c_1,c_2)):=rr_a(\Delta(r,c_1,c_2)-\delta(\frac{c_1}{r})),
\end{equation}
%where $a$ is the associated exceptional slope to $\frac{c_1}{r}$.  
By a direct investigation, we see that 
\begin{equation}\label{exht}h(M(r,c_1,c_2))=\left\{\begin{array}{ll}-\chi(\me_a,\mf)&\text{if }\mu\leq a;\\ -\chi(\mf,\me_a)&\text{if }\mu\geq a.\end{array}\right.
\end{equation}where $\mf$ is any coherent sheaf of rank $r$, 1st Chern classes $c_1H$ and 2nd Chern class $c_2$. 

Let $h(M(r,c_1,c_2))=0$, i.e. $M(r,c_1,c_2)$ is of height zero.  With no loss of generality, we assume $\frac{c_1}r\in(a-x_a,a]$.  We take a fully strongly exceptional sequence $(\me_1,\me_2,\me_3)$ such that $\mu(\me_3)=a$.  Then as discussed in \cite{Dr1}, every $\mf\in M(r,c_1,c_2)$ has the following resolution
\begin{equation}\label{resov}0\ra \me_1\otimes\Ext^1(\mf,\me_1)^{\vee}\ra\me_2\otimes\Ext^1(\ms,\mf)\ra\mf\ra0,
\end{equation}
where $\ms$ is the cokernel of the injective canonical evaluation map 
\[ev^{\vee}:\me_2\ra\me_3\otimes\Hom(\me_2,\me_3)^{\vee}.\]

Let $m_1:=\text{ext}^1(\mf,\me_1)$, $m_2:=\text{ext}^1(\ms,\mf)$ and $q:=\text{hom}(\me_1,\me_2)$ (actually $q=3r_a$, c.f. \cite{Dr1}).  The resolution in (\ref{resov}) assigns to every semistable sheaf $\mf$ a representation with dimension vector $\alpha:=(m_1,m_2)$ of the following quiver
\begin{equation}\label{gequiver}Q(q):\quad\xymatrix@C=0.5cm{x_1\ar@/^2pc/[rr]^{a_1}\ar@/^1pc/[rr]^{a_2}\ar@/_1pc/[rr]^{a_{q-1}}\ar@/_2pc/[rr]_{a_q}&\vdots&x_2,}\end{equation}
i.e. $Q(q)=(Q_0,Q_1)$ with $Q_0=\{x_1,x_2\}$ and $Q_1=\{a_1,a_2,\cdots,a_q\}$.  

Since $Q_0$ consists of two points, all $0\neq\sigma\in\Gamma^*$ such that $\sigma(\alpha)=0$ are proportional and hence there is only one stability condition $\sigma$ (up to scalars) such that $\Rep(Q(q),\alpha)^{ss}_{\sigma}$ is not empty.  Let $M(Q(q),(m_1,m_2)):=M(Q(q),\alpha)_{\sigma}$ be the unique non-empty moduli space of semistable representations of $Q$ with dimension vector $\alpha=(m_1,m_2)$.  Then we have 
\begin{thm}(Theorem 2 in \cite{Dr1})\label{hzero2}Let $M(r,c_1,c_2)$ be a moduli space of height zero.  Then resolution in (\ref{resov}) gives an isomorphism 
\begin{equation}\label{isom1}f:M(r,c_1,c_2)\ra M(Q(q),(m_1,m_2))\end{equation} 
inducing an isomorphism on the open subspaces consisting of stable objects.
\end{thm}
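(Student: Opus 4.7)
The plan is to combine the tilting equivalence of Section 2.4 with King's GIT description of quiver moduli (Theorem \ref{GITK}) to upgrade the resolution (\ref{resov}) into a morphism of moduli spaces, and then construct an explicit inverse. I set $\mt := \me_1 \oplus \me_2 \oplus \me_3$; by Theorem \ref{tilting3} and Lemma \ref{titlting2}, $\mt$ is a tilting sheaf, so $R\Hom(\mt,-)$ gives an equivalence $D^b(\p^2)\simeq D^b(A)$ with $A=\End(\mt)$. The first step is to verify that for every $\mf\in M(r,c_1,c_2)$ of height zero with $\mu(\mf)\in(a-x_a,a]$, one has $\Ext^i(\me_3,\mf)=0$ for every $i$: semistability of $\mf$, stability of $\me_3$, and Serre duality yield $\Ext^0(\me_3,\mf)=\Ext^2(\me_3,\mf)=0$, and then the height-zero identity $\chi(\me_3,\mf)=0$ from (\ref{exht}) forces $\Ext^1(\me_3,\mf)=0$. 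Analogous slope arguments kill $\Hom(\me_j,\mf)$ and $\Ext^2(\me_j,\mf)$ for $j=1,2$, so that $R\Hom(\mt,\mf)$ is concentrated in degree $1$ at the two vertices corresponding to $\me_1$ and $\me_2$.

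Next I would apply $\Hom(-,\me_1)$ and $\Hom(\ms,-)$ to the resolution (\ref{resov}) and to the defining sequence $0\to\me_2\to\me_3\otimes\Hom(\me_2,\me_3)^\vee\to\ms\to0$ to organise the two vertex-data $V(\mf)(x_1):=\Ext^1(\mf,\me_1)^\vee$ and $V(\mf)(x_2):=\Ext^1(\ms,\mf)$, together with the $q=3r_a$ arrow-maps coming from the composition pairing $\Hom(\me_1,\me_2)\otimes\Ext^1(\mf,\me_1)^\vee\to\Ext^1(\ms,\mf)$, into a representation $V(\mf)\in\Rep(Q(q),(m_1,m_2))$. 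Functoriality and flat base change then descend this construction to a morphism $f:M(r,c_1,c_2)\to M(Q(q),(m_1,m_2))$. For the inverse, I would send $V$ to $\mathrm{coker}(\phi_V)$ where $\phi_V:\me_1\otimes V(x_1)^\vee\to\me_2\otimes V(x_2)$ is assembled from the arrow-maps via the fixed identification $\Hom(\me_1,\me_2)\cong\Bbbk^q$; a Chern-character bookkeeping confirms that the cokernel, when nonzero, lies in the class $c^r_n$.

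The hardest step is matching the two stability notions, i.e.\ showing that $V$ is $\sigma$-semistable (for the unique weight $\sigma$, up to scalars, with $\sigma(m_1,m_2)=0$) if and only if $\phi_V$ is injective with Gieseker-semistable cokernel, and similarly for stability. A subrepresentation $U\subset V$ gives, through the derived equivalence, a subcomplex whose surviving cohomology is a subsheaf of $\mathrm{coker}(\phi_V)$; the numerical coincidence is that with only two vertices $\sigma(\underline{d}(U))$ is a single linear form in $(\underline{d}(U)(x_1),\underline{d}(U)(x_2))$ which, after translation via the Chern-character of the associated subsheaf, becomes precisely the slope-discrepancy controlling Gieseker stability. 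The converse direction — that a destabilising subsheaf lifts to a genuine subrepresentation in the correct heart — is exactly where the height-zero hypothesis is essential, since it guarantees that the relevant sub- and quotient-sheaves also have $R\Hom(\mt,-)$ concentrated in degree $1$ and hence admit resolutions of the same form (\ref{resov}). Once (semi)stability has been matched, $f$ is a bijection on closed points; on the stable loci both sides are geometric quotients by Theorem \ref{GITK} of smooth irreducible varieties of the same dimension, so $f$ is an isomorphism there, and the universal property of the GIT quotient promotes this to an isomorphism of the full projective moduli spaces.
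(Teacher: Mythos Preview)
The paper does not prove Theorem~\ref{hzero2}: it is simply quoted as ``Theorem~2 in \cite{Dr1}'' and used as input. So there is no proof in the paper to compare your proposal against; what you have written is a sketch of (a modernised version of) Dr\'ezet's original argument.

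As a sketch your outline is on the right track, but two steps are too loose to count as a proof. First, ``functoriality and flat base change then descend this construction to a morphism $f$'' glosses over a genuine issue: $M(r,c_1,c_2)$ need not carry a universal family when strictly semistable sheaves are present, so you cannot simply apply $R\Hom(\mt,-)$ fibrewise on the moduli space. You must work on the parametrising Quot scheme (or the semistable locus of $\Rep(Q(q),(m_1,m_2))$ on the other side), check equivariance under the relevant group actions, and then descend through the two GIT quotients; this is exactly how Dr\'ezet proceeds. Second, ``$f$ is a bijection on closed points \dots\ the universal property of the GIT quotient promotes this to an isomorphism of the full projective moduli spaces'' is not a valid step as stated: a bijective morphism of projective varieties is not automatically an isomorphism. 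What actually works is that both moduli spaces are normal (good quotients of smooth varieties by reductive groups), your $f$ is a birational morphism by the stable-locus argument, and Zariski's main theorem then forces $f$ to be an isomorphism. Your stability-matching paragraph is the right idea but is where most of the real work in \cite{Dr1} lies; in particular the claim that destabilising subsheaves lift to subrepresentations ``in the correct heart'' requires controlling the $\Ext$-vanishing for the sub- and quotient-sheaves, which is not immediate from height zero of $\mf$ alone.
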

%Also
\begin{thm}(Theorem 3 in \cite{Dr1})\label{hzero3}There is a natural isomorphism 
\begin{equation}\label{isom2}M(Q(q),(m_1,m_2))\xrightarrow[\cong]{g} M(Q(q),(m_2,qm_2-m_1))\end{equation}
inducing an isomorphism on the open subspaces consisting of stable objects.
\end{thm}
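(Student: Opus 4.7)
The plan is to realize $g$ as the BGP reflection functor at the source vertex $x_1$ of $Q(q)$, followed by the tautological identification $Q(q)^{op}\cong Q(q)$ obtained by swapping the two vertex labels. Since $|Q_0|=2$, up to positive scalar the unique weight making the semistable locus non-empty on $\Rep(Q(q),(m_1,m_2))$ is $\sigma=(m_2,-m_1)$. Given a $\sigma$-semistable $V=(V_1,V_2;\phi_1,\ldots,\phi_q)$, bundle the arrow maps into a single linear map
\[
\Phi_V\colon V_1\longrightarrow V_2\otimes\Bbbk^q,\qquad v\longmapsto\sum_{i=1}^q \phi_i(v)\otimes e_i.
\]
Semistability forces $\Phi_V$ to be injective, since otherwise $(\ker\Phi_V,0)$ would be a subrepresentation of strictly positive $\sigma$-weight. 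Denoting the cokernel by $W$ (so $\dim W=qm_2-m_1$), the $q$ compositions $\psi_i\colon V_2\xrightarrow{\cong}V_2\otimes e_i\hookrightarrow V_2\otimes\Bbbk^q\twoheadrightarrow W$ assemble into a new representation
\[
g(V)\ :=\ (V_2,W;\psi_1,\ldots,\psi_q)\ \in\ \Rep(Q(q),(m_2,qm_2-m_1)).
\]

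The next step is to check that $g(V)$ is semistable (respectively stable) for the unique stability weight $\sigma'=(qm_2-m_1,-m_2)$ on the target precisely when $V$ is $\sigma$-semistable (respectively stable) on the source. This is a direct translation: every proper subrepresentation of $g(V)$ can be lifted through the defining short exact sequence to a subquotient of $V$, and a short computation shows that the $\sigma$-weight of the lift matches, up to a fixed sign, the $\sigma'$-weight of the original. Symmetrically one constructs an inverse $g^{-1}$: starting from $V'=(V'_1,V'_2;\psi_i)$ of dimension vector $(m_2,qm_2-m_1)$, bundle the $\psi_i$ into $\Psi_{V'}\colon V'_1\otimes\Bbbk^q\to V'_2$, which semistability now forces to be surjective; set $V_1:=\ker\Psi_{V'}$ and recover the $\phi_i$ by projection onto each $V'_1\otimes e_i$ summand. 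A straightforward diagram chase shows $g^{-1}\circ g=\mathrm{id}$ and $g\circ g^{-1}=\mathrm{id}$.

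Finally, these pointwise constructions must be promoted to a morphism of GIT quotients. The operations are visibly functorial and $GL(Q(q),\alpha)$-equivariant, and the formation of kernels and cokernels of rank-constant maps of trivial vector bundles commutes with flat base change; hence $g$ globalises over the parameter space $\Rep(Q(q),(m_1,m_2))^{ss}_\sigma$ and descends to a morphism $M(Q(q),(m_1,m_2))\to M(Q(q),(m_2,qm_2-m_1))$ with two-sided inverse $g^{-1}$. The main obstacle is the verification on the strictly semistable locus: $g$ must carry $S$-equivalence classes to $S$-equivalence classes, i.e.\ send the associated graded of a Jordan--H\"older filtration of $V$ to that of $g(V)$. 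This reduces to additivity of the cokernel construction on short exact sequences of $\sigma$-semistable representations, which holds because any such representation is, up to $S$-equivalence, a direct sum of $\sigma$-stable summands of common slope $0$.
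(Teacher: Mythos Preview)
Your construction is exactly the one the paper describes (following Dr\'ezet): bundle the $q$ arrow maps into a single map $V_1\to V_2\otimes\Bbbk^q$, use semistability to get injectivity, take the cokernel, and read off the $q$ new arrow maps; the inverse uses the kernel of the surjection $V_1'\otimes\Bbbk^q\to V_2'$. The paper does not supply an independent proof---it cites Chapter~III of \cite{Dr1} for the full argument and only records the explicit matrix relations (\ref{compare1}) and (\ref{compare11}) for later use---so your sketch of the (semi)stability preservation and $S$-equivalence compatibility fills in precisely what the paper delegates to Dr\'ezet, by the same reflection-functor mechanism.
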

%\begin{rem}\label{irre2}By Theorem D in \cite{DLP}, $M(r,c_1,c_2)$ is irreducible.  %Hence so is $M(Q(q),(m_1,m_2))$ and $M(Q(q),(m_2,qm_2-m_1))$ as $m_1,m_2$ in (\ref{resov}).
%\end{rem}
The proof of Theorem \ref{hzero3} can be found in Chapter III in \cite{Dr1}.  For later use, we want to explain explicitly how to define the map $g$ in (\ref{isom2}).  We actually define a map $\tilde{g}$ sending $GL(Q(q),(m_1,m_2))$-orbits in $\Rep(Q(q),(m_1,m_2))^{ss}$ to $GL(Q(q),(m_2,qm_2-m_1))$-orbits in $\Rep(Q(q),(m_2,qm_2-m_1))^{ss}$.  

Let $V\in \Rep(Q(q),(m_1,m_2))^{ss}$, then $V$ can be viewed as an element in $\Hom( \mathbb{C}^{m_1},\mathbb{C}^{m_2})^{\oplus q}\cong\Hom(\mathbb{C}^q\times \mathbb{C}^{m_1},\mathbb{C}^{m_2})$.   Denote by $f_V$ the element in $\Hom( \mathbb{C}^{m_1},\mathbb{C}^q\times\mathbb{C}^{m_2})$ corresponding to $V$.  Since $V$ is semistable, then $f_V$ must be injective (Lemma 18 in \cite{Dr1}).  Denote by $C(V)$ the cokernel of the map $f_V$, then $C(V)\cong \mathbb{C}^{qm_2-m_1}$.  The projection $\mathbb{C}^q\times\mathbb{C}^{m_2}\ra C(V)$ can be viewed as an element in $\Hom(\mathbb{C}^{m_2},\mathbb{C}^{qm_2-m_1})^{\oplus q}$ hence an element $\overline{V}$ in $\Rep(Q(q),(m_2,qm_2-m_1))$, with an ambiguity caused by choosing basis of $C(V)$.  The semistability of $\overline{V}$ is stated by Lemma 19 in \cite{Dr1}.  

To be more precise, let $e_1,\cdots,e_q$ be a basis of $\mathbb{C}^q$, let $V$ be represented by $m_1\times m_2$ matrices $A_1,\cdots,A_q$ and let $\overline{V}$ be represented by $m_2\times (qm_2-m_1)$ matrices $\widetilde{A}_1,\cdots,\widetilde{A}_q$.  Then we have
$\exists ~P\in GL(qm_2)$ such that
\begin{equation}\label{compare1}\begin{pmatrix}A_1,&\cdots, &A_q\\ e_1\bi_{m_2}, & \cdots,&e_q\bi_{m_2}\end{pmatrix}\cdot P=\begin{pmatrix}\bi_{m_1},&\mathbf{0}_{m_1\times (qm_2-m_1)}\\ *, & \Sigma_{i=1}^qe_i\cdot \widetilde{A}_i\end{pmatrix},
\end{equation}
where $\bi_{m}$ is the $m\times m$ identity matrix, $\mathbf{0}_{m\times l}$ is the $m\times l$ zero matrix and $*$ stands for any matrix with compatible order.  Easy to see relation in (\ref{compare1}) defines a map $\tilde{g}$ sending $GL(Q(q),(m_1,m_2))$-orbits in $\Rep(Q(q),(m_1,m_2))^{ss}$ to $GL(Q(q),(m_2,qm_2-m_1))$-orbits in $\Rep(Q(q),(m_2,qm_2-m_1))^{ss}$.%$\forall (g_1,g_2)\in GL(Q(q),(m_1,m_2)),$ $\exists (\bar{g}_1,\bar{g}_2)\in GL(Q(q),(m_2,qm_2-m_1))$ such that $(g_1,g_2)\circ V=(\bar{g}_1,\bar{g}_2)\circ\overline{V} $.

Analogously, for any $\overline{V}\in \Rep(Q(q),(m_2,qm_2-m_1))^{ss}$, we get an element in $\Hom(\mathbb{C}^{m_2},\mathbb{C}^{qm_2-m_1})^{\oplus q}$ which  can be viewed as a map $f_{\overline{V}}:\mathbb{C}^q\times\mathbb{C}^{m_2}\ra \mathbb{C}^{qm_2-m_1}$.  
$f_{\overline{V}}$ has to be surjective by semistability of $\overline{V}$.  We define the inverse image $V=g^{-1}(\overline{V})$ to be the map $\ker(f_{\overline{V}})\hookrightarrow \mathbb{C}^q\times\mathbb{C}^{m_2}$.  It is easy to see that the relation inverse to (\ref{compare1}) is as follows.
 \begin{equation}\label{compare11}\widetilde{P}\cdot\begin{pmatrix}\widetilde{A}_1,&e_1\bi_{m_2}\\ \vdots &\vdots\\ \widetilde{A}_q,& e_q\bi_{m_2}\end{pmatrix}=\begin{pmatrix}\bi_{(qm_2-m_1)},&*\\ \mathbf{0}_{m_1\times (qm_2-m_1)}, & \Sigma_{i=1}^qe_i\cdot A_i\end{pmatrix},
\end{equation} 
where $\widetilde{P}\in GL(qm_2)$.

In particular, if $c_1=0$, then $\mu=0$ and the associated exceptional slope to $\mu$ is also $0$.  Moreover the
exceptional sequence associated to slop 0 can be taken as $(\mo_{\p^2}(-2),\mo_{\p^2}(-1),\mo_{\p^2})$.  By direct computation we have $M(r,0,n)$ is of height zero iff $n=r$, and for this case $q=3$, $m_1=r$ and $m_2=2r$.  We have a quiver specified as follows
\begin{equation}\label{spequiver}Q:=Q(3):\quad\xymatrix@C=1cm{x_{-2}\ar[r]^{y}\ar@/^1pc/[r]^{x}\ar@/_1pc/[r]^{z}&x_{-1},}\end{equation}
By Theorem \ref{hzero2} the moduli space $M(r,0,r)\xrightarrow[\cong]{f} M(Q,(r,2r))$.  By Theorem \ref{hzero3} we have an isomorphism $M(Q,(r,r))\xrightarrow[\cong]{g} M(Q,(r,2r))$. %\cong M(Q,(2r,5r))$

%%%%%%%%%%%%%%%%%.               Section  5. Moduli space of 1-dimensional semi-stable sheaves.       %%%%%%%%%
\section{Moduli spaces of 1-dimensional semi-stable sheaves.}
Recall that $\md$ is the moduli space of 1-dimensional semistable sheaves on $\p^2$ with determinant $rH$ and Euler characteristic 0.  We will see in this section that there are two birational maps $\Psi: \md\dashrightarrow M(Q,(r,r))$ and $\Phi: \md\dashrightarrow M(r,0,r)$ such that the following diagram commutes
\begin{equation}\label{comdia1}\xymatrix@C=1cm{\md\ar@{-->}[r]^{\Psi\quad}\ar@{-->}[d]_{\Phi}& M(Q,(r,r))\ar[d]^{g}_{\cong}\\ M(r,0,r)\ar[r]_{f\quad}^{\cong\quad}&M(Q,(r,2r)),}
\end{equation}
where $f,~g$ are defined at the end of the previous section.

\subsection{The map $\Psi$ in (\ref{comdia1}).}%%%%.     Subsection 5.1. map \Psi.   %%%%%%
We list some properties of $\md$ as the following proposition, the proof of which can be found in \cite{LP1}, \cite{Da2}, \cite{Yuan1}, and \cite{Yuan2}.
\begin{prop}\label{promd}\begin{enumerate}
\item $\md$ is a good quotient of a smooth quasi-projective variety, hence it is normal and Cohen-Macaulay.  $\md$ is irreducible (Theorem 3.1 in \cite{LP1}). 
\item There is a line bundle $\z_r$ over $\md$ (the determinant line bundle associated to $[\mo_{\p^2}]$ on $\md$), such that $dim~H^0(\z_r)=1$.  For $r=1,2$, $\z_r\cong\mo_{\md}$.  For $r\geq3$, the line bundle $\z_r$ admits a unique divisor $D_{\z_r}$ which consists of sheaves with non trivial global sections.(see \cite{Da2} or Theorem 4.3.1 in \cite{Yuan1})
\item Let $\mf$ be a 1-dimensional sheaf with determinant $rH~(r>0)$ and $\chi(\mf)=0$ on $\p^2$.  If $H^0(\mf)=0$, then $\mf$ is semistable and lies in the following sequence 
\begin{equation}\label{1dim1}
0\ra \mo_{\p^2}(-2)^{\oplus r}\ra\mo_{\p^2}(-1)^{\oplus r}\ra \mf\ra 0.
\end{equation}
%(Proposition 3.14 in \cite{LP1}Lemma 2.2 in \cite{Yuan2})
\end{enumerate}
\end{prop}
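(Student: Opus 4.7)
My plan is to treat the three assertions as a compilation from Le Potier, Danila, and prior work of the author, but to expose the logic I would prove part~(3) first, since (1) and (2) rely on it.

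For part~(3), suppose $\mf$ is $1$-dimensional of determinant $rH$ with $\chi(\mf)=0$ and $H^0(\mf)=0$. Purity of $\mf$ is immediate: any $0$-dimensional torsion subsheaf would contribute to $H^0$. A multiplication argument (composing any nonzero $\mo\to\mf(k)$ with a generic linear form not annihilating the image) shows $H^0(\mf(-1))=H^0(\mf(-2))=0$. Combined with $H^{\ge 2}(\mf\otimes L)=0$ for every line bundle $L$ (since $\mathrm{supp}(\mf)$ is $1$-dimensional) and Riemann--Roch $\chi(\mf(k))=kr$, this yields $h^1(\mf(-1))=r$ and $h^1(\mf(-2))=2r$. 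Castelnuovo--Mumford $1$-regularity of $\mf$ follows from $H^1(\mf)=H^2(\mf(-1))=0$, which in turn gives $h^1(\mf(k))=0$ for $k\ge 0$ and hence $h^0(\mf(1))=r$. The Beilinson spectral sequence for $\mf$ on $\p^2$ with respect to the full strong exceptional collection $(\mo(-2),\mo(-1),\mo)$ then has $E_1$-page supported only in the cells $(p,q)=(-2,1)$ and $(-1,1)$, and after identifying $H^1(\mf\otimes\Omega^1(1))\cong H^0(\mf(1))$ via the twisted Euler sequence, it collapses to yield the claimed short exact sequence $0\to\mo(-2)^r\to\mo(-1)^r\to\mf\to 0$. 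Semistability of $\mf$ is then forced by the resolution: any destabilizing pure $1$-dimensional subsheaf $\mf'\subset\mf$ with $\chi(\mf')/\deg(\mf')>0$ would satisfy $H^0(\mf')\ne 0$ by Riemann--Roch for pure sheaves, contradicting $H^0(\mf)=0$.

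For part~(1), I would present $\md$ as a GIT quotient of an open subscheme $\mathcal{Q}^{ss}$ of a Quot scheme $\mathrm{Quot}_{\p^2}(\mo(-m)^N,P)$ for $m\gg 0$ and $P$ the Hilbert polynomial of class $u_r$. Smoothness of $\mathcal{Q}^{ss}$ at Gieseker semistable points follows from the vanishing of the obstruction $\Ext^2(K,\mf)$, where $K$ is the kernel of $\mo(-m)^N\twoheadrightarrow\mf$, once $m$ is large enough; Boutot's theorem then ensures that the good quotient is normal and Cohen--Macaulay. Irreducibility is Le Potier's Theorem~3.1 in \cite{LP1}: the resolution from part~(3) exhibits a dense open subscheme of $\md$ as the image of a morphism from an open subset of the irreducible affine space $\Hom(\mo(-2)^r,\mo(-1)^r)$.

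For part~(2), the line bundle $\z_r=\lambda_{u_r}([\mo_{\p^2}])$ is the Donaldson--Mumford determinant associated to $[\mo_{\p^2}]$, built from the universal quotient on $\mathcal{Q}^{ss}$ and descended to $\md$ because the orthogonality condition $\chi(\mo_{\p^2}\otimes\mf)=0$ kills the character of the stabilizer. Its canonical theta section has vanishing locus $\{\mf:H^0(\mf)\ne 0\}$. For $r=1,2$, the resolution from part~(3), together with a direct inspection of the short list of strictly semistable classes supported on a line or a conic, forces $H^0(\mf)=0$ for every semistable $\mf$, so the section is nowhere vanishing and $\z_r\cong\mo_{\md}$. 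For $r\ge 3$ the locus is a nonempty divisor, and the claims $h^0(\z_r)=1$ and uniqueness of the divisor follow from \cite{Da2} and Theorem~4.3.1 of \cite{Yuan1}, leveraging the normality and irreducibility from part~(1). The principal obstacle throughout is the Beilinson analysis in part~(3): identifying $H^0(\mf\otimes\Omega^1(1))$ as an $r$-dimensional space via the twisted Euler sequence and verifying that the $E_1$-differential yields a two-term complex quasi-isomorphic to $\mf$. Once (3) is in hand, parts (1) and (2) follow smoothly.
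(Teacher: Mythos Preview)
Your argument is correct, but for part~(3) you take a genuinely different route from the paper. The paper invokes a structural lemma (Lemma~2.2 of \cite{Yuan2}) asserting that every pure $1$-dimensional sheaf $\mf$ on $\p^2$ sits in an exact sequence $0\to\me_{\mf}(-1)\to\me_{\mf}\to\mf\to 0$ with $\me_{\mf}=\bigoplus_i\mo(a_i)$; the numerical constraints $k=r$, $\sum a_i=-r$ together with $H^0(\mf)=0$ then force all $a_i=-1$. You instead run the Beilinson spectral sequence directly, computing the relevant cohomology groups from $H^0(\mf)=0$, Riemann--Roch, and the twisted Euler sequence. Your approach is more self-contained (no external lemma needed) and makes the appearance of the specific resolution transparent; the paper's approach is shorter once the cited lemma is granted, and has the mild advantage that semistability drops out immediately from $\chi(\mf')\le 0$ without needing the resolution at all. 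Both arguments establish purity and semistability in essentially the same way. One small slip: in your closing paragraph you write ``identifying $H^0(\mf\otimes\Omega^1(1))$ as an $r$-dimensional space''; this should be $H^1$, as you correctly had it earlier (indeed $H^0(\mf\otimes\Omega^1(1))=0$). For parts~(1) and~(2) the paper simply refers to the literature, so your more detailed sketch goes beyond what the paper itself supplies.
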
 
\begin{proof}We only prove the statement (3).  Since $H^0(\mf)=0$, $\mf$ contains no subsheaf of dimension 0.  By Lemma 2.2 in \cite{Yuan2}, every 1-dimensional pure sheaf $\mf$ lies in a sequence 
\[0\ra\me_{\mf}\otimes\mo_{\p^2}(-1)\ra\me_{\mf}\ra\mf\ra0,\]
where $\me_{\mf}$ is a direct sum of line bundles.  Write $\me_{\mf}=\oplus_{i=1}^k\mo_{\p^2}(a_i)$ with $a_1\leq \cdots\leq a_k$.  If $\mf$ is of determinant $rH$ and Euler characteristic 0, then $k=r$ and $\Sigma_{i=1}^k a_k=-r$.  If moreover $H^0(\mf)=0$ then for every subsheaf $\mf'\subset\mf$, $H^0(\mf')=0$ and $\chi(\mf')=h^0(\mf')-h^1(\mf')\leq 0$.  Hence $\mf$ is semistable and $\Sigma_{i=1}^r a_i=-r$ with $a_i\leq -1$ for all $1\leq i\leq r$.  Therefore $a_1=\cdots=a_r=-1$ and $\me_{\mf}\cong\mo_{\p^2}(-1)^{\oplus r}$.
\end{proof}

Let $U(rH,0):=\md\setminus D_{\z_r}$.  Then by Proposition \ref{promd} (3) we have a map $\Psi:U(rH,0)\ra M(Q,(r,r))$.  Easy to see that $\Psi$ is injective and $\Psi(\mf)$ is stable iff $\mf$ is.  On the other hand, a point $[V]\in M(Q,(r,r))$ which can be represented by a representation $V$ of $Q$ lies in the image of $\Psi$ if and only if $\det(x\cdot V(x)+y\cdot V(y)+z\cdot V(z))\neq 0$, in other words the map $\mo_{\p^2}(-2)^{\oplus r}\xrightarrow{x\cdot V(x)+y\cdot V(y)+z\cdot V(z)}\mo_{\p^2}(-1)^{\oplus r}$ induced by $V$ is injective.  Still a priori we don't know whether $\Psi$ is dominant. 

\subsection{Fourier transform on $\p^2$ and the map $\Phi$ in (\ref{comdia1}).}\label{FT} %%%%.    Subsection.  5.2  Fourier transform on P2      %%%%%%%%%
We recall the Fourier transform on $\p^2$ (see also Section 4 in \cite{LP1} or Section 3 in \cite{Yuan5}).  Let $\mathcal{D}$ be the universal curve in $\p^2\times |H|$ as follows.
\begin{equation}\label{original}\xymatrix@C=0.01cm{
  \p^2\times |H|\ar[d]^{\tilde{p}}\ar[rd]^{\tilde{q}}&~~\mathcal{D}\ar@{_{(}->}[l]\ar[d]^q\ar[r]^p
                & \p^2 \\
            \p^2   &~~~~~~ |H|\cong\p^2&
               }.
\end{equation}

Let $\mf$ be a pure 1-dimensional sheaf of class $u_d$, then its Fourier transform is defined to be $\mg_{\mf}:=q_{*}(p^{*}(\mf\otimes\mo_{\p^2}(2)))\otimes\mo_{|H|}(-1)$.  Let $\mg$ be a torsion-free sheaf on $|H|$ of class $c_n^r$, then its Fourier transform is defined to be $\mf_{\mg}:=R^1p_{*}(q^{*}(\mg\otimes\mo_{|H|}(-1)))\otimes\mo_{\p^2}(-1)$.  We can identify $|H|$ with $\p^2$.  Then although these two Fourier transforms in general need not be the inverse to each other, they provide a birational map as follows.
\begin{equation}\label{bcor}\Phi:\md \dashrightarrow M(r,0,r).\end{equation}

By Lemma 4.2 and Corollary 4.3 in \cite{LP1}, $\Phi$ is well-defined over $U(rH,0)$ and induces an isomorphism to its image $V(r,0,r):=\Phi(U(rH,0))$.  Since both $\md$ and $M(r,0,r)$ are normal and irreducible, by Zariski's main theorem $\Phi$ can be well-defined outside a subset of codimension at least 2.  By Lemma A.2 and Lemma A.3 in \cite{Yuan5}, the Fourier transform $\mg_{\mf}$ is semistable if $\mf$ is in the following subset $\widetilde{U}(rH,0)$ with complement of codimension $\geq 2$ in $\md$
$$\widetilde{U}(rH,0):=\big\{\mf\in\md\big|\begin{array}{c}Supp(\mf) \text{ is integral, and } \\ h^0(\mf)=h^1(\mf)\leq 1.\end{array}\big\}.$$

For every pure 1-dimensional sheaf $\mf$ of class $u_d$, define its \emph{D-dual} $\mf^D:=\E xt^1(\mf,K_{\p^2})$.  Then $\mf^D$ is also a 1-dimensional pure sheaf of class $u_d$.  By Corollary A.5 in \cite{Yuan5}, we have an isomorphism $\kappa:\mdh\ra\mdh$ by sending each $\mf$ to $\mf^D$.
% and moreover $\kappa^{*}\lambda_d(c^r_n)\cong \lambda_d(c^r_n)$.  

\begin{lemma}\label{FTcom}Let $\mg$ be a torsion-free sheaf of class $c^r_n~(r\leq n)$ such that $\mg|_{\ell}\cong\mo_{\ell}^{\oplus r}$ for a generic line $\ell\in |\mo_{|H|}(1)|$.  Then its Fourier transform $\mf_{\mg}$ is purely 1-dimensional of class $u_n$.  If moreover $\mg$ is locally free with $\mg^{\vee}$ its dual, then $\mf_{\mg^{\vee}}\cong \mf_{\mg}^{D}.$ 
\end{lemma}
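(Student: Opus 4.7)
The strategy is to produce an explicit resolution of $\mf_\mg$ by line bundles via a Beilinson-type construction on the incidence variety $\mathcal{D}$, and then to identify both $\mf_{\mg^\vee}$ and $\mf_\mg^D$ with the cokernel of the \emph{transposed} resolution by means of Serre duality on $|H|$.

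First I would apply the Koszul resolution of $\mathcal{D}\subset\p^2\times|H|$ (given by $\mo\to\mo(-1)\boxtimes\mo_{|H|}(-1)[1]$), tensor with $\tilde q^{*}(\mg(-1))$, and push down along $\tilde p$. By the projection formula and Künneth this produces a long exact sequence whose terms are free sheaves $\mo_{\p^2}(a)\otimes H^i(|H|,\mg(b))$ for $(a,b)\in\{(-1,-2),(0,-1)\}$. The hypothesis $\mg|_\ell\cong\mo_\ell^{\oplus r}$ on a generic line $\ell$ gives $H^0(\mg(-1))=H^0(\mg(-2))=0$ (any section would restrict on $\ell$ to a nonzero map $\mo_\ell\to\mo_\ell(-k)^{\oplus r}$ with $k\geq 1$), and, by Serre duality on $|H|$, also $H^2(\mg(-1))=H^2(\mg(-2))=0$ (which become $\Hom(\mg,\mo_{|H|}(-k))=0$, $k=1,2$). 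Riemann--Roch then forces $h^1(\mg(-1))=h^1(\mg(-2))=n$, and the long exact sequence collapses, after twisting by $\mo_{\p^2}(-1)$, to
$$(\ast)\qquad 0\to\mo_{\p^2}(-2)^{\oplus n}\xrightarrow{M_\mg}\mo_{\p^2}(-1)^{\oplus n}\to\mf_\mg\to 0.$$
Generic injectivity of $M_\mg$ is automatic since its cokernel is supported on the curve of jumping lines of $\mg$, a proper subvariety of $|H|^{\vee}\cong\p^2$; this also forces the $p_{*}$-term to vanish. From $(\ast)$ one reads off $c_1(\mf_\mg)=nH$ and $\chi(\mf_\mg)=0$, so $\mf_\mg$ is pure one-dimensional of class $u_n$.

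For the second claim, $(\ast)$ applied to $\mg^\vee$ (which also satisfies the hypotheses, since $\mg^\vee|_\ell\cong\mo_\ell^{\oplus r}$ for generic $\ell$) gives a resolution of $\mf_{\mg^\vee}$ determined by some matrix $M_{\mg^\vee}$. On the other hand, applying $R\Hom(-,\omega_{\p^2})$ to $(\ast)$ and using purity of $\mf_\mg$ (which kills $\Hom(\mf_\mg,\omega_{\p^2})$) yields
$$0\to\mo_{\p^2}(-2)^{\oplus n}\xrightarrow{M_\mg^{t}}\mo_{\p^2}(-1)^{\oplus n}\to\mf_\mg^D\to 0,$$
with $M_\mg^{t}$ literally the transpose of $M_\mg$. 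It thus remains to show $M_{\mg^\vee}=M_\mg^{t}$. Tracing the Beilinson construction identifies $M_\mg$ with the cup-product map
$$\mu_\mg:H^0(|H|,\mo_{|H|}(1))\otimes H^1(\mg(-2))\to H^1(\mg(-1)),$$
viewed as a $\p^2$-linear map between free sheaves, and similarly $M_{\mg^\vee}$ is $\mu_{\mg^\vee}$. Serre duality on $|H|$ supplies canonical isomorphisms $H^1(\mg^\vee(-2))\cong H^1(\mg(-1))^\vee$ and $H^1(\mg^\vee(-1))\cong H^1(\mg(-2))^\vee$, and the standard compatibility of cup product with Serre duality shows $\mu_{\mg^\vee}=\mu_\mg^{t}$; hence $\mf_{\mg^\vee}\cong\mf_\mg^D$.

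The main obstacle will be this last tracing step: one must verify that the connecting morphism of the Beilinson spectral sequence really is the cup product with $H^0(\mo_{|H|}(1))$, and that cup product is compatible with Serre duality on the nose. Both facts are standard, but careful bookkeeping of signs and of the dualising identifications is where one must be most cautious.
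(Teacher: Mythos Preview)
Your argument is correct, but it takes a different path from the paper's.

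For purity and the class computation, the paper does not produce the Beilinson resolution $(\ast)$. Instead it picks an arbitrary locally free resolution $0\to\mk\to\mo_{|H|}(-m)^{\oplus N}\to\mg\to0$ with $m\gg0$, pushes forward, and observes that $\mf_\mg\otimes\mo(1)$ sits as a quotient in a short exact sequence of sheaves whose first two terms are locally free; hence $\mf_\mg$ has homological dimension $\leq 1$ and is pure. The case $n>r$ is handled by the trick $\mf_\mg\cong\mf_{\mg\oplus\mo^{\oplus n-r}}$. Your approach yields more: the explicit two–term resolution by $\mo(-2)^n$ and $\mo(-1)^n$, which is exactly the presentation used later in the paper (Proposition~5.1(3) and~\S5.3) to link $\mdh$ with $M(Q,(d,d))$. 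So your extra work is not wasted.

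For the duality statement the two arguments are genuinely different. The paper applies Grothendieck duality for the projection $p:\mathcal{D}\to\p^2$ directly: since $\omega_{\mathcal{D}/\p^2}\cong p^*\mo_{\p^2}(1)\otimes q^*\mo_{|H|}(-2)$, one line gives
\[
R^1p_*\bigl(q^*(\mg^\vee(-1))\bigr)\otimes\mo_{\p^2}(1)\;\cong\;\E xt^1\bigl(R^1p_*(q^*\mg(-1)),\mo_{\p^2}\bigr),
\]
which unwinds to $\mf_{\mg^\vee}\cong\mf_\mg^D$. This completely bypasses the matrix bookkeeping and the sign issues you flag. Your route---dualising $(\ast)$ and matching $M_{\mg^\vee}$ with $M_\mg^t$ via Serre duality on $|H|$---is more hands-on; it works, and indeed the compatibility of multiplication $H^1(\mg(-2))\xrightarrow{y\cdot}H^1(\mg(-1))$ with Serre duality is exactly what is required, but note that the identification only pins down $M_{\mg^\vee}$ up to $GL_n(\bc)\times GL_n(\bc)$ change of bases, which is harmless for the cokernel. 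The paper's Grothendieck-duality argument is shorter and avoids this bookkeeping; your argument has the virtue of making the link to the quiver picture (transpose of the defining matrix) completely transparent.
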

\begin{proof}At first let $n=r$.  By definition $\mf_{\mg}:=R^1p_{*}(q^{*}(\mg\otimes\mo_{|H|}(-1)))\otimes\mo_{\p^2}(-1)$.  Since $\mg|_{\ell}\cong\mo_{\ell}^{\oplus r}$ for a generic line $\ell\in |\mo_{|H|}(1)|$, we know that $\mg$ is $\mu$-semistable and $p_{*}(q^{*}(\mg\otimes\mo_{|H|}(-1)))=0$.  We take a locally free resolution of $\mg$ as follows.
\begin{equation}\label{resg1}0\ra\mk\ra\mo_{\p^2}(-m)^{\oplus h^0(\mg(m))}\ra \mg\ra0,
\end{equation} 
with $m\gg0$.  Then we have 
\begin{equation}\label{resg2}0\ra R^1p_{*}(q^{*}(\mk\otimes\mo_{|H|}(-1)))\ra R^1p_{*}(q^{*}\mo_{|H|}(-m-1)^{\oplus h^0(\mg(m))})\ra \mf_{\mg}\otimes\mo_{\p^2}(1)\ra0,
\end{equation} 
where $ R^1p_{*}(q^{*}(\mk\otimes\mo_{|H|}(-1)))$ and $R^1p_{*}(q^{*}\mo_{|H|}(-m-1)^{\oplus h^0(\mg(m))})$ are locally free.  Hence $\mf_{\mg}$ is of homological dimension 1 and hence pure.  The class of $\mf_{\mg}$ in $K(\p^2)$ only depends on the class of $\mg$ in $K(|H|)$.  

If $n>r$, then $\mg\oplus\mo_{|H|}^{\oplus n-r}$ is torsion free of class $c_n^n$ and $\mf_{\mg}\cong\mf_{\mg\oplus\mo_{|H|}^{\oplus n-r}}$ since $p_{*}(q^{*}\mo_{|H|}(-1))=R^1p_{*}(q^{*}\mo_{|H|}(-1))=0$.
 
By Grothendieck duality (or Lemma 5.5 in \cite{Abe}), for $\mg$ locally free we have 
$$R^1p_{*}(\hh om(q^*(\mg\otimes\mo_{|H|}(-1)), \omega_{\mathcal{D}/\p^2}))\cong\E xt^1(R^1p_*(q^*(\mg\otimes\mo_{|H|}(-1))),\mo_{\p^2}),$$ where $\omega_{\mathcal{D}/\p^2}$ is the relative dualizing sheaf of the map $p$.  Since $\omega_{\mathcal{D}/\p^2}\cong p^*{\mo_{\p^2}(1)}\otimes q^*\mo_{|H|}(-2)$, we have 
\scriptsize
$$\mf_{\mg^{\vee}}=R^1p_{*}(q^*(\mg^{\vee}\otimes\mo_{|H|}(-1)))\otimes\mo_{\p^2}(-1)\cong\E xt^1(R^1p_*(q^*(\mg\otimes\mo_{|H|}(-1)))\otimes\mo_{\p^2}(-1),K_{\p^2})=\mf_{\mg}^D.$$
\normalsize\end{proof}

Denote by $M(r,0,n)^{b}$ the subset of $M(r,0,n)$ consisting of locally free sheaves.  It is easy to find that $M(r,0,n)\setminus M(r,0,n)^b$ is of codimension $\geq r-1$ in $M(r,0,n)$ (see Proposition 2.8 in \cite{DLP}).  There is a birational map $\zeta:M(r,0,n)\dashrightarrow M(r,0,n)$ sending each $\mu$-stable bundle to its dual.  Since $M(r,0,n)$ is normal, by Zariski's main theorem $\zeta$ can be well-defined outside a subset of codimension $\geq2$.  If $r=2$, then $\zeta$ is just the identity.  If $n>r\geq 3$, by Lemma 2.10 in \cite{Yuan7} strictly $\mu$-semistable sheaves form a closed subset of codimension $\geq2$.  

However if $n=r\geq 3$, then by Proposition 3.1 in \cite{Yuan7} there is a divisor $\textbf{S}_r$ consisting of strictly $\mu$-semistable sheaves.  For every $\mg\in\textbf{S}_r\cap \wrr$, the dual bundle $\mg^{\vee}$ can not be semistable since $H^0(\mg^{\vee})\cong \Hom(\mg,\mo_{\p^2})\neq0$.  Define $V(r,0,r)^b:=V(r,0,r)\cap M(r,0,r)^b$, then $M(r,0,r)\setminus V(r,0,r)^b$ is of codimension $\geq 2$ by Lemma A.3 in \cite{Yuan5}.  The following lemma is a direct consequence of Lemma \ref{FTcom}.
\begin{lemma}\label{pfmain2} $\zeta$ can be well-defined over $V(r,0,r)^b$ and we have the following commutative diagram
\[\xymatrix{V(r,0,r)^b\ar[r]^{\zeta}_{\cong}& V(r,0,r)^b\\ U(rH,0)^b\ar[u]^{\Phi}_{\cong}\ar[r]^{\kappa}_{\cong}&U(rH,0)^b\ar[u]_{\Phi}^{\cong}},\]
where $U(rH,0)^b:=\Phi^{-1}(V(r,0,r)^b)$.
%\item $\zeta^*\lambda_{c^r_r}(d)\cong\lambda_{c^r_r}(d)$.
\end{lemma}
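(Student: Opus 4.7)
The plan is to chase the identity $\mf_{\mg^\vee} \cong \mf_\mg^D$ of Lemma \ref{FTcom} through the birational inverse of the Fourier transform, obtaining the well-definedness of $\zeta$, the preservation of $U(rH,0)^b$ under $\kappa$, and the commutativity of the diagram all at once.

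Starting with $\mf \in U(rH,0)^b$, set $\mg := \Phi(\mf) \in V(r,0,r)^b$. By construction $\mg$ is locally free and Gieseker-semistable of class $c^r_r$, hence $\mu$-semistable, so a Grauert--M\"ulich type argument forces $\mg|_\ell \cong \mo_\ell^{\oplus r}$ on a generic line $\ell$. Lemma \ref{FTcom} then gives
\[\mf_{\mg^\vee} \cong \mf_\mg^D.\]
Since $\Phi$ is the birational inverse of $\mg \mapsto \mf_\mg$ and is an isomorphism over $V(r,0,r)^b$, one has $\mf_\mg \cong \mf$, whence
\[\mf_{\mg^\vee} \cong \mf^D = \kappa(\mf).\]
Next I would verify that $\kappa(\mf) \in U(rH,0)^b$. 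Semistability of $\kappa(\mf)$ is Corollary A.5 in \cite{Yuan5}, and the condition $h^0(\kappa(\mf)) = 0$ follows from the local-to-global Ext spectral sequence and Serre duality: $H^0(\mf^D) \cong \Ext^1(\mf, K_{\p^2}) \cong H^1(\mf)^\vee$, which vanishes because $h^0(\mf) = 0$ together with $\chi(\mf) = 0$ forces $h^1(\mf) = 0$. Thus $\Phi$ is defined at $\kappa(\mf)$; passing the display $\mf_{\mg^\vee} \cong \kappa(\mf)$ through the inverse transform $\mg_{(-)}$ yields $\mg^\vee \cong \mg_{\kappa(\mf)} = \Phi(\kappa(\mf))$. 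In particular $\mg^\vee$ is locally free and semistable, so it lies in $V(r,0,r)^b$, $\zeta$ extends over $\mg$ with $\zeta(\mg) = \mg^\vee$, and the commutativity $\zeta \circ \Phi = \Phi \circ \kappa$ can be read off. Both horizontal maps are then isomorphisms via $\mg^{\vee\vee} \cong \mg$ and $\kappa \circ \kappa = \mathrm{id}$.

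The main conceptual step is the inversion $\mf_{\mg^\vee} \cong \kappa(\mf) \Rightarrow \mg^\vee \cong \Phi(\kappa(\mf))$: because the two Fourier transforms on $\p^2$ and $|H|$ are only birational inverses, one needs to check that both $\mg^\vee$ and $\kappa(\mf)$ lie in the common open on which the inverse transform is regular. This is precisely what Lemma \ref{FTcom} secures, as it certifies $\mf_{\mg^\vee}$ as purely $1$-dimensional of class $u_r$ and as the Fourier transform of the locally free sheaf $\mg^\vee$, which is exactly the regime where $\mg_{(-)}$ recovers the original sheaf unambiguously. Modulo this compatibility the lemma is indeed a direct consequence of Lemma \ref{FTcom}.
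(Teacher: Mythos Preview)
Your proposal is correct and follows exactly the approach the paper intends: the paper's entire proof is the single phrase ``a direct consequence of Lemma~\ref{FTcom},'' and you have spelled out that deduction in reasonable detail. The one point you flag as delicate---inverting $\mf_{\mg^\vee}\cong\kappa(\mf)$ to $\mg^\vee\cong\Phi(\kappa(\mf))$---is indeed the crux, and your justification (that $\mg^\vee$ is locally free with trivial generic splitting, so the two transforms compose to the identity on it) is the intended one; if you want to make it airtight you can note that $\mg^\vee$, being the dual of a $\mu$-semistable bundle, is itself $\mu$-semistable, which is precisely the vanishing input needed for $\mg_{(-)}\circ\mf_{(-)}=\mathrm{id}$ at $\mg^\vee$.
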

\begin{rem}Let $\mg$ be a stable bundle in $\textbf{S}_r$ lying in the following exact sequence 
\[0\ra\mg_1\ra\mg\ra\mo_{\p^2}\ra0.\]
Then $\zeta(\mg)$ lies in the following exact sequence
\[0\ra\mg_1^{\vee}\ra\zeta(\mg)\ra\mo_{\p^2}\ra 0.\]
\end{rem}
%Define $V(r,0,r)^b:=V(r,0,r)\cap M(r,0,r)^b$ and $V(Q,(r,2r))^b:=f(V(r,0,r)^b)$.  Then when $r=n\geq3$ we can replace $M(r,0,r)$ in (\ref{sdmap}) by $V(r,0,r)^b$.  

\subsection{Commutativity of the diagram in (\ref{comdia1}).}
%%%%%%%.     Section  5.3   Commutativity of the diagram. %%%%%%
\begin{prop}\label{comprop}The diagram in (\ref{comdia1}) commutes.
\end{prop}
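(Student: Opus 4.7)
The plan is to verify the commutativity of (\ref{comdia1}) by computing, for $\mf$ in a dense open subset of $\md$, both $f\circ\Phi(\mf)$ and $g\circ\Psi(\mf)$ as explicit representations in $M(Q,(r,2r))$ and checking that they coincide. Since all four maps are morphisms on dense open subsets of the irreducible, normal varieties $\md$ and $M(r,0,r)$, and the target $M(Q,(r,2r))$ is separated, it suffices to establish the identity on any dense open subset where both compositions are regular.

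Start with $\mf\in U(rH,0)$: by Proposition \ref{promd}(3) one has the resolution $0\to\mo(-2)^{\oplus r}\xrightarrow{xA+yB+zC}\mo(-1)^{\oplus r}\to\mf\to 0$, and $\Psi(\mf)$ is the class of $V=(A,B,C)\in\Rep(Q,(r,r))$. To compute $\mg_\mf=\Phi(\mf)$, twist this resolution by $\mo(2)$, pull it back to $\mathcal{D}\subset\p^2\times|H|$, and push forward via $q$. Using $\tilde q_*\mo_{\p^2\times|H|}=\mo_{|H|}$, the Koszul resolution $0\to\mo(-1,-1)\to\mo\to\mo_{\mathcal{D}}\to 0$, and the Euler sequence on $|H|$ to identify $q_*\bigl(p^*\mo_{\p^2}(1)\bigr)\cong T_{|H|}(-1)$, one obtains (after twisting by $\mo_{|H|}(-1)$) the short exact sequence
\[0\to\mo_{|H|}(-1)^{\oplus r}\xrightarrow{\phi} T_{|H|}(-2)^{\oplus r}\to\mg_\mf\to 0,\]
in which $\phi$ factors as $\mo(-1)^{\oplus r}\xrightarrow{F}\mo(-1)^{\oplus 3r}\xrightarrow{\pi}T_{|H|}(-2)^{\oplus r}$, with $F=(A,B,C)^{T}$ the vertical stack (a $3r\times r$ scalar matrix) and $\pi$ the twisted Euler projection whose kernel $\mo(-2)^{\oplus r}\hookrightarrow\mo(-1)^{\oplus 3r}$ is diagonal multiplication by $(u,v,w)$.

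To extract the Beilinson resolution $0\to\mo(-2)^{\oplus r}\to\mo(-1)^{\oplus 2r}\to\mg_\mf\to 0$ defining $f(\mg_\mf)$, observe that the kernel $\tilde K$ of the composed surjection $\mo(-1)^{\oplus 3r}\twoheadrightarrow\mg_\mf$ equals $F(\mo(-1)^{\oplus r})+\ker(\pi)$ and fits in $0\to\mo(-2)^{\oplus r}\to\tilde K\to\mo(-1)^{\oplus r}\to 0$, which splits because $\Ext^1(\mo(-1),\mo(-2))=H^1(\mo_{|H|}(-1))=0$. A splitting is precisely a choice of scalar $P\in GL_{3r}(\mathbb{C})$ whose first $r$ columns are $F$; in the resulting new basis, projecting $\ker(\pi)$ onto the last $2r$ coordinates produces a map $u\tilde A+v\tilde B+w\tilde C$ whose blocks $\tilde A,\tilde B,\tilde C$ are read off the last $2r$ rows of $P^{-1}$.

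Finally, compare with (\ref{compare1}). The condition ``first $r$ columns of $P$ are $F$'' transposes to $(A,B,C)\cdot P'=(\bi_r,\mathbf{0}_{r\times 2r})$ for $P'=(P^{-1})^{T}\in GL_{3r}(\mathbb{C})$, which is exactly the algebraic equation determining $P'$ in (\ref{compare1}); moreover the three row-blocks of the last $2r$ columns of $P'$ are the matrices $\tilde A_i$ defining $\tilde g(V)$, and they agree (as linear maps $\mathbb{C}^r\to\mathbb{C}^{2r}$) with the $\tilde A,\tilde B,\tilde C$ produced above. Therefore $f(\Phi(\mf))=g(\Psi(\mf))$ on the open dense locus, and the diagram commutes. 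The principal technical care lies in this last identification, where the ``$m_1\times m_2$'' (row) convention of Theorem \ref{hzero3} must be reconciled with the natural column convention of the sheaf computation, making the transposition of $P$ necessary; once this is handled, Dr\'ezet's algebraic normal form is seen to encode exactly the splitting of $\tilde K$ used to produce the Beilinson resolution of $\mg_\mf$.
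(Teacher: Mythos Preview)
Your proposal is correct and follows essentially the same route as the paper: restrict to $U(rH,0)$, push the resolution of $\mf$ through the Fourier transform to obtain the Beilinson-type resolution $0\to\mo_{|H|}(-2)^{\oplus r}\to\mo_{|H|}(-1)^{\oplus 2r}\to\mg_\mf\to 0$, and then match the change-of-basis matrix with Dr\'ezet's relation (\ref{compare1}) defining $g$. The paper organizes this computation as the single commutative diagram (\ref{trans2}) and reads off (\ref{compare2}) directly, whereas you phrase the same diagram via the Euler identification $q_*p^*\mo_{\p^2}(1)\cong T_{|H|}(-1)$ and interpret $P$ conceptually as a splitting of the extension $0\to\mo(-2)^{\oplus r}\to\tilde K\to\mo(-1)^{\oplus r}\to 0$; this is the same calculation in slightly different packaging, and your remark on the row/column (transposition) convention makes explicit a bookkeeping point the paper leaves implicit.
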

\begin{proof}
We can restrict ourselves to $U(rH,0)$ where both $\Psi$ and $\Phi$ in (\ref{comdia1}) are well-defined.  So we want to show the following diagram commutes
\begin{equation}\label{comdia2}\xymatrix@C=1cm{U(rH,0)\ar[r]^{\Psi\quad}\ar[d]_{\Phi}& M(Q,(r,r))\ar[d]^{g}_{\cong}\\ M(r,0,r)\ar[r]_{f\quad}^{\cong\quad}&M(Q,(r,2r))}.
\end{equation}
For any $\mf\in U(rH,0)$ we have the following exact sequence as in (\ref{1dim1})
\begin{equation}\label{1dim2}0\ra \mo_{\p^2}(-2)^{\oplus r}\xrightarrow{x\cdot A_x+y\cdot A_y+z\cdot A_z}\mo_{\p^2}(-1)^{\oplus r}\ra \mf\ra 0.
\end{equation}
So $\Psi(\mf)=[\xymatrix@C=1cm{\mathbb{C}^r\ar[r]^{A_y}\ar@/^1pc/[r]^{A_x}\ar@/_1pc/[r]^{A_z}&\mathbb{C}^r}]$.  

On $\p^2\times |H|$, we have 
\begin{equation}\label{trans1}0\ra \tilde{p}^*\mo_{\p^2}(-1)\otimes \tilde{q}^*\mo_{|H|}(-1)\ra\mo_{\p^2\times |H|}\ra\mo_{\mathcal{D}}\ra0.
\end{equation}
Do the Fourier transform to (\ref{1dim2}) and we have
\begin{equation}\label{trans2}\xymatrix@C=0.3cm@R=0.9cm{&&0\ar[d]&0\ar[d]&\\&0\ar[r]\ar[d]&H^0(\mo_{\p^2})^{\oplus r}\otimes\mo_{|H|}(-2)\ar[r]^{\qquad\cong}\ar[d]_{F_2}&\mo_{|H|}(-2)^{\oplus r}\ar[r]\ar[d]_{x^*\cdot \widetilde{A}_x+y^*\cdot\widetilde{A}_y+z^*\cdot \widetilde{A}_z}&0\\
0\ar[r]& H^0(\mo_{\p^2})^{\oplus r}\otimes\mo_{|H|}(-1)\ar[r]^{F_1~}\ar[d]^{\cong} & H^0(\mo_{\p^2}(1))^{\oplus r}\otimes \mo_{|H|}(-1)\ar[r]\ar[d]&\mo_{|H|}(-1)^{\oplus 2r}\ar[r]\ar[d] &0\\
 0\ar[r] & q_{*}(p^*\mo_{\p^2}^{\oplus r})\otimes\mo_{|H|}(-1)\ar[r]\ar[d] & q_{*}(p^*\mo_{\p^2}(1)^{\oplus r})\otimes\mo_{|H|}(-1)\ar[r]\ar[d]&\mg_{\mf}\ar[r]\ar[d]&0\\&0&0&0&}
\end{equation}
where $[x^*,y^*,z^*]$ are the homogenous coordinates on $|H|$.  $f(\mg_{\mf})=f\circ\Phi(\mf)$ can be represented by three $r\times 2r$ matrices $(\widetilde{A}_x,\widetilde{A}_y,\widetilde{A}_z)$.  The map $F_1:\mo_{|H|}(-1)^{\oplus r}\ra\mo_{|H|}(-1)^{\oplus 3r}$ in (\ref{trans2}) is given by the matrix $(A_x,A_y,A_z)\otimes id_{\mo_{|H|}(-1)}$.  The map $F_2:\mo_{|H|}(-2)^{\oplus r}\ra\mo_{|H|}(-1)^{\oplus 3r}$ in (\ref{trans2}) is given by the matrix $(x^*\mathbf{I}_r,y^*\bi_r,x^*\bi_r)$.  On the other hand, by the commutativity of (\ref{trans2}), the map 
$$\left(\begin{array}{c}F_1\\ \oplus \\ F_2\end{array}\right):\begin{array}{c}\mo_{|H|}(-1)^{\oplus r}\\ \oplus\\ \mo_{|H|}(-2)^{\oplus r}\end{array}\ra\mo_{|H|}(-1)^{\oplus 3r}$$
can also be represented by the matrix $\begin{pmatrix}\bi_r,&\mathbf{0}_{r\times 2r}\\ *, & x^*\cdot \widetilde{A}_x+y^*\cdot\widetilde{A}_y+z^*\cdot \widetilde{A}_z\end{pmatrix}$.  Hence we know that $\exists~ P\in GL(3r,\mathbb{C})$ such that
\begin{equation}\label{compare2}\begin{pmatrix}A_x,&A_y, &A_z\\ x^*\bi_r, & y^*\bi_r,&z^*\bi_r\end{pmatrix}\cdot P=\begin{pmatrix}\bi_r,&\mathbf{0}_{r\times 2r}\\ *, & x^*\cdot \widetilde{A}_x+y^*\cdot\widetilde{A}_y+z^*\cdot \widetilde{A}_z\end{pmatrix}.\end{equation}
Compare (\ref{compare2}) with (\ref{compare1}) and we see the commutativity of (\ref{comdia2}).  Hence the proposition.   
\end{proof}

\begin{rem}\label{extend1}For $\mf\in\widetilde{U}(rH,0)$ such that $h^0(\mf)=1$, by Lemma A.2 in \cite{Yuan5}, its Fourier transform $\mg_{\mf}$ is strictly semistable and $S$-equivalent to $S^2\mt_{\p^2}(-1)\oplus\mg_{\mf'}$, where $\mt_{\p^2}$ is the tangent bundle of $\p^2$ and $\mf'\in U((r-3)H,0)$ uniquely determined by $\mf$.  Hence by the Proposition \ref{comprop}, $\Psi(\mf)=\Lambda_3\oplus \Psi(\mf')$ where $\Lambda_3=g^{-1}\circ f(S^2\mt_{\p^2}(-1))\in M(Q,(3,3))$ and it can be represented by matrices $(A^{\Lambda}_{x},A^{\Lambda}_{y},A^{\Lambda}_{z})$ such that $x\cdot A^{\Lambda}_{x}+y\cdot A^{\Lambda}_{y}+z\cdot A^{\Lambda}_{z} =\begin{pmatrix}y,&-z,&0\\-x,&0,&z\\0,&x,&-y\end{pmatrix}$.\end{rem} 

\section{Strange duality on $\p^2$.}%%%%%%.    Section 6.   Strange duality on P2.    %%%%%%
\subsection{The problem.}%%%%%.  Section 6.1  The problem.   %%%%%%
We have two moduli spaces $\mdh~(d>0)$ and $M(r,0,n)~(n\geq r>0)$ parametrizing semistable sheaves of class $u_d$ and $c^r_n$ respectively.  We have the so-called \emph{determinant line bundle} $\lcn$ ($\lambda_{c_n^r}(d)$, resp.) over $\mdh$ ($M(r,0,n)$, resp.) associated to $c^r_n$ ($u_d$, resp.).  We have a \emph{strange duality map} well-defined up to scalars as follows.
\begin{equation}\label{sdmap}SD_{c_n^r,d}:H^0(M(r,0,n),\lambda_{c^r_n}(d))^{\vee}\ra H^0(\mdh,\lcn).
\end{equation}

The map $SD_{c_n^r,d}$ in (\ref{sdmap}) is induced by the the section $\sigma_{\crn,d}$ of the line bundle $\lcd\boxtimes\lcn$ over 
$M(r,0,n)\times\mdh$, whose zero set is 
\begin{equation}\label{sddiv}\mathtt{D}_{\crn,d}:=\big\{(\mg,\mf)\in   M(r,0,n)\times \mdh \bigm| h^0(\p^2,\mg\otimes \mf)=h^1(\p^2,\mg\otimes\mf)\ne 0\big\}.\end{equation}

The strange duality conjecture on $\p^2$ due to Le Potier (Conjecture 2.2 in \cite{Da2}) is as follows.  
\begin{con/que}Is $SD_{\crn,d}$ an isomorphism? 
\end{con/que}
For details of the setting up including the explicit definition of the determinant line bundles, we omit here and refer to  \S1 and \S2 in \cite{Da2}, or \S2 and \S3 in \cite{Yuan1}, or \S2.4 in \cite{GY}, or \S2.3 in \cite{Yuan6}.  For more properties of the determinant line bundle, we refer to Chapter 8 in \cite{HL} noting that the definition in \cite{HL} is dual to us. 

Recall in Proposition \ref{promd} (2), we have introduced the line bundle $\z_d$ over $\mdh$ which is the determinant line bundle associated to the class $[\mo_{\p^2}]\in K(\p^2)$ and $H^0(\z_d)=1$.  Denote by $\theta_d$ the unique non-zero section up to scalars, which vanishes at points corresponding to sheaves with non-trivial global sections.  By the basic property of the determinant line bundles (see e.g. \S2.1 in \cite{GY}, or \S3 in \cite{Yuan1}), we have $\lambda_{d}(c^r_n)\otimes\z_d^{\otimes (n-r)}\cong\lambda_{d}(c^n_n)$ for all $n\geq r$.  We then have the inclusion map
\begin{equation}\label{mzeta}\jmath^r_n:H^0(\mdh,\lambda_{d}(c^r_n))\xrightarrow{.\theta_d^{n-r}}H^0(\mdh,\lambda_d(c^n_n)),
\end{equation}  
defined by multiplying $n-r$ times of the section $\theta_d$. 
\begin{rem}\label{atform}Actually by Proposition 2.8 in \cite{LP2}, we have $\lambda_{d}(c_n^r)\cong\z_d^{\otimes r}\otimes \pi^{*}\mo_{|dH|}(n)$ for all $r,n$, where $\pi:\mdh\ra|dH|$ sends every sheaf to its support.\end{rem}

\begin{rem}\label{dlcom}Recall we have two morphisms: $\kappa:\mdh\ra\mdh$ sending each $\mf$ to $\mf^D$, and $\zeta:V(r,0,r)^b\ra V(r,0,r)^b$ as defined in Lemma \ref{pfmain2}.  By Corollary A.5 and Corollary 3.7, we have $\kappa^*\lambda_d(c^r_n)\cong\lambda_d(c^r_n)$ and $\zeta^*\lambda_{c^r_r}(d)\cong\lambda_{c^r_r}(d)$.
\end{rem}

\subsection{The results.} %%%%%%.   Section 6.2. The results. %%%%%
Our main results are the following two theorems.
\begin{thm}\label{mainthm1}The strange duality map $SD_{c_n^r,d}$ in (\ref{sdmap}) is an isomorphism for $r=n>0$ and $d>0$.
\end{thm}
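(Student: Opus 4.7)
My plan is to reduce Theorem \ref{mainthm1} to the quiver strange duality isomorphism of Theorem \ref{mainquiver}, using the diagram (\ref{comdia1}) to convert both sides of $SD_{c_r^r,d}$ into the quiver setup for $Q = Q(3)$. Take dimension vectors $\alpha = (r,2r)$ and $\beta = (d,d)$. The Euler form computation $\langle (r,2r),(d,d)\rangle = rd + 2rd - 3rd = 0$ verifies orthogonality, so Theorem \ref{mainquiver} yields an isomorphism
\[SD(Q):\ H^0(M(Q,\alpha)_{\sigma_1},\lambda(Q,\alpha)_{\sigma_1})^{\vee} \xrightarrow{\ \cong\ } H^0(M(Q,\beta)_{\sigma_2},\lambda(Q,\beta)_{\sigma_2})\]
with $\sigma_1 = -\langle -,\beta\rangle$ and $\sigma_2 = \langle \alpha,-\rangle$.

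To identify the domain of $SD_{c_r^r,d}$ with that of $SD(Q)$, I would pull back along the isomorphism $f: M(r,0,r) \to M(Q,(r,2r))$ of Theorem \ref{hzero2}. The resolution (\ref{resov}) for the exceptional sequence $(\mo_{\p^2}(-2),\mo_{\p^2}(-1),\mo_{\p^2})$ identifies the two vector spaces of the quiver representation assigned to $\mf \in M(r,0,r)$ with $\Ext^1(\mf,\mo_{\p^2}(-2))^{\vee}$ and $\Ext^1(\ms,\mf)$, and matches the Euler pairing $\chi(\mf\otimes\mg)$ for $\mg\in\mdh$ with the quiver determinant $\det(d^V_W)$ of (\ref{qext1}). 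This should yield $f^{\ast}\lambda(Q,\alpha)_{\sigma_1} \cong \lambda_{c_r^r}(d)$, and should carry the determinantal section defining $\mathtt{D}_{c_r^r,d}$ to the quiver section $\bar c$ of Section \ref{sdqr}.

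For the codomain I would use the composite $\widetilde{\Psi} := g^{-1}\circ f\circ\Phi: \mdh \dashrightarrow M(Q,(d,d))$. By the discussion preceding Lemma \ref{FTcom}, the Fourier transform $\Phi$ is defined outside a subset of codimension $\geq 2$ in the normal variety $\mdh$, and its image has complement of codimension $\geq 2$ in $M(r,0,r)$ by Lemma A.3 in \cite{Yuan5}, while $f$ and $g$ are biregular. Hence $\widetilde{\Psi}$ is a birational isomorphism defined outside a codimension-two subset on both sides, and by Proposition \ref{comprop} it agrees with $\Psi$ on $U(dH,0)$. Using Remark \ref{atform} together with an analogous weight decomposition of $\lambda(Q,\beta)_{\sigma_2}$, I would verify $\widetilde{\Psi}^{\ast}\lambda(Q,\beta)_{\sigma_2} \cong \lambda_d(c_r^r)$, and conclude by normality that $\widetilde{\Psi}^{\ast}$ induces an isomorphism on global sections. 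The matrix identities (\ref{compare1})--(\ref{compare2}) from the proof of Proposition \ref{comprop} then imply that the pulled-back quiver section matches $\sigma_{c_r^r,d}$, so that $SD_{c_r^r,d}$ is conjugate to $SD(Q)$ and is therefore an isomorphism.

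The main obstacle is the third step: while $\Psi$ itself is only defined on $U(dH,0)$, which is the complement of a divisor rather than a codimension-two subset, so cannot by itself be used to transport global sections of line bundles, the detour through $\Phi$ provides the codimension-two control necessary for Hartogs-type extension on normal varieties. Proving that the line bundle on the quiver side pulls back to the expected combination $\z_d^{\otimes r} \otimes \pi^{\ast}\mo_{|dH|}(r)$ from Remark \ref{atform}, and that the determinantal sections correspond compatibly on all of $\mdh$ including the boundary locus described in Remark \ref{extend1}, is the technical crux.
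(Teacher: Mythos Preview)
Your overall architecture is right, and it coincides with the paper's: reduce to Theorem \ref{mainquiver} for $Q=Q(3)$ with $\alpha=(r,2r)$, $\beta=(d,d)$, use $f$ on the $M(r,0,r)$ side, and use the commutative diagram (\ref{comdia1}) together with the codimension-two control coming from the Fourier transform $\Phi$ to pass between global sections on the normal varieties. The place where your proposal and the paper diverge is the claim that $(\Psi,f)^*\bar c$ equals the strange duality section $\sigma_{c^r_r,d}$.

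That claim is false as stated, and this is the genuine gap. The quiver section $\bar c$ is built from $\det(d^V_W)$ in the sequence (\ref{qext1}), so it vanishes exactly when $\Hom_Q(V,W)\neq 0$. Tracing through the resolutions (\ref{resov}) and (\ref{1dim1}) as in the paper's Lemma \ref{pfmain3}, one finds that this corresponds on the sheaf side to $\Hom(\mg,\mf)\neq 0$, i.e.\ to $h^0(\mg^{\vee}\otimes\mf)\neq 0$ for $\mg$ locally free, \emph{not} to $h^0(\mg\otimes\mf)\neq 0$. In other words $(\Psi,f)^*\bar c$ is the section $\varsigma_{c^r_r,u_d}$ of Proposition \ref{vglob}, cutting out $\mathtt{B}_{c^r_r,u_d}$, rather than $\sigma_{c^r_r,d}$ cutting out $\mathtt{D}_{c^r_r,d}$. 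Your assertion that the tilting identification ``matches the Euler pairing $\chi(\mf\otimes\mg)$ with the quiver determinant'' conflates $\chi(\mg\otimes\mf)$ with $\chi(\mg,\mf)=\sum(-1)^i\operatorname{ext}^i(\mg,\mf)$; these differ by the dual on $\mg$.

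The paper repairs this by introducing the variant map $VD_{c^r_r,u_d}$ induced by $\varsigma$, and then proving (Proposition \ref{VDtoSD}) that $VD_{c^r_r,u_d}$ is intertwined with $SD_{u_d,c^r_r}\circ(\kappa^*)^{\vee}$ via a Fourier transform on \emph{both} factors (not only on the $M(dH,0)$ side as you propose), together with the D-dual involution $\kappa$ on $M(rH,0)$. The point of going through $\kappa$ on $M(rH,0)$ rather than the bundle-dual $\zeta$ on $M(r,0,r)$ is that $\kappa$ is an honest automorphism, whereas $\zeta$ is only birational and fails precisely along the divisor $\mathbf{S}_r$ for $r\geq 3$ (cf.\ the discussion before Lemma \ref{pfmain2}), so Hartogs would not apply. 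Once this twist is handled, the commutative diagram (\ref{pfcomdia1}) gives $SD_{c^r_r,d}$ as a composite of isomorphisms. Your argument as written does not account for this dual/twist, and without it the sections do not match.
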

\begin{thm}\label{mainthm2}The strange duality map $SD_{c_n^r,d}$ in (\ref{sdmap}) is injective for all $n\geq r>0$ and $d>0$.
\end{thm}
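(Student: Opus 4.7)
The plan is to bootstrap injectivity of $SD_{\crn,d}$ for arbitrary $n\geq r>0$ from the isomorphism case $n=r$ (Theorem \ref{mainthm1}) via a natural morphism $\iota\colon M(r,0,n)\to M(n,0,n)$ sending the $S$-equivalence class $[\mg]$ to $[\mg\oplus\mo_{\p^2}^{\oplus(n-r)}]$. Because any semistable $\mg$ of class $\crn$ has slope $0$ and Euler characteristic $0$, matching $\mo_{\p^2}$, the direct sum is strictly semistable of class $c^n_n$ and $\iota$ is a well-defined morphism on $S$-equivalence classes.

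Next I would set up a commutative square relating $SD_{\crn,d}$ and $SD_{c^n_n,d}$ through $\iota$ and the injection $\jmath^r_n$ of (\ref{mzeta}). Bi-additivity of determinant line bundles, together with the fact that the constant summand $\mo_{\p^2}^{\oplus(n-r)}$ contributes a trivial factor, gives $\iota^{*}\lambda_{c^n_n}(d)\cong\lambda_{\crn}(d)$. For the universal theta section, the identity
\[H^0\bigl(\mf\otimes(\mg\oplus\mo_{\p^2}^{\oplus(n-r)})\bigr)\cong H^0(\mf\otimes\mg)\oplus H^0(\mf)^{\oplus(n-r)}\]
forces the divisor decomposition
\[(\iota\times\mathrm{id})^{*}\mathtt{D}_{c^n_n,d}=\mathtt{D}_{\crn,d}+(n-r)\bigl(M(r,0,n)\times D_{\z_d}\bigr),\]
so $(\iota\times\mathrm{id})^{*}\sigma_{c^n_n,d}=c\cdot\theta_d^{n-r}\cdot\sigma_{\crn,d}$ for some $c\in\bc^{*}$. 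Passing to induced maps on global sections yields the commutative square $\jmath^r_n\circ SD_{\crn,d}=SD_{c^n_n,d}\circ\iota_{*}$, where $\iota_{*}$ is the linear dual of the pullback $\iota^{*}\colon H^0(M(n,0,n),\lambda_{c^n_n}(d))\to H^0(M(r,0,n),\lambda_{\crn}(d))$.

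Since $SD_{c^n_n,d}$ is an isomorphism by Theorem \ref{mainthm1} and $\jmath^r_n$ is injective (multiplication by the nonzero global section $\theta_d^{n-r}$), the square immediately reduces injectivity of $SD_{\crn,d}$ to injectivity of $\iota_{*}$, equivalently to surjectivity of $\iota^{*}$.

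The hard step is precisely this surjectivity, and it is here that the method of \cite{Yuan7} must be generalized. My strategy is to identify the image of $\iota^{*}$ concretely: by Theorem \ref{mainthm1} combined with Theorem \ref{mainquiver} and the quiver identification $M(n,0,n)\cong M(Q,(n,2n))$ of Theorem \ref{hzero2}, the source of $\iota^{*}$ is spanned by the theta sections $\sigma_{c^n_n,d}(-,\mf)$ as $\mf$ varies in $\mdh$; these pull back via $\iota^{*}$ to $\theta_d(\mf)^{n-r}\cdot\sigma_{\crn,d}(-,\mf)$, so the image of $\iota^{*}$ is exactly the span of $\sigma_{\crn,d}(-,\mf)$ for $\mf$ in the dense open $U(dH,0)\subset\mdh$. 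The remaining task is to show that these restricted theta sections already span $H^0(M(r,0,n),\lambda_{\crn}(d))$. I would parametrize them using the birational quiver description of $\mdh$ from \S5 and the resolution (\ref{1dim1}), then run a Derksen--Weyman-style argument inductively on $n-r$, at each step analyzing the degeneracy loci of the evaluation maps from the resolution (\ref{resov}). The principal technical obstacle is the uniform production of enough sections on $M(r,0,n)$ as $r<n$ varies, since $M(r,0,n)$ is not height zero and therefore falls outside the direct quiver dictionary of \S4; this is the small-rank spanning analog of the arguments in \cite{Yuan7}, and constitutes the core difficulty of the proof.
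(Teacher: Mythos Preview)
Your proposal contains a genuine gap at the very first step: the map $\iota\colon M(r,0,n)\to M(n,0,n)$, $[\mg]\mapsto[\mg\oplus\mo_{\p^2}^{\oplus(n-r)}]$, is not well-defined. You assert that $\mg\in M(r,0,n)$ has Euler characteristic $0$, but in fact $\chi(\mg)=r-n<0$ when $n>r$, while $\chi(\mo_{\p^2})=1$. Although $\mg\oplus\mo_{\p^2}^{\oplus(n-r)}$ has the correct class $c^n_n$ and is $\mu$-semistable (both summands have slope $0$), it is \emph{not} Gieseker semistable: the subsheaf $\mo_{\p^2}$ has reduced Hilbert polynomial strictly larger than that of the whole, so it destabilizes. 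Hence $\iota$ does not land in the moduli space $M(n,0,n)$, and the commutative square you set up collapses.

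The paper's remedy is precisely to replace the split direct sum by the unique non-partially-split extension: for $\mg_n^r\in M(r,0,n)$ with $H^0(\mg_n^r)=0$, Lemma~\ref{lconsec} shows there is (up to isomorphism) a unique $\mg_n^n$ fitting in $0\to\mg_n^r\to\mg_n^n\to\mo_{\p^2}^{\oplus(n-r)}\to0$ that does not partially split, and this $\mg_n^n$ \emph{is} semistable (even stable when $\mg_n^r$ is $\mu$-stable). This defines the birational morphism $\delta_{n-r}\colon M(r,0,n)\to\ts_n^{n-r}\subset M(n,0,n)$ of Proposition~\ref{genmap}, where $\ts_n^{n-r}$ is the locus $\{\mg:\text{hom}(\mg,\mo_{\p^2})\geq n-r\}$. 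The commutative diagram~(\ref{cdmain3}) then plays exactly the role of your square, with $\delta_{n-r}$ in place of $\iota$. The hard step you correctly identify---surjectivity of the pullback---becomes surjectivity of the restriction $\gamma_n^{n-r}\colon H^0(M(n,0,n),\lambda_{c^n_n}(d))\to H^0(\ts_n^{n-r},\lambda_{c^n_n}(d))$. Rather than a spanning argument with theta sections, the paper proves this (Lemma~\ref{surres}) by working on the quiver side: $\ts_n^{n-r}$ is cut out by a $GL(Q,(n,2n))$-invariant ideal, and reductivity of $GL(Q,(n,2n))$ forces surjectivity of the restriction on semi-invariants. For this to transfer back to $M(r,0,n)$ via $\delta_{n-r}^*$, one also needs $\ts_n^{n-r}$ to be normal, which is Lemma~\ref{Snormal}.
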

Together with Proposition 4.1 and Theorem 4.16 (1) in \cite{Yuan7}, Theorem \ref{mainthm1} imply the following corollary directly.
\begin{coro}\label{maincoro1}The strange duality map $SD_{c_n^r,d}$ in (\ref{sdmap}) is an isomorphism for $r>0$, $n=r+1$ and $d>0$.
\end{coro}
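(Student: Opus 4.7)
The corollary is asserted to follow \emph{directly} from Theorem \ref{mainthm1} together with Proposition 4.1 and Theorem 4.16(1) in \cite{Yuan7}, so the task here is a reduction argument rather than a fresh computation. The plan is to deduce the case $(r,n)=(r,r+1)$ from the case $n=r$ (applied with rank $r+1$), which is covered by Theorem \ref{mainthm1}, by fitting both strange-duality maps into a single commutative square.

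The first step is to apply Theorem \ref{mainthm1} with rank equal to $r+1$, which gives that $SD_{c^{r+1}_{r+1},d}$ is an isomorphism for every $d>0$. Next I would assemble a commutative square whose bottom row is this known isomorphism and whose top row is $SD_{c^r_{r+1},d}$. The right-hand vertical arrow is the natural injection $\jmath^r_{r+1}$ of (\ref{mzeta}), given by multiplication with the distinguished section $\theta_d$ of $\z_d$. The left-hand vertical arrow $\alpha$ is the dual of the natural pullback on global sections induced by the rank-raising operation $\mg\mapsto\mg\oplus\mo_{\p^2}$, which sends the class $c^r_{r+1}$ to $c^{r+1}_{r+1}$ (it does not alter $c_2$) and hence realises $M(r,0,r+1)$ as a (closed) stratum of $M(r+1,0,r+1)$ inside its strictly semistable locus.

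The role of Proposition 4.1 in \cite{Yuan7} is to certify commutativity of this square. The underlying identity is the theta-divisor calculation
\[
H^i\!\bigl(\p^2,(\mg\oplus\mo_{\p^2})\otimes\mf\bigr)\;=\;H^i(\p^2,\mg\otimes\mf)\,\oplus\,H^i(\p^2,\mf),
\]
which shows that the pullback of the duality divisor $\mathtt{D}_{c^{r+1}_{r+1},d}$ along the "add a trivial summand" map decomposes as $\mathtt{D}_{c^r_{r+1},d}$ plus $M(r,0,r+1)\times D_{\z_d}$. This is exactly the geometric counterpart of the line-bundle identity $\lambda_d(c^{r+1}_{r+1})\cong\lambda_d(c^r_{r+1})\otimes\z_d$ from Remark \ref{atform}, so the two injections in the square match up tautologically. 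Theorem 4.16(1) in \cite{Yuan7} then supplies the remaining piece of data: it identifies the cokernel of $\alpha$ (or, equivalently, the extra global sections of $\lambda_{c^{r+1}_{r+1}}(d)$ not coming from $\lambda_{c^r_{r+1}}(d)$) with the cokernel of $\jmath^r_{r+1}$. Once both vertical arrows are known to be injective with matching cokernels, the bottom arrow an isomorphism forces the top arrow to be one as well by a standard diagram chase.

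The one non-formal point, which I expect to be the main obstacle if one were to write out a self-contained proof, is that the locus $\{\mg\oplus\mo_{\p^2}:\mg\in M(r,0,r+1)\}$ lies in the strictly $\mu$-semistable boundary of $M(r+1,0,r+1)$, so one has to extend the section-theoretic computations across this locus. This is precisely what is controlled by the codimension-$\geq 2$ estimates for the strictly semistable loci used earlier (e.g. the arguments surrounding Lemma \ref{pfmain2}) together with Zariski's main theorem; once those are granted, the argument is a one-paragraph diagram chase, which is what the word "directly" in the statement refers to.
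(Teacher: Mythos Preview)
Your overall shape is right—build a commutative square linking $SD_{c^{r}_{r+1},d}$ to $SD_{c^{r+1}_{r+1},d}$, with $\jmath^{r}_{r+1}$ on one side, and then use Theorem \ref{mainthm1}—and this is indeed the mechanism behind Proposition 4.1 of \cite{Yuan7} (whose in-paper analogue is the diagram (\ref{cdmain3})). But the specific left vertical map you propose is wrong, and this is not a cosmetic issue.

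The operation $\mg\mapsto\mg\oplus\mo_{\p^2}$ does \emph{not} produce a point of $M(r+1,0,r+1)$. For $\mg\in M(r,0,r+1)$ one has $\chi(\mg)=-1$, so $\chi(\mg\oplus\mo_{\p^2})=0$ and the reduced Hilbert polynomial of the subsheaf $\mo_{\p^2}$ strictly exceeds that of $\mg\oplus\mo_{\p^2}$; hence $\mg\oplus\mo_{\p^2}$ is not Gieseker semistable. So there is no ``add a trivial summand'' stratum inside $M(r+1,0,r+1)$, and the divisor identity you derive from $H^i((\mg\oplus\mo_{\p^2})\otimes\mf)=H^i(\mg\otimes\mf)\oplus H^i(\mf)$ has no moduli-theoretic meaning as stated. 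The correct map (this is what Proposition 4.1/3.1 of \cite{Yuan7} actually does, and what the paper reproduces in Lemma \ref{lconsec} and Proposition \ref{genmap}) sends a $\mu$-semistable $\mg$ with $H^0(\mg)=0$ to the \emph{unique non-split} extension
\[
0\longrightarrow \mg \longrightarrow \mg' \longrightarrow \mo_{\p^2}\longrightarrow 0,
\]
which \emph{is} semistable (stable if $\mg$ is $\mu$-stable) and lands in the closed locus $\ts_{r+1}^{1}\subset M(r+1,0,r+1)$. The section factorisation $\sigma_{c^{r+1}_{r+1},d}\big|_{\ts_{r+1}^{1}}=\sigma_{c^{r}_{r+1},d}\cdot\theta_d$ then comes from the short exact sequence, not from a direct-sum decomposition. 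Your speculation about what Theorem 4.16(1) of \cite{Yuan7} supplies is also off: rather than a cokernel matching, it provides the converse implication (roughly, that $SD_{c^{r+1}_{r+1},d}$ being an isomorphism forces $SD_{c^{r}_{r+1},d}$ to be one) once the diagram is in place; no separate codimension estimate or Zariski's main theorem is needed at this final step.
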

Together with Proposition 4.14 in \cite{Yuan7}, Theorem \ref{mainthm2} imply the following corollary directly.
\begin{coro}\label{maincoro2}The strange duality map $SD_{c_n^r,d}$ in (\ref{sdmap}) is an isomorphism for $n\geq r>0$ and $d=1,2,3$.
\end{coro}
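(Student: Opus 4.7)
The plan is to derive the corollary as an immediate consequence of two already-available inputs: Theorem \ref{mainthm2} of the present paper, which supplies injectivity of $SD_{c^r_n,d}$ for every $n\geq r>0$ and every $d>0$, and Proposition 4.14 of \cite{Yuan7}, which provides an equality of dimensions of the source and target of $SD_{c^r_n,d}$ in the special cases $d\in\{1,2,3\}$. Since both $H^0(M(r,0,n),\lambda_{c^r_n}(d))$ and $H^0(\mdh,\lambda_d(c^r_n))$ are finite-dimensional $\mathbb{C}$-vector spaces, an injective linear map between them is an isomorphism iff they have equal dimensions; so the entire task boils down to combining these two facts.

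Concretely, the first step is to invoke Theorem \ref{mainthm2} to get injectivity of $SD_{c^r_n,d}$, which immediately yields the inequality
\[
\dim H^0(M(r,0,n),\lambda_{c^r_n}(d))\leq \dim H^0(\mdh,\lambda_d(c^r_n)).
\]
The second step is to quote Proposition 4.14 of \cite{Yuan7}, which asserts that equality holds in this inequality whenever $d=1,2,3$. Once equality of dimensions is in hand, the injection is a bijection, hence an isomorphism, and the corollary follows.

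The main obstacle is not really in the deduction itself — given the two inputs, the argument is a one-line linear algebra observation — but in checking that the conventions of \cite{Yuan7} match those used here. In particular, one should verify that Proposition 4.14 of \cite{Yuan7} is formulated for the same determinant line bundles $\lambda_{c^r_n}(d)$ and $\lambda_d(c^r_n)$; the compatibility is essentially Remark \ref{atform}, which identifies $\lambda_d(c^r_n)$ with $\Theta_d^{\otimes r}\otimes \pi^{*}\mathcal{O}_{|dH|}(n)$, together with the fact that the source-side line bundle $\lambda_{c^r_n}(d)$ is constructed via the same universal Quot-scheme prescription in both papers. Once this identification is made explicit, no further computation is required and the corollary is proved.
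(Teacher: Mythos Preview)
Your proposal is correct and follows the same route as the paper: the corollary is stated immediately after Theorem~\ref{mainthm2} with the remark that it follows directly from that theorem together with Proposition~4.14 of \cite{Yuan7}. Your additional commentary on matching the conventions for the determinant line bundles is reasonable but not something the paper spells out; the deduction itself is the one-line linear algebra observation you describe.
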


\subsection{A variation of the strange duality map.}%%%%%%%%.      Section 6.3. A variation of the strange duality map.    %%%%%%%
In order to relate the map $SD_{c^r_n,d}$ in (\ref{sdmap}) to the map $SD(Q)$ in (\ref{sdquiver}), we define a variation of the strange duality map $SD_{c,u}$ in the sheaf theory, which we will denote by $VD_{c,u}$.

Let $Y$ be a projective smooth scheme of dimension $\dim(Y)$ with canonical line bundle $K_Y$.  For every element $u$ in the Grothendieck group $K(Y)$ of coherent sheaves, we can find finitely many bundles $\me_1,\cdots,\me_n$ such that $u=\displaystyle{\sum_{i=1}^n }k_i[\me_i]$ with $[\me_i]$ the class of $\me_i$ in $K(Y)$.  We define $u^{\vee}:=\displaystyle{\sum_{i=1}^n }k_i[\me^{\vee}]$ where $\me^{\vee}:=\mathscr{H}om(\me_i,\mo_Y)$, and $u^D:=%\displaystyle{\sum_{i=1}^n }k_i[E^{D}]=
u^{\vee}\otimes K_Y$. %where $E^{D}:=\mathscr{H}om(E_i,K_X)$. 
Easy to see $(u^D)^D=u$ and $(u^{\vee})^{\vee}=u$.  By Serre duality $\chi(c\otimes u)=(-1)^m\chi(c^{\vee}\otimes u^{D})$ for every $c,u\in K(Y)$.  

Let $S$ be a Noetherian scheme.  $Q:S\times Y\ra Y$ and $p:S\times Y\ra S$ are the projections. 
Let $\E$ be a sheaf over $S\times Y$ which is a $S$-flat family of coherent sheaves on $Y$.  Then analogous to the determinant line bundle map (Definition 8.1.1 in \cite{HL}), we have the following two well-defined morphisms

\begin{defn}\label{vdlbl}Let $\nu^l_{\E}: K(Y)\ra\Pic(S)$ be the composition of the homomorphisms:
\begin{equation}\label{vdlbf}\xymatrix@C=1.2cm{K(Y)=K^0(Y)\ar[r]^{q^*}&K^0(Y\times S)\ar[d]_{R^{\bullet}\hh om(\E,-)}&&\\ &K^0(Y\times S)
\ar[r]^{R^{\bullet}p_*}&K^0(S)\ar[r]^{\det^{-1}}&\Pic(S),}\end{equation}
where $K^0(-)$ is the subgroup of $K(-)$ generated by classes of locally free sheaves.

Let $\nu^h_{\E}: K(Y)\ra\Pic(S)$ be the composition of the homomorphisms:
\begin{equation}\label{vdlbfr}\xymatrix@C=1.2cm{K(Y)=K^0(Y)\ar[r]^{q^*}&K^0(Y\times S)\ar[d]_{R^{\bullet}\hh om(-,\E)}&&\\ &K^0(Y\times S)
\ar[r]^{R^{\bullet}p_*}&K^0(S)\ar[r]^{\det^{-1}}&\Pic(S),}\end{equation}
\end{defn}
 
Recall that the determinant line bundle map $\lambda_{\E}: K(Y)\ra\Pic(S)$ (dual to Definition 8.1.1 in \cite{HL}) is the composition of the homomorphisms:
\begin{equation}\label{dlbf}\xymatrix@C=1.2cm{K(Y)=K^0(Y)\ar[r]^{q^*}&K^0(Y\times S)\ar[d]_{\overset{L}{\otimes}~\E}&&\\ &K^0(Y\times S)
\ar[r]^{R^{\bullet}p_*}&K^0(S)\ar[r]^{\det^{-1}}&\Pic(S),}\end{equation}
where $\overset{L}{\otimes}$ is the flat tensor, i.e. $[\F\overset{L}{\otimes}\E]:=\displaystyle{\sum_{i\geq 0}}(-1)^i[\Tor^i(\F,\E)]$.

\begin{rem}\label{vtodl}If $R^{i}\hh om(\E,\mo_{S\times Y})=0$ for all $i\neq i_0$, then by definition for all $u\in K(Y)$ we have
$$\lambda_{R^{i_0}\hh om(\E,\mo_{S\times Y})}(u)^{\otimes (-1)^{i_0}}\cong\nu^l_{\E}(u).$$ 
Moreover by Grothendieck duality (or Lemma 5.5 in \cite{Abe}), we have 
for all $w\in K^0(Y\times S)$
\[\det~^{\otimes (-1)^{\dim(Y)+1}}[R^{\bullet}p_*(q^*K_Y~\otimes~ w^{\vee})]\cong \det [R^{\bullet}p_*w].\]
Hence we have
\begin{equation}\label{vtodlf1}\lambda_{\E}(u)^{\otimes (-1)^{\dim(Y)+1+i_0}}\cong\lambda_{R^{i_0}\hh om(\E,q^*K_{ Y})}(u^{\vee});
\end{equation}
and
\begin{equation}\label{vtodlf2}\lambda_{\E}(-u^D)^{\otimes (-1)^{\dim(Y)}}\cong\lambda_{R^{i_0}\hh om(\E,\mo_{S\times Y})}(u)^{\otimes (-1)^{i_0}}\cong\nu^l_{\E}(u).
\end{equation}
\end{rem}
\begin{rem}\label{vtodr}%If $R^{i}\hh om(\E,\mo_{S\times Y})=0$ for all $i\neq i_0$, then 
By definition for all $u\in K(Y)$ we have
$$\lambda_{\E}(u^{\vee})\cong\nu^h_{\E}(u).$$ 
\end{rem}

Analogously we can get well-defined line bundles $\nu_u^l(c)$ and $\nu_u^h(c)$ over the moduli spaces $M_Y(u)$ of semistable sheaves of class $u$ for $c$ in some subgroups of $K(Y)$, even though there is no universal family over $Y\times M_Y(u)$.  The maps $\nu^l_u,\nu^h_u$ also satisfy some kind of universal properties analogous to Theorem 8.1.5 in \cite{HL}.  For simplicity we now restrict ourselves to $Y=\p^2$.  Then $\nu_u^l(c)$ ($\nu_u^h(c)$, resp.) is well-defined if $\chi(u,c)=0$ ($\chi(c,u)=0$, resp.).

\begin{lemma}\label{VSD1}Let $S,T$ be two schemes of finite type.  Let $c,u\in K(\p^2)$ such that $\chi(c,u)=0$.  Let $\F$ ( $\G$, resp.) be a flat family of sheaves of class $u$ ( $c$, resp.) over $\p^2$, parametrized by $S$ ( $T$, resp.).  Assume moreover the following two conditions:
\begin{enumerate}
%\item $\forall ~s\in S$, $\F_s$ and $\G_s$ are of homological dimension $\leq 1$;
\item $\E xt^i(\G_t,\F_s)=0$, $\forall s\in S,~t\in T,~i>0$;
\item $H^2(\hh om(\G_t,\F_s))=0$, $\forall s\in S, ~t\in T$.
\end{enumerate}  
Then there is a canonical section $\varsigma_{\F,\G}\in H^0(S\times T,\nu^h_{\F}(c)\boxtimes\nu^l_{\G}(u))$, unique up to scalars, whose zero set is 
$$\mathtt{B}_{\F,\G}:=\big\{(s,t)\in  S\times T \bigm| \emph{hom}(\G_t,\F_s)=\emph{ext}^1(\G_t,\F_s)\ne 0\big\}.$$
\end{lemma}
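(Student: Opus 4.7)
The plan is to construct $\varsigma_{\F,\G}$ as the determinant of a morphism between two locally free sheaves of equal rank on $S\times T$, following the classical determinantal theta-section template. On $S\times T\times\p^2$, let $q_S$, $q_T$, $q$ denote the projections to $S\times\p^2$, $T\times\p^2$, $\p^2$ respectively, and let $\pi$ denote the projection to $S\times T$. I first form the sheaf $\hh om(q_T^*\G,q_S^*\F)$. Condition (1), together with cohomology and base change applied to the $S\times\p^2$-flat family $q_T^*\G$, gives $\E xt^i(q_T^*\G,q_S^*\F)=0$ for $i\geq 1$, so $R^\bullet\hh om(q_T^*\G,q_S^*\F)$ is quasi-isomorphic to the single $S\times T$-flat sheaf $\hh om(q_T^*\G,q_S^*\F)$.

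Next I push down along $\pi$. Condition (2) combined with $\dim\p^2=2$ forces the fiberwise cohomology of $\hh om(\G_t,\F_s)$ to vanish in degrees $\geq 2$, so $R^\bullet\pi_*\hh om(q_T^*\G,q_S^*\F)$ is concentrated in degrees $0$ and $1$. A standard device (twist by $q^*\mo_{\p^2}(N)$ for $N\gg 0$, take a two-step affine cover on the $\p^2$-factor, then truncate) represents this derived pushforward globally by a two-term complex of locally free sheaves
\[
\phi: E_0\longrightarrow E_1
\]
on $S\times T$. Since the fiberwise Euler characteristic equals $\chi(c,u)=0$, the ranks of $E_0$ and $E_1$ coincide, and $\det\phi$ is a canonical section of $\det(E_1)\otimes\det(E_0)^{-1}$ that vanishes at $(s,t)$ exactly when $\phi_{(s,t)}$ fails to be an isomorphism, i.e.\ when $\hom(\G_t,\F_s)\neq 0$. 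By $\chi(c,u)=0$ this is equivalent to $\mathrm{ext}^1(\G_t,\F_s)\neq 0$, so the zero locus is exactly $\mathtt{B}_{\F,\G}$.

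The step I view as the main obstacle is the identification of $\det(E_1)\otimes\det(E_0)^{-1}$ with $\nu^h_\F(c)\boxtimes\nu^l_\G(u)$. Unwinding Definition \ref{vdlbl}, restricting the two-term complex to $S\times\{t\}$ computes $\nu^h_\F([\G_t])=\nu^h_\F(c)$ via \eqref{vdlbfr}, while restricting to $\{s\}\times T$ computes $\nu^l_\G([\F_s])=\nu^l_\G(u)$ via \eqref{vdlbf}. A see-saw argument on $S\times T$, together with the additivity of $\nu^h_\F$ and $\nu^l_\G$ as homomorphisms on $K^0(\p^2)$, then pins down the line bundle globally. Uniqueness of $\varsigma_{\F,\G}$ up to scalars follows because any two such two-term presentations differ by a quasi-isomorphism of complexes whose determinant is a global unit. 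The real bookkeeping to watch is the $\det^{-1}$ convention in \eqref{vdlbf} and \eqref{vdlbfr}, so that the duality signs on the two factors match, and the fact that when this lemma is eventually applied to the moduli spaces $M(r,0,n)$ and $\mdh$ there is no global universal family; that descent will rely on the universal property of determinant line bundles in the style of Theorem 8.1.5 of \cite{HL}, adapted to $\nu^l$ and $\nu^h$ via Remarks \ref{vtodl} and \ref{vtodr}.
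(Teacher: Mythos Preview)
Your proposal is correct and follows essentially the same architecture as the paper's proof: use condition (1) to reduce $R^\bullet\hh om(q_T^*\G,q_S^*\F)$ to a single $S\times T$-flat sheaf, use condition (2) to represent its derived pushforward by a two-term complex of locally free sheaves of equal rank, take the determinant, and identify the resulting line bundle via a see-saw argument. The only cosmetic difference is in how the two-term complex is produced: the paper invokes Proposition 2.1.10 of \cite{HL} to take a length-two locally free resolution $0\to F_2\to F_1\to F_0\to\hh om(\ldots)\to 0$ by sheaves with $R^j(p_{S\times T})_*F_i=0$ for $j<2$, obtains the three-term complex $[R^2F_2\to R^2F_1\xrightarrow{r_0} R^2F_0]$, and then uses condition (2) to make $r_0$ surjective and truncate to $[R^2F_2\xrightarrow{r_1}\ker r_0]$---this is a cleaner citation than your ``twist and cover'' sketch (note that $\p^2$ has no two-open affine cover), but the content is the same.
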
 
\begin{proof}We have three projections: $p_{S\times T}:\p^2\times S\times T\ra S\times T$, $p_S:S\times T\ra S$ and $p_T:S\times T\ra T$.  By the condition (1), we have $\E xt^i((id_{\p^2}\times p_T)^*\G,(id_{\p^2}\times p_S)^*\F)=0$ for all $i>0$,  hence $\hh om((id_{\p^2}\times p_T)^*\G,(id_{\p^2}\times p_S)^*\F)$ is flat over $S\times T$.  By Proposition 2.1.10 in \cite{HL}, there is a locally free resolution
\[0\ra F_2\ra F_1\ra F_0\ra \hh om((id_{\p^2}\times p_T)^*\G,(id_{\p^2}\times p_S)^*\F)\ra0\]
such that $R^j(p_{S\times T})_*F_i=0$ and $R^2(p_{S\times T})_*F_i$ is locally free for $i=0,1,2$ and $j=0,1$.  Moreover we have a complex
\begin{equation}\label{complex}F^{\bullet}:=[R^2(p_{S\times T})_*F_2\ra R^2(p_{S\times T})_*F_1\xrightarrow{r_0} R^2(p_{S\times T})_*F_0]\end{equation}
with $h^i(F^{\bullet})=R^{2-i}(p_{S\times T})_*\hh om((id_{\p^2}\times p_T)^*\G,(id_{\p^2}\times p_S)^*\F).$

By the condition (2), we have $R^2(p_{S\times T})_*\hh om((id_{\p^2}\times p_T)^*\G,(id_{\p^2}\times p_S)^*\F)=0$ and hence the map $r_0$ in (\ref{complex}) is surjective.  Therefore we have a two-term complex of locally free sheaves on $S\times T$ as follows
\begin{equation}\label{complex1}F_1^{\bullet}:=[R^2(p_{S\times T})_*F_2\xrightarrow{r_1} \ker(r_0).]\end{equation}
By the see-saw lemma (c.f. Lemma 2.10 in \cite{Da2}), it is easy to see $\varsigma_{\F,\G}:=\det(r_1)$ gives a section of the line bundle $\nu^h_{\F}(c)\boxtimes\nu^l_{\G}(u)$ whose zero set is 
$$\mathtt{B}_{\F,\G}:=\big\{(s,t)\in  S\times T \bigm| \text{hom}(\G_t,\F_s)=\text{ext}^1(\G_t,\F_s)\ne 0\big\}.$$
\end{proof}

Recall that $M(r,0,n)^{b}$ is the subset of $M(r,0,n)$ consisting of locally free sheaves.  We have the following proposition which is a $(\nu^l,\nu^h)$-analog to Theorem 2.1 in \cite{Da2} or Proposition 2.5 in \cite{Yuan6}.
\begin{prop}\label{vglob}(1) There is a canonical section $\varsigma_{\crn,u_d}\in H^0(M(r,0,n)^b\times\mdh,\nu^l_{c^r_n}(u_d)\boxtimes\nu^h_{d}(\crn))$, unique up to scalars, whose zero set is 
\begin{equation}\label{vsddiv}\mathtt{B}_{\crn,u_d}:=\big\{([\mg],[\mf])\in   \wrn^b\times \mdh\bigm| \emph{hom}(\mg, \mf)=\emph{ext}^1(\mg,\mf)\ne 0\big\}.\end{equation}

(2) The section $\varsigma_{\crn,u_d}$ defines a linear map up to scalars
\begin{equation}
\label{VDmap}
VD_{\crn,u_d}:H^0(\wrn^b,\nu^l_{c^r_n}(u_d))^\vee \to H^0(\mdh,\nu^h_{d}(\crn)).
\end{equation}

(3) Denote by $\varsigma^h_{\mg}$ ($\varsigma^l_{\mf}$, resp.) the restriction of $\varsigma_{\crn,u_d}$ to $\{\mg\}\times\mdh$ ($\wrn^b\times\{\mf\}$, resp.).  Then $\varsigma^h_{\mg}$ ($\varsigma^l_{\mf}$, resp.) only depends (up to scalars) on the S-equivalence class of $\mg$ ($\mf$, resp.).  

%(4) If $\sigma_{\crn,u_L}$ is not identically zero, then by assigning $\mf$ to $\sigma_{\mf}$ we get a rational map $\Phi: \wrn^L\ra\p(H^0(\ml,\lambda_L(\crn)))$.  Similarly we have a rational map $\Psi:\ml\ra\p(H^0(\wrn^L,\lcl))$.  Moreover If the image of $\Phi$ is not contained in a hyperplane, then $SD_{\crn,u_L}$ is injective; if the image of $\Psi$ is not contained in a hyperplane, then $SD_{\crn,u_L}$ is surjective. 

\end{prop}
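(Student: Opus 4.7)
The plan is to apply Lemma \ref{VSD1} to local (Quot-scheme level) universal families and then descend the resulting section to the product of moduli spaces. For part (1), fix local universal families $\G$ over an étale chart $T$ of $\wrn^b$ and $\F$ over an étale chart $S$ of $\mdh$, obtained from suitable open subsets of the relevant Quot schemes, restricted to the loci of locally free, resp.\ semistable 1-dimensional, sheaves. The two hypotheses of Lemma \ref{VSD1} are immediate for every $(t,s)\in T\times S$: since $\G_t$ is locally free, $\E xt^i(\G_t,\F_s)=0$ for all $i\geq 1$; and since $\F_s$ is purely 1-dimensional, $\hh om(\G_t,\F_s)\cong \G_t^{\vee}\otimes \F_s$ is 1-dimensional, so $H^2(\p^2,\hh om(\G_t,\F_s))=0$. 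Lemma \ref{VSD1} therefore produces a section $\varsigma_{\F,\G}$ of $\nu^h_{\F}(\crn)\boxtimes \nu^l_{\G}(u_d)$ on $S\times T$ whose zero locus is the pullback of $\mathtt{B}_{\crn,u_d}$.

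To descend this local section to $\wrn^b\times \mdh$, I would argue, in the spirit of Theorem 8.1.5 in \cite{HL}, that the line bundles $\nu^l_{\crn}(u_d)$ and $\nu^h_{d}(\crn)$ themselves descend to the moduli spaces: the weight condition for Kempf-style descent along the GIT quotients is automatic because $\chi(\crn,u_d)=0$ forces the scalar automorphisms of a polystable object to act with trivial weight on the relevant determinant. The section $\varsigma_{\F,\G}$ is intrinsic to the sheaf pair $(\G_t,\F_s)$, as it is the determinant of the canonical map $r_1$ built from $R^{\bullet}(p_{S\times T})_{\ast}\hh om(-,-)$, so it descends to a section $\varsigma_{\crn,u_d}$ of $\nu^l_{\crn}(u_d)\boxtimes \nu^h_{d}(\crn)$ with the stated vanishing locus, unique up to a scalar. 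Part (2) is then formal: a section of $\nu^l_{\crn}(u_d)\boxtimes \nu^h_{d}(\crn)$ corresponds by Künneth to an element of $H^0(\wrn^b,\nu^l_{\crn}(u_d))\otimes H^0(\mdh,\nu^h_{d}(\crn))$, which is precisely the datum of the linear map $VD_{\crn,u_d}$.

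For part (3), I would exploit the multiplicativity of $\varsigma$ across short exact sequences. Given $0\to \mf'\to \mf\to \mf''\to 0$ with $\mf',\mf''$ semistable of the same reduced Hilbert polynomial, the long exact sequence of $\Ext^{\bullet}(\mg,-)$ together with the additivity of $\det R^{\bullet}p_{\ast}$ applied to the resolution in the proof of Lemma \ref{VSD1} forces
\[
\varsigma^h_{\mg}(\mf)=\varsigma^h_{\mg}(\mf')\cdot\varsigma^h_{\mg}(\mf'')
\]
up to a nonzero scalar. Iterating along a Jordan–Hölder filtration shows $\varsigma^h_{\mg}$ depends only on $\mathrm{gr}(\mf)$; the argument for $\varsigma^l_{\mf}$ is symmetric, using short exact sequences in the first variable.

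The main obstacle is the descent step in (1) when strictly semistable sheaves are present: one must verify that the stabilizers of polystable points act trivially on the fibre of the local section. This is the $(\nu^l,\nu^h)$-analog of the standard linearization check for determinant line bundles, and it goes through because of the orthogonality $\chi(\crn,u_d)=0$; the analogs of $\lambda_{\E}$-style naturality embodied in Remarks \ref{vtodl} and \ref{vtodr} let one transport the classical arguments to the $(\nu^l,\nu^h)$ setting without modification.
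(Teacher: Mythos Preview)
Your proposal is correct and is precisely the standard route. The paper does not actually give a proof of Proposition~\ref{vglob}; it simply introduces it as ``a $(\nu^l,\nu^h)$-analog to Theorem~2.1 in \cite{Da2} or Proposition~2.5 in \cite{Yuan6}'' and moves on. Your argument---verify the hypotheses of Lemma~\ref{VSD1} using local freeness of $\mg$ and pure $1$-dimensionality of $\mf$, descend via the orthogonality $\chi(c^r_n,u_d)=0$ and Kempf's lemma, then use multiplicativity in short exact sequences for the $S$-equivalence statement---is exactly the strategy of those cited references, transported to the $(\nu^l,\nu^h)$ setting as Remarks~\ref{vtodl}--\ref{vtodr} permit. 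One small organizational remark: once part~(1) is established on the level of the moduli spaces, part~(3) is essentially automatic, since the restriction $\varsigma^h_{\mg}$ already only sees the closed point $[\mg]\in\wrn^b$; your multiplicativity argument is really the mechanism by which the section descends in part~(1), rather than an independent step afterwards.
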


\begin{lemma}\label{vtos}$\nu^l_{c^r_n}(u_d)\cong\lambda_{c^r_n}(u_d)$ and $\nu^h_{d}(c^r_n)\cong\lambda_{d}(c^r_n)$.
\end{lemma}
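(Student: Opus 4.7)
The plan is to derive both isomorphisms from the Grothendieck--duality formulas collected in Remarks \ref{vtodl} and \ref{vtodr}, after which each comparison reduces to an equality of classes in $K(\p^2)$. At the family level, write $\G$ (resp.\ $\F$) for a local universal family of class $c^r_n$ over $M(r,0,n)^b$ (resp.\ of class $u_d$ over $\mdh$); by the universal property of $\lambda$ and its $\nu^l,\nu^h$ analogues referred to just before the statement, it suffices to verify both isomorphisms at this level and to observe that the formulas used are canonical enough to descend to the moduli spaces.

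For $\nu^h_d(c^r_n)\cong\lambda_d(c^r_n)$, Remark \ref{vtodr} tautologically gives $\nu^h_\F(u)\cong\lambda_\F(u^\vee)$ for every $u\in K(\p^2)$, with no hypothesis on $\F$. Specialising to $u=c^r_n$ reduces the claim to the identity $(c^r_n)^\vee=c^r_n$ in $K(\p^2)$. Since $ch(\me^\vee)$ is obtained from $ch(\me)$ by reversing the sign of the odd--degree components, and $c^r_n$ has trivial first Chern class, this identity is immediate from $K(\p^2)$ being detected by the Chern character.

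For $\nu^l_{c^r_n}(u_d)\cong\lambda_{c^r_n}(u_d)$, the family $\G$ is locally free on $M(r,0,n)^b$, so $R^i\hh om(\G,\mo)=0$ for $i\neq 0$. Applying formula (\ref{vtodlf2}) of Remark \ref{vtodl} with $i_0=0$ and $\dim Y=2$ gives $\nu^l_\G(u_d)\cong\lambda_\G(-u_d^D)$, and the claim reduces to the K-theoretic identity $-u_d^D=u_d$ in $K(\p^2)$. This is a short Chern--character computation: Riemann--Roch together with $\chi(u_d)=0$ fixes $ch(u_d)=(0,dH,-\tfrac{3d}{2}H^2)$, while dualising and tensoring with $K_{\p^2}=\mo_{\p^2}(-3)$ produces $ch(u_d^D)=(0,-dH,\tfrac{3d}{2}H^2)=-ch(u_d)$, giving the desired equality.

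Beyond the K-theoretic bookkeeping above, the only real subtlety is the descent from the $S\times\p^2$--level formulas in Remarks \ref{vtodl}--\ref{vtodr} to the moduli spaces. When a universal family exists this is immediate; otherwise one works on an atlas (e.g.\ a Quot scheme) and checks compatibility with the action of the structural group appearing in the GIT quotient, which is automatic because the isomorphisms in question are produced by Grothendieck duality and hence canonical. The statement should of course be read with $\nu^l_{c^r_n}(u_d)$ restricted to the locally free locus $M(r,0,n)^b$, which is where it is defined.
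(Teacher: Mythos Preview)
Your argument is correct and follows exactly the paper's route: both isomorphisms are obtained from Remark~\ref{vtodr} and formula~(\ref{vtodlf2}) in Remark~\ref{vtodl}, reducing to the $K$-theoretic identities $(c^r_n)^\vee=c^r_n$ and $-u_d^D=u_d$. The paper's proof is a one-line invocation of precisely these two remarks and two identities; your version simply spells out the Chern-character verifications and the descent bookkeeping that the paper leaves implicit.
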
 
\begin{proof}Notice that $-u_d^D=u_d$ and $(c^r_n)^{\vee}=c^r_n$.  The lemma follows straightforward from (\ref{vtodlf2}) in Remark \ref{vtodl} and Remark \ref{vtodr}. 
\end{proof}

Now we have two maps as follows.
\[SD_{\crn,u_d}:H^0(\wrn^b,\lambda_{c^r_n}(u_d))^\vee \to H^0(\mdh,\lambda_{d}(c^r_n));\] 
\[VD_{\crn,u_d}:H^0(\wrn^b,\lambda_{c^r_n}(u_d))^\vee \to H^0(\mdh,\lambda_{d}(c^r_n)).\]
By switching $\crn$ and $u_d$, we also have 
\[SD_{u_d,\crn}:H^0(\mdh,\lambda_{d}(c^r_n))^\vee \to H^0(\wrn^b,\lambda_{c^r_n}(u_d)).\]

It is easy to see that $SD_{u_d,\crn}$ is dual to $SD_{\crn,u_d}$.
 We will see later that $VD_{c^r_r,u_d}$ somehow equals to $SD_{u_d,c^r_r}$ after Fourier transform.  First we have the following proposition.
\begin{prop}\label{VDtoSD}\begin{enumerate}
\item The Fourier transform $\Phi:~M(dH,0)\dashrightarrow M(d,0,d)$ is a birational map of normal projective schemes and $\Phi^{*}\lambda_{c^d_d}(u_r)\cong\Phi^{*}\lambda_{c^d_d}(-u_r^D)%\cong\Phi^{*}\nu^l_{c^d_d}(u_r) \cong\kappa^*\lambda_d(c^r_r)
\cong\lambda_d((c^r_r)^{\vee})\cong\lambda_d(c^r_r)$, $\forall~d, r>0$.  

Moreover $\Phi^{*}: H^0(M(d,0,d),\lambda_{c^d_d}(u_r))\xrightarrow{\cong} H^0(\md,\lambda_{d}(c^r_r))$ is an isomorphism.

\item The restriction map $H^0(\mdh,\lambda_{d}(c^r_r))\ra H^0(U(dH,0)^b,\lambda_{d}(c^r_r))$ is an isomorphism, for all $d, r >0$. 
%where $U(dH,0)^b:=\Phi^{-1}(V(d,0,d)^b)$. 

\item %$\z^r_d(r)|_S\boxtimes \lambda_r(d)|_T\cong\lambda_d(r)|_S\boxtimes \z^d_r(d)|_T$ and $\widehat{\rd^1}=\widehat{\rd^2}$.  In particular, 
We have the following commutative diagram
\begin{equation}\label{SDtoVD}\xymatrix@C=2cm{H^0(U(dH,0)^b,\lambda_{d}(c^r_r))^{\vee}\ar[r]^{SD_{u_d,c^r_r}\circ(\kappa^*)^{\vee}}&H^0(V(r,0,r),\lambda_{c^r_r}(u_d))\\
H^0(V(d,0,d)^b,\lambda_{c_d^d}(u_r))^{\vee}\ar[u]^{(\Phi^{*})^{\vee}}_{\cong}\ar[r]_{VD_{c_r^r,u_d}} &H^0(U(rH,0),\lambda_r(c_d^d))\ar[u]_{\Phi^*}^{\cong}}.
\end{equation}
\end{enumerate}
\end{prop}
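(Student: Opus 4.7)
For part (1), I plan to first verify the $K$-theoretic identities $-u_r^D=u_r$ and $(c^r_r)^\vee=c^r_r$ in $K(\p^2)$ by direct resolution computations: a pure $1$-dimensional $\mf$ of class $u_r$ has $\mf^D=\E xt^1(\mf,K_{\p^2})$ again of class $u_r$ (by Serre duality applied to the local-to-global spectral sequence), and Grothendieck duality for torsion sheaves using a bundle resolution yields $[u_r^D]=-[u_r]$; the second identity is immediate for any locally-free representative. Birationality of $\Phi$ was established in \S 5.2. The main content is the line-bundle identification $\Phi^*\lambda_{c^d_d}(u_r)\cong\lambda_d(c^r_r)$: working with quasi-universal families $\F$ on $\mdh\times\p^2$ and $\G$ on $M(d,0,d)\times|H|$, I would show the fiberwise Fourier transform of $\F$ produces an $\mdh$-flat family of class-$c^d_d$ sheaves agreeing, on the locus where $\Phi$ is an isomorphism, with $(\Phi\times\mathrm{id}_{|H|})^*\G$ up to an $\mdh$-pullback twist (absorbed by the determinant construction). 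Base-change and the projection formula on the incidence $\mathcal{D}$ then identify the two determinant line bundles. The isomorphism of global sections follows from normality of both moduli and $\Phi$ being an isomorphism outside codimension-$\ge 2$ loci.

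For part (2), I chain through part (1): normality of $M(d,0,d)$ together with $M(d,0,d)\setminus V(d,0,d)^b$ having codimension $\ge 2$ (discussion preceding Lemma~\ref{pfmain2}) gives a restriction isomorphism $H^0(M(d,0,d),\lambda_{c^d_d}(u_r))\xrightarrow{\sim} H^0(V(d,0,d)^b,\lambda_{c^d_d}(u_r))$. Composing with the Fourier isomorphisms $H^0(\mdh,\lambda_d(c^r_r))\cong H^0(M(d,0,d),\lambda_{c^d_d}(u_r))$ and $H^0(V(d,0,d)^b,\lambda_{c^d_d}(u_r))\cong H^0(U(dH,0)^b,\lambda_d(c^r_r))$ from part (1), and using functoriality of $\Phi^*$, the resulting composition equals the direct restriction map.

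For part (3), commutativity of (\ref{SDtoVD}) reduces (via the K\"unneth principle and uniqueness up to scalars of the sections defining $SD$ and $VD$) to showing that the two effective divisors
\[\bigl\{(\mf,\mg):h^0(\p^2,\mg\otimes\mf^D)\ne 0\bigr\}\quad\text{and}\quad\bigl\{(\mf,\mg):\text{hom}_{\p^2}(\Phi(\mf),\Phi^{-1}(\mg))\ne 0\bigr\}\]
coincide in $U(dH,0)^b\times V(r,0,r)$. For locally-free $\mg$ of class $c^r_r$ and $\mf\in U(dH,0)^b$, Serre duality together with the Riemann--Roch identity $\chi(\mg,\mf)=0$ yield
\[h^0(\p^2,\mg\otimes\mf^D)=\text{ext}^1(\mf,\mg\otimes K_{\p^2})=\text{ext}^1(\mg,\mf)=\text{hom}(\mg,\mf).\]
On the other side, writing $\mg=\Phi(\mf')$ and applying the Fourier--Mukai adjunction $\Phi^R\cong\Phi^{-1}[1]$ --- computed from the Koszul resolution of $\mo_{\mathcal{D}}$ on $\p^2\times|H|$ together with Grothendieck duality using $\omega_{\mathcal{D}/|H|}\cong\tilde p^*\mo_{\p^2}(-2)\otimes\tilde q^*\mo_{|H|}(1)$ --- combined with $\Phi^{-1}\circ\Phi=\mathrm{id}$ on the birational locus, one obtains $\text{hom}(\Phi(\mf),\Phi^{-1}(\mg))=\text{ext}^1_{\p^2}(\mf,\mf')$; a further Serre-duality symmetry matches this to $\text{hom}(\mg,\mf)$, yielding the equality of the two divisors.

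The hard part will be the explicit Fourier--Mukai adjunction computation in part (3), especially tracking the cohomological shift $[1]$ and the $K_{\p^2}$-twist (i.e., $\kappa$) through Grothendieck--Serre duality, and then reconciling the two $\text{ext}^1$-pairings via the correct Serre-symmetry. Once the two effective zero divisors agree, the defining sections of $\lambda_d(c^r_r)\boxtimes\lambda_{c^r_r}(u_d)$ over $U(dH,0)^b\times V(r,0,r)$ differ by a nonzero scalar, and the commutativity of (\ref{SDtoVD}) follows by unraveling the K\"unneth decompositions of the strange-duality and variation strange-duality sections together with the line-bundle isomorphisms from part (1).
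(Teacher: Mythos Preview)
Your outline for parts (1) and (2) matches the paper's argument: the line-bundle identification is done by pulling back to the quotient cover $\widetilde{\ku}(dH,0)$, constructing an auxiliary sheaf on $\widetilde{\ku}(dH,0)\times\mathcal{D}$, and pushing it forward along both $p$ and $q$ using the lemmas from \cite{Yuan5}; part (2) is then the codimension-$\ge 2$ restriction argument you describe.

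For part (3), your overall strategy---reduce to equality of effective zero divisors---is the same as the paper's, but your execution diverges, and the divergence contains a gap. You invoke a Fourier--Mukai adjunction ``$\Phi^R\cong\Phi^{-1}[1]$''. This is not obviously true here: the two transforms in \S\ref{FT} are \emph{not} a pair of mutually inverse derived equivalences (they agree only on the open loci $U$ and $V$), and if one writes the first as a genuine integral functor with kernel $\mo_{\mathcal{D}}(2,-1)$, its right adjoint has kernel $\mo_{\mathcal{D}}(-1,2)^{\vee}\otimes p^*\omega_{\p^2}[2]$, which after the $\E xt$-shift for the divisor $\mathcal{D}$ does not literally recover the second transform---a further twist and a dualization intervene. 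Your sentence ``a further Serre-duality symmetry matches this to $\hom(\mg,\mf)$'' is exactly where this discrepancy would have to be absorbed, and as written it is not an argument. The first chain $h^0(\mg\otimes\mf^D)=\hom(\mg,\mf)$ is correct, but it compares the $SD$- and $VD$-divisors on the \emph{same} product, whereas the diagram (\ref{SDtoVD}) requires comparing them across $\Phi$; so that step alone does not finish.

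The paper avoids adjunctions entirely. It works in families over $S=\kv(r,0,r)^b$ and $T=\ku(dH,0)$ and defines a single sheaf
\[
\widetilde{\F_{S,T}}=(id\times p)^*\alpha_S^*\bigl(\F_S^{D}\otimes\tau_S^*\mo_{\p^2}(-1)\bigr)\otimes(id\times q)^*\alpha_T^*\bigl(\F_T\otimes\tau_T^*\mo_{\p^2}(2)\bigr)
\]
on $S\times T\times\mathcal{D}$. After checking the relevant $\Tor$-vanishing (generic transversality of supports), Lemma~3.4 and Lemma~A.7(1) of \cite{Yuan5} compute the pushforwards along $id\times p$ and $id\times q$ as $\alpha_S^*\F_S^D\otimes\alpha_T^*\G_T$ and $\alpha_S^*\G_S^{\vee}\otimes\alpha_T^*\F_T$ respectively. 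Thus \emph{both} divisor conditions are identified with the single condition $H^0(\mathcal{D},p^*\F^D_{S,s}(-1)\otimes q^*\F_{T,t}(2))\neq 0$, giving $\mathtt{B}_{c^r_r,u_d}=((\Phi\circ\kappa)\times\Phi)^*\mathtt{D}_{c^d_d,u_r}$ directly. This bypasses any adjoint-functor formalism; I recommend you replace the adjunction step with this incidence-variety computation.
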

\begin{proof}The strategy of the proof is the same as Proposition A.9 in \cite{Yuan5}.

We have a map $\Phi:~\widetilde{U}(dH,0)\rightarrow M(d,0,d)$.  Denote by $\mathfrak{M}(dH,0)\xrightarrow{\pi_{u_d}} M(dH,0)$ and $\km(r,0,r)\xrightarrow{\pi_{c^r_r}} M(r,0,r)$ the two good quotients from which one constructs the moduli spaces.  Denote by $\widetilde{\ku}(dH,0),\ku(dH,0),\kv(r,0,r)$ and $\kv(r,0,r)^b$ the preimages of $\widetilde{U}(dH,0),U(dH,0),V(r,0,r)$ and $V(r,0,r)^b$ respectively.

To prove (1), it is enough to show that over $\widetilde{\ku}(dH,0)$ we have $\pi_{u_d}^*\Phi^{*}\lambda_{c^d_d}(-u_r^D)\cong\pi_{u_d}^* \lambda_d((c^r_r)^{\vee})$. Let $\F_{u_d}$ be the universal family over $\p^2\times \widetilde{\ku}(dH,0)$.  Recall in (\ref{original}) we have the universal curve $\mathcal{D}$ in $\p^2\times |H|$ and we have the following commutative diagram.   
\[\xymatrix@C=2cm{&
\widetilde{\ku}(dH,0)\times\mathcal{D}\ar[ld]_{p_1}\ar[d]_{id_{\widetilde{\ku}(dH,0)}\times q}\ar[r]^{id_{\widetilde{\ku}(dH,0)}\times p}
                & \widetilde{\ku}(dH,0)\times\p^2\ar[d]^{p_0}\ar[lld]^{p_2} \\
{\widetilde{\ku}(dH,0)}&\widetilde{\ku}(dH,0)\times |H|\ar[l]_{p_2}\ar[r]^{p_0}&|H|\cong \p^2
               }.\]
Let $\mg\in V(r,0,r)^b$ and let $\mg=\mg_{\mf}=q_{*}(p^{*}(\mf\otimes\mo_{\p^2}(2)))\otimes\mo_{|H|}(-1)$ with $\mf\in U(dH,0)$.  Then define a sheaf $\widetilde{\F}$ over $\widetilde{\ku}(dH,0)\times\mathcal{D}$ as follows. 
$$\widetilde{\F}:=((id_{\widetilde{\ku}(dH,0)}\times p)^*p_0^*(\mf^{D}\otimes\mo_{\p^2}(-1))\otimes ((id_{\widetilde{\ku}(dH,0)}\times q)^*(\F_{u_d}\otimes p_0^*\mo_{\p^2}(2))).$$

We have a locally free resolution for $\mf^D\otimes \mo_{\p^2}(-1)$ as follows.
\begin{equation}\label{rain}0\ra\ma\ra\mb\ra\mf^D\otimes \mo_{\p^2}(-1)\ra0.
\end{equation}
We have for all $j\geq2$
$$\Tor^j(id_{\widetilde{\ku}(dH,0)}\times p)^*p_0^*(\mf^{D}\otimes\mo_{\p^2}(-1)),id_{\widetilde{\ku}(dH,0)}\times q)^*(\F_{u_d}\otimes p_0^*\mo_{\p^2}(2)))=0.$$
Since for a generic $s\in\widetilde{\ku}(dH,0)$ the intersection of the supports of $\mf$ and $(\F_{u_d})_s$ is of dimension 0, we have
$$\Tor^1(id_{\widetilde{\ku}(dH,0)}\times p)^*p_0^*(\mf^{D}\otimes\mo_{\p^2}(-1)),id_{\widetilde{\ku}(dH,0)}\times q)^*(\F_{u_d}\otimes p_0^*\mo_{\p^2}(2)))$$
is a subsheaf of lower dimension of $(id_{\widetilde{\ku}(dH,0)}\times p)^*p_0^*\ma\otimes ((id_{\widetilde{\ku}(dH,0)}\times q)^*(\F_{u_d}\otimes p_0^*\mo_{\p^2}(2)))$, hence it has to be zero.  
Then by Lemma 3.4 and Lemma A.7 (1) in \cite{Yuan5}, we have over $\widetilde{\ku}(dH,0)\times \p^2$
\[R^i(id_{\widetilde{\ku}(dH,0)}\times p)_{*}\widetilde{\F}=0,~\forall~i>0,~(id_{\widetilde{\ku}(dH,0)}\times p)_{*}\widetilde{\F}\cong p_0^{*}\mf^D\otimes\G_{\F_{u_d}},\]
and 
\[R^i(id_{\widetilde{\ku}(dH,0)}\times q)_{*}\widetilde{\F}=0,~\forall~i>0,~(id_{\widetilde{\ku}(dH,0)}\times q)_{*}\widetilde{\F}\cong p_0^{*}\mg^{\vee}\otimes\F_{u_d},\]
where $\G_{\F_{u_d}}$ is the fiber-wise Fourier transform of $\F_{u_d}$, in other words, $\G_{\F_{u_d}}$ induces the classifying map $\widetilde{\ku}(dH,0)\xrightarrow{\Phi\circ\pi_{u_d}}M(d,0,d)$.  Therefore
\[\pi_{u_d}^*\Phi^{*}\lambda_{c^d_d}(-u_r^D)\cong\det~^{-1}(R^{\bullet}(p_1)_{*}\widetilde{\F})\cong\pi_{u_d}^* \lambda_d((c^r_r)^{\vee}).\]
Hence we proved (1).
 
By Lemma A.3 in \cite{Yuan5}, we have that the restriction map $$H^0(M(d,0,d),\lambda_{c_d^d}(u_r))\xrightarrow{restr.} H^0(V(d,0,d),\lambda_{c_d^d}(u_r))$$ is an isomorphism, for all $d, r >0$.  On the other hand by (1), we have the isomorphism $H^0(U(dH,0),\lambda_{d}(c^r_r))\xrightarrow[\cong]{\Phi^*} H^0(V(d,0,d),\lambda_{c_d^d}(u_r))$.  Hence (2) is proved.

To prove (3), we restrict ourselves to $ \kv(r,0,r)^b\times \ku(dH,0)$.  Let $S=\kv(r,0,r)^b$ and $T=\ku(dH,0)$.  Let $\G_S$ ($\F_T$, resp.) be a $S$-flat ($T$-flat, resp.) family of sheaves in $V(r,0,r)^b$ ($U(dH,0)$, resp.) over $\p^2\times S$ ($\p^2\times T$, resp.).% and moreover for a generic $(s,t)\in S\times T$, the intersection of supports of $\mf^S_s$ and $\mf^T_t$ is of dimension 0.  
Let $\F_S$ ($\G_T$, resp.) be the fiber-wise Fourier transform of $\G_S$ ($\F_T$, resp.).  Let $\F_S^D=(id_{\p^2}\times \kappa)^*\F_S$ be the fiber-wise D-dual of $\F_S$.  Let $\G_S^{\vee}$ be the fiber-wise dual of $\G_S$ which is a $S$-flat family.  We have the following commutative diagram.
\[\xymatrix@C=1.2cm{S\times \cd\ar @<0.5ex> [d]^{id_S\times q}  \ar @<-0.5ex> [d]_{id_S\times p}& S\times T\times\cd \ar[l]_{\alpha_{S}^{\cd}}\ar @<0.5ex> [d]^{id_{S\times T}\times q}  \ar @<-0.5ex> [d]_{id_{S\times T}\times p}\ar[r]^{\alpha_T^{\cd}} & T\times\cd \ar @<0.5ex> [d]^{id_T\times q}  \ar @<-0.5ex> [d]_{id_T\times p}\\ S\times\p^2\ar[d]_{\tau_S}& S\times T\times \p^2\ar[l]^{\alpha_S}\ar[ld]^{\tau_{S\times T}}\ar[r]_{\alpha_T}\ar[d]_{h}\ar[rd]_{\tau_{S\times T}} & T\times\p^2 \ar[d]^{\tau_T}\\ \p^2 & S\times T&\p^2.}\]

Define
$$\widetilde{\F_{S,T}}:=((id_{S\times T}\times p)^*\alpha_S^*(\F_S^{D}\otimes \tau_S^*\mo_{\p^2}(-1)))\otimes ((id_{S\times T}\times q)^*\alpha_T^*(\F_T\otimes \tau_{T}^*\mo_{\p^2}(2))).$$
Since for a generic $(s,t)\in S\times T$, the intersection of the supports of $\mf^S_s$ and $\mf^T_t$ is of dimension 0, we have for all $j\geq1$
$$\Tor^j((id_{S\times T}\times p)^*\alpha_S^*((\mf^S)^{D}\otimes \tau_S^*\mo_{\p^2}(-1)),(id_{S\times T}\times q)^*\alpha_T^*(\mf^T\otimes \tau_{T}^*\mo_{\p^2}(2)))=0.$$
Then by Lemma 3.4 and Lemma A.7 (1) in \cite{Yuan5}, we have over $S\times T\times \p^2$
\[R^i(id_{S\times T}\times p)_{*}\widetilde{\F_{S,T}}=0,~\forall~i>0,~(id_{S\times T}\times p)_{*}\widetilde{\F_{S,T}}\cong \alpha_S^{*}\F_S^D\otimes\alpha_T^{*}\G_T,\]
and 
\[R^i(id_{S\times T}\times q)_{*}\widetilde{\F_{S,T}}=0,~\forall~i>0,~(id_{S\times T}\times q)_{*}\widetilde{\F_{S,T}}\cong \alpha_S^{*}\G_S^{\vee}\otimes\alpha_T^{*}\F_T.\]
Hence we have \begin{eqnarray}(\pi_{c^r_r}\times \pi_{u_d})^*\mathtt{B}_{c^r_r,u_d}&=&((\pi_{c^r_r}\circ\Phi\circ\kappa)\times (\pi_{u_d}\circ\Phi))^*\mathtt{D}_{c^d_d,u_r}\nonumber
\\
&=&\{(s,t)\big|H^0(\mathcal{D},p^{*}\F^D_{S,s}(-1)\otimes q^*\F_{T,t}(2))\neq 0,\}\end{eqnarray}
where $\mathtt{B}_{c^r_r,u_d}$ and $\mathtt{D}_{c^d_d,u_r}$ are as defined in (\ref{vsddiv}) and (\ref{sddiv}) respectively.

Therefore we have $\mathtt{B}_{c^r_r,u_d}=((\Phi\circ\kappa)\times (\Phi))^*\mathtt{D}_{c^d_d,u_r}$ and hence the diagram (\ref{SDtoVD}) commutes.
\end{proof}

\subsection{The proof of Theorem \ref{mainthm1}.}%%%%%%.   Section 6.4. Proof of the first theorem.   %%%%%%
As $r=n$, $M(r,0,n)\cong M(Q,(2r,r))$.  Theorem \ref{mainthm1} is essentially a corollary to Theorem \ref{mainquiver}.  The case $r=n=2$ has been proved in \cite{Yuan5}, hence without loss of generality, we assume $r\geq3$.
%Recall that $V(d,0,d)=\Phi(U(dH,0))$.  
%Then by Lemma A.3 and Theorem A.1 in \cite{Yuan5} we have
%\begin{lemma}\label{pfmain1}$M(d,0,d)\setminus V(d,0,d)$ is of codimension $\geq 2$ in $M(d,0,d)$.  Moreover $\Phi^*\lambda_{c^d_d}(r)\cong \lambda_d(c_r^r)$ and we have the following diagram
%\begin{equation}\label{restr1}\xymatrix{H^0(M(d,0,d),\lambda_{c^d_d}(r))\ar[r]_{\cong}^{restr.} \ar@<1.5ex>[d]&H^0(V(d,0,d),\lambda_{c^d_d}(r))\ar@<1ex>[d]^{\Phi^*}_{\cong} \\
%H^0(M(dH,0),\lambda_d(c^r_r))\ar[r]^{restr.}_{\cong}\ar@<1ex>[u]_{\cong}& H^0(U(dH,0),\lambda_d(c_r^r))\ar@<1.5ex>[u]^{\Phi_*}}\end{equation}
%\end{lemma}

%Hence we can replace $M(dH,0)$ in (\ref{sdmap}) by $U(dH,0)$ when $r=n$.  We also denote by $U(Q,(d,d))$ the image of $U(dH,0)$ via $\Psi$ in (\ref{comdia2}).    Then by Lemma A.8 in \cite{Yuan5} we have
%\begin{lemma}\label{pfmain2}\begin{enumerate}
%\item $M(r,0,r)\setminus M(r,0,r)^b$ is of codimension $\geq 2$ in $M(r,0,r)$;
%\item By sending each locally free sheaf to its dual, we get an isomorphism $\zeta:M(r,0,r)^b\xrightarrow{\cong}M(r,0,r)^b$;
%\item $\zeta^*\lambda_{c^r_r}(d)\cong\lambda_{c^r_r}(d)$.
%\end{enumerate}
%\end{lemma}
%Define $V(r,0,r)^b:=V(r,0,r)\cap M(r,0,r)^b$ and $V(Q,(r,2r))^b:=f(V(r,0,r)^b)$.  Then when $r=n\geq3$ we can replace $M(r,0,r)$ in (\ref{sdmap}) by $V(r,0,r)^b$.  

Recall that we have the following commutative diagram as in (\ref{comdia2})
\begin{equation}\label{comdia2p}\xymatrix@C=1cm{U(rH,0)\ar[r]^{\Psi\quad}\ar[d]_{\Phi}& M(Q,(r,r))\ar[d]^{g}_{\cong}\\ M(r,0,r)\ar[r]_{f\quad}^{\cong\quad}&M(Q,(r,2r))}.
\end{equation}

The two dimension vectors $(r,2r),~(d,d)$ of quiver $Q$ satisfy $\<(r,2r),(d,d)\>=0$ (def. see (\ref{debifm})).  Recall that there is a section $\bar{c}$ over $M(Q,(r,2r))\times M(Q,(d,d))$ defined in \S\ref{sdqr}.  

Define $U(Q,(d,d)):=\Psi(U(dH,0))$ and $V(Q,(r,2r))^b:=f(V(r,0,r)^b)$.  We then have the following isomorphism
\begin{equation}\label{sdcom1}U(dH,0)\times V(r,0,r)^b\xrightarrow[\cong]{(\Psi,f)}U(Q,(d,d))\times V(Q,(2r,r))^b.
\end{equation}
\begin{lemma}\label{pfmain3}Up to scalars, $(\Psi,f)^*\bar{c}=\varsigma_{c^r_r,u_d}$, where $\varsigma_{c^r_r,u_d}$ is as defined in Proposition \ref{vglob}.
\end{lemma}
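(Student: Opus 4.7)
The plan is to show that $(\Psi,f)^*\bar c$ and $\varsigma_{c^r_r,u_d}$ have the same zero divisor and live in canonically isomorphic line bundles, so they must be proportional.

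The crucial new input is a natural isomorphism $\Hom(\mg,\mf)\cong\Hom_Q(f(\mg),\Psi(\mf))$ for $\mg\in V(r,0,r)^b$ and $\mf\in U(dH,0)$. I would derive it by lifting every morphism $\mg\to\mf$ to a chain map between the two-term resolutions $0\to\mo_{\p^2}(-2)^{\oplus r}\to\mo_{\p^2}(-1)^{\oplus 2r}\to\mg\to 0$ (the specialisation of (\ref{resov}) that produces $f(\mg)$ via Theorem \ref{hzero2}) and $0\to\mo_{\p^2}(-2)^{\oplus d}\to\mo_{\p^2}(-1)^{\oplus d}\to\mf\to 0$ (from Proposition \ref{promd}(3), which also produces $\Psi(\mf)$). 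Such a lift exists and is unique since $\Hom(\mo_{\p^2}(-1),\mo_{\p^2}(-2))=0$ leaves no room for a homotopy; writing out the chain-map condition in terms of the matrices defining $f(\mg)$ and $\Psi(\mf)$ one recognises exactly the vanishing of the quiver differential $d^V_W$ in (\ref{qext1}). By Euler characteristic the same holds for $\Ext^1$, so the set-theoretic zero loci of $(\Psi,f)^*\bar c$ and $\varsigma_{c^r_r,u_d}$ coincide.

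To promote this to an equality of Cartier divisors, I would interpret both sections as determinants of perfect two-term complexes computing $\R\Hom(\mg,\mf)$: for $\bar c$ the total $\Hom$-complex of the two resolutions above (whose differential is precisely $d^V_W$), and for $\varsigma_{c^r_r,u_d}$ the two-term complex $F^\bullet_1$ in (\ref{complex1}) built in the proof of Lemma \ref{VSD1} from the family of $\hh om$-sheaves of the universal objects. Both perfect complexes represent the same $\R\Hom$ on fibres, so a see-saw argument in the spirit of Lemma \ref{VSD1} shows that their determinant sections coincide up to a unit, which is a nonzero scalar on the irreducible ambient variety.

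Finally, to match the ambient line bundles I would invoke Lemma \ref{vtos} together with the universal property of the determinant line bundle applied to a relative version of the two resolutions, constructed on open subsets of codimension at least $2$ in the spirit of Proposition \ref{VDtoSD}. This identifies $(\Psi,f)^*(\lambda(Q,(r,2r))_{-\<-,(d,d)\>}\boxtimes\lambda(Q,(d,d))_{\<(r,2r),-\>})$ canonically with $\lambda_{c^r_r}(u_d)\boxtimes\lambda_d(c^r_r)$, and two nonzero sections of the same line bundle with the same zero divisor differ by a scalar. The main obstacle is this last identification, since there is no universal resolution on the whole moduli space; one has to patch local constructions and exploit the normality of the moduli spaces together with the fact that the exceptional loci have codimension at least $2$.
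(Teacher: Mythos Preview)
Your central computation --- that applying $\hh om$ between the two resolutions produces a two-term complex whose differential is precisely the quiver map $d^V_W$ --- is exactly the heart of the paper's proof, and your identification $\Hom(\mg,\mf)\cong\Hom_Q(f(\mg),\Psi(\mf))$ is correct. However, the paper's argument is more direct and sidesteps what you call the ``main obstacle''.

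Rather than working on the moduli spaces $V(r,0,r)^b\times U(dH,0)$ and worrying about the absence of universal families, the paper passes to the preimages $R_V\subset\Rep(Q,(r,2r))^{ss}$ and $R_U\subset\Rep(Q,(d,d))^{ss}$. There the tautological bundles $\ma_U,\mb_U,\ma_V,\mb_V$ give honest global universal representations, and the sequences (\ref{us1}) and (\ref{us2}) produce honest universal sheaf families $\F,\G$ on $\p^2\times R_U$ and $\p^2\times R_V$. Taking $\hh om(\G,\F)$ and pushing forward to $R_U\times R_V$ yields a single two-term complex whose determinant is simultaneously (i) the function $c$ by the very definition of $c$ in \S\ref{semi}, and (ii) the pullback of $\varsigma_{c^r_r,u_d}$ by the construction in Lemma \ref{VSD1}. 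This gives the equality of pullbacks on $R_U\times R_V$ directly, and it then descends to the quotients; no patching, no codimension-two arguments, and no separate zero-divisor step are needed.

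In short: your detour through zero loci and then ``promoting to Cartier divisors'' is unnecessary, and your worry about the lack of a universal resolution disappears once you move upstairs to the representation spaces, where the GIT setup hands you universal objects for free.
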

\begin{proof}The proof is analogous to Claim 3.0.2 in \cite{Abe2} or Lemma 2.4 in \cite{LZ}.  Denote by $R_U$ ($R_V$, resp.) the preimage of $U(Q,(d,d))$ ($V(Q,(2r,r))^b$, resp.) inside $\Rep(Q,(d,d))$ ($\Rep(Q,(2r,r))$, resp.).  On $R_U$ and $R_V$ we have the universal representations as follows
\begin{equation}\label{ur1}\xymatrix@C=1cm{\ma_U\ar[r]^{Y_U}\ar@/^1pc/[r]^{X_U}\ar@/_1pc/[r]_{Z_U}&\mb_U},
\end{equation}
\begin{equation}\label{ur2}\xymatrix@C=1cm{\ma_V\ar[r]^{Y_V}\ar@/^1pc/[r]^{X_V}\ar@/_1pc/[r]_{Z_V}&\mb_U};
\end{equation}
where $\ma_U,\mb_U$ are rank $d$ bundles on $R_U$, %$\cong\mo_{R_U}^{\oplus d}$, 
$\ma_V$ is a rank $r$ bundle and $\mb_V$ is a rank $2r$ bundle on $R_V$.%\cong\mo_{R_V}^{\oplus 2r}$.

On the other hand, on $\p^2\times R_U$ and $\p^2\times R_V$ we have the following two exact sequences of bundles.
\begin{equation}\label{us1}0\ra \ma_U\boxtimes\mo_{\p^2}(-2)\xrightarrow{x\cdot X_U+y\cdot Y_U+z\cdot Z_U}\mb_U\boxtimes\mo_{\p^2}(-1)\ra \F\ra0,
\end{equation}
\begin{equation}\label{us2}0\ra \ma_V\boxtimes\mo_{\p^2}(-2)\xrightarrow{x\cdot X_V+y\cdot Y_V+z\cdot Z_V}\mb_V\boxtimes\mo_{\p^2}(-1)\ra \G\ra0,;
\end{equation}
where $\F$ ($\G$ resp.) induces the map $R_U\xrightarrow{\pi_U} U(Q,(d,d))\xrightarrow{\Psi^{-1}} U(dH,0)$ ($R_V\xrightarrow{\pi_V} V(Q,(2r,r))^b\xrightarrow{f^{-1}} V(r,0,r)^b$ resp.).  Notice that $\G$ is locally free.  On $\p^2\times R_U\times R_V$ we have\tiny
\[0\ra\mathscr{H}om(\G,\F)\ra\begin{array}{c}\mathscr{H}om(\mb_V\boxtimes\mo_{\p^2}(-1),\mb_U\boxtimes\mo_{\p^2}(-1))\\ \oplus\\ \mathscr{H}om(\ma_V\boxtimes\mo_{\p^2}(-2),\ma_U\boxtimes\mo_{\p^2}(-2)) \end{array}\ra\mathscr{H}om(\ma_V\boxtimes\mo_{\p^2}(-2),\mb_U\boxtimes\mo_{\p^2}(-1))\ra 0,\] 
\normalsize
where by abuse of notation, we use the same letter to denote both the sheaf on $\p^2\times R_U$ or $\p^2\times R_V$ and its pull back to $\p^2\times R_U\times R_V$.  Define three projections $p_R:\p^2\times R_U\times R_V\ra R_U\times R_V$ and $p_V~(p_U):R_U\times R_V\ra R_U~(R_V)$.  Then on $R_U\times R_V$ we have\tiny
\[
(p_R)_*\left(\begin{array}{c}\mathscr{H}om(\mb_V\boxtimes\mo_{\p^2}(-1),\mb_U\boxtimes\mo_{\p^2}(-1))\\ \oplus\\ \mathscr{H}om(\ma_V\boxtimes\mo_{\p^2}(-2),\ma_U\boxtimes\mo_{\p^2}(-2)) \end{array}\right)\xrightarrow{d^V_U}
(p_R)_*\mathscr{H}om(\ma_V\boxtimes\mo_{\p^2}(-2),\mb_U\boxtimes\mo_{\p^2}(-1))\]
\[\implies\begin{array}{c}\mathscr{H}om(p_V^*\mb_V,p_U^*\mb_U)\\ \oplus\\ \mathscr{H}om(p_V^*\ma_V,p_U^*\ma_U)\end{array} \xrightarrow{d^V_U}\mathscr{H}om(p_V^*\ma_V,p_U^*\mb_U)^{\oplus3}.\]
\normalsize
The map $d^V_U$ above can be represented by a square matrix of order $3rd$.  The function $c$ on $R_U\times R_V$ equals to $\det(d^V_U)$ by \S\ref{semi}.  On the other hand by Lemma \ref{VSD1} and Proposition \ref{vglob}, $\det(d^V_U)$ is (up to scalars) the pull back of $\varsigma_{c^r_r,d}$ to $R_U\times R_V$ via the map $(\Psi^{-1}\circ \pi_U, f^{-1}\circ\pi_V)$.  %Notice that $\mathscr{H}om(\G,\F)\cong \F\otimes\G^{\vee}$ by the local freeness of $\G$.  
Hence the lemma. 
\end{proof}
\begin{proof}[Proof of Theorem \ref{mainthm1}]As we have seen in Theorem \ref{mainquiver} in \S\ref{sdqr}, the section $\bar{c}$ induces the following isomorphism% as in (\ref{sdquiver})
\footnotesize
\[SD(Q):H^0(M(Q,(r,2r)),\lambda(Q,(r,2r))_{-\<-,(d,d)\>})^{\vee}\ra H^0(M(Q,(d,d)),\lambda(Q,(d,d))_{\<(r,2r),-\>}),\]
\normalsize
%where we write $H^0(M(Q,(r,2r))$ instead of $H^0(M(Q,(r,2r))_{-\<-,(d,d)\>}$ because there is no ambiguity on the stability condition.

Since $f$ and $g$ in (\ref{comdia2p}) are isomorphisms, $M(Q,(r,2r))\setminus V(Q,(r,2r))^b$ ($M(Q,(d,d))\setminus U(Q,(d,d))$ resp.) is of codimension $\geq 2$ inside the irreducible normal scheme $M(Q,(r,2r))$ ($M(Q,(d,d))$ resp.).  Hence by Proposition \ref{VDtoSD} and Lemma \ref{pfmain3}, we have the following commutative diagram
\begin{equation}\label{pfcomdia1}\xymatrix{H^0(M(Q,(r,2r)),\lambda(Q,(r,2r))_{-\<-,(d,d)\>})^{\vee}\ar[r]^{\quad SD(Q)}_{\cong} &H^0(M(Q,(d,d)),\lambda(Q,(d,d))_{\<(r,2r),-\>})\ar[d]_{\cong}^{restr.}\\
H^0(V(Q,(r,2r))^b,\lambda(Q,(r,2r))_{-\<-,(d,d)\>})^{\vee}\ar[u]_{\cong}^{(restr.)^{\vee}}\ar[r]^{\quad SD(Q)} &H^0(U(Q,(d,d)),\lambda(Q,(d,d))_{\<(r,2r),-\>})\ar[d]_{\cong}^{\Phi^*}\\
H^0(V(r,0,r)^b,\lambda_{c^r_r}(u_d))^{\vee}\ar[u]_{\cong}^{(f^*)^{\vee}}\ar[d]^{\cong}_{(\Phi^*)^{\vee}}\ar[r]^{VD_{c^r_r,d}}& H^0(U(dH,0),\lambda_d(c^r_r))\ar[d]^{\Phi^*}_{\cong}\\
H^0(U(rH,0)^b,\lambda_r(c^d_d))^{\vee}\ar[d]^{(restr.)^{\vee}}_{\cong}\ar[r]^{SD_{u_d,c^r_r}\circ(\kappa^*)^{\vee}}& H^0(V(d,0,d),\lambda_{c^d_d}(u_d))\\
H^0(M(rH,0),\lambda_{r}(c^d_d))^{\vee}\ar[r]^{SD_{u_d,c^r_r}\circ(\kappa^*)^{\vee}}& H^0(M(d,0,d),\lambda_{c_d^d}(u_r))\ar[u]^{\cong}_{restr.}
}
\end{equation}
$\kappa^{*}$ is an isomorphism by Corollary A.5 in \cite{Yuan5}.  By (\ref{pfcomdia1}) we get directly that $SD_{c^r_r,d}$ is an isomorphism and hence the theorem.
%Combine Theorem \ref{mainquiver}, Lemma \ref{pfmain1}, Lemma \ref{pfmain2} and Lemma \ref{pfmain3}, and we proved the theorem.
\end{proof}

\subsection{Some useful lemmas.}%%%%%%%%.     Section 6.5. Some useful lemmas.      %%%%%%%%%%%
In this subsection, we want to study the function $c$ on $\Rep(Q,(r,2r))\times\Rep(Q,(d,d))$ and $\bar{c}$ on $M(Q,(r,2r))\times M(Q,(d,d))$, as defined in \S\ref{semi} and \S\ref{sdqr}.  

Let $\Mat(m\times n,\bc)$ be the set of all $m\times n$ matrices with entries in $\bc$.  For any two matrices $\Gamma=\{\gamma_{ij}\}\in \Mat(m\times n,\bc),~\Omega=\{\omega_{st}\}\in \Mat(k\times l,\bc)$, we define
\begin{equation}\label{stardef}\Mat(mk\times nl,\bc)\ni~\Gamma\ast\Omega:=\left(\begin{array}{cccc}\omega_{11}\cdot\Gamma,&\omega_{12}\cdot\Gamma,&\cdots,&\omega_{1l}\cdot\Gamma\\\omega_{21}\cdot\Gamma,&\omega_{22}\cdot\Gamma,&\cdots,&\omega_{1l}\Gamma\\
\vdots&\ddots&\ddots&\vdots\\ \omega_{k1}\cdot\Gamma,&\omega_{k2}\cdot\Gamma,&\cdots,&\omega_{1l}\cdot\Gamma\end{array}\right)
\end{equation}
The following lemma is easy to see.
\begin{lemma}\label{ast}
Let $\Gamma,\Gamma_i\in \Mat(m\times n,\bc),~\Omega,\Omega_i\in \Mat(k\times l,\bc)$ $(i=1,2,\cdots,p)$.  Let $\Pi\in Mat(l\times h,\bc)$ and $\Delta\in Mat(n\times b,\bc)$ for any $h,b\in\mathbb{Z}_{>0}$.  Then for the operator $\ast$ we have the following three properties:
\begin{enumerate}
\item If $m=n,~k=l$, and $\Gamma,\Omega$ are invertible, then $\Gamma\ast\Omega$ is invertible and $(\Gamma\ast\Omega)^{-1}=(\Gamma)^{-1}\ast (\Omega)^{-1}$;
%\item $ \Gamma\ast(\Omega\cdot\Pi)=(\Gamma\ast \Omega)\cdot(\bi_n\ast\Pi)$;
\item $(\Gamma\cdot \Delta)\ast(\Omega\cdot \Lambda)=(\Gamma\ast\Omega)\cdot(\Delta\ast\Lambda)$;
\item If $mk=nl$, then $$\det(\Sigma_{i=1}^p\Gamma_i\ast\Omega_i)=(-1)^{\frac{mk((m-1)(k-1)+(n-1)(l-1))}4}\det(\Sigma_{i=1}^p\Omega_i\ast\Gamma_i).$$
\end{enumerate}
\end{lemma}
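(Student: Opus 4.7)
The plan is to recognize the operator $\ast$ as a variant of the classical Kronecker product: with the convention that $A\otimes B$ has outer block structure from $A$ and inner blocks $a_{ij}B$, one has $\Gamma\ast\Omega=\Omega\otimes\Gamma$. All three statements then become standard Kronecker-product identities, but I would write the proofs intrinsically in terms of $\ast$ so the exposition is self-contained.

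Parts (1) and (2) are straightforward block computations. For (2), I would expand the $(s,i),(u,b)$-block of $(\Gamma\cdot\Delta)\ast(\Omega\cdot\Pi)$, which is $(\Omega\Pi)_{su}\,(\Gamma\Delta)_{ib}=\sum_{t,j}\omega_{st}\pi_{tu}\gamma_{ij}\delta_{jb}$, and compute the $(s,i),(u,b)$-block of $(\Gamma\ast\Omega)\cdot(\Delta\ast\Pi)$ by the rule of block-matrix multiplication, obtaining the same double sum. Part (1) then follows immediately: apply (2) to $(\Gamma\ast\Omega)(\Gamma^{-1}\ast\Omega^{-1})=(\Gamma\Gamma^{-1})\ast(\Omega\Omega^{-1})=\bi_m\ast\bi_k=\bi_{mk}$.

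Part (3) is the heart of the lemma. The matrices $\sum_i\Gamma_i\ast\Omega_i$ and $\sum_i\Omega_i\ast\Gamma_i$ have exactly the same scalar entries (each scalar of the form $\omega_{st}\gamma_{ij}$), but indexed by opposite lexicographic orders on the product index sets $[k]\times[m]$ and $[l]\times[n]$. I would exhibit explicit row and column permutation matrices $P\in GL(mk,\bc)$ and $Q\in GL(nl,\bc)$ so that
\[
\sum_i\Omega_i\ast\Gamma_i \;=\; P^{T}\Bigl(\sum_i\Gamma_i\ast\Omega_i\Bigr) Q,
\]
where $P$ realizes the perfect shuffle $(s-1)m+i\mapsto (i-1)k+s$ on $\{1,\dots,mk\}$ and $Q$ realizes the analogous shuffle $(t-1)n+j\mapsto (j-1)l+t$ on $\{1,\dots,nl\}$. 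Taking determinants gives $\det(\sum_i\Omega_i\ast\Gamma_i)=\mathrm{sgn}(P)\mathrm{sgn}(Q)\det(\sum_i\Gamma_i\ast\Omega_i)$.

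The main obstacle — and the only nontrivial computation — is determining the sign of the perfect shuffle. I would compute $\mathrm{sgn}(P)$ by counting inversions directly: a pair $\bigl((s,i),(s',i')\bigr)$ with $(s-1)m+i<(s'-1)m+i'$ is sent to an inversion exactly when $s<s'$ and $i>i'$ with $(i-i')k>s'-s$; a careful double count (or, more cleanly, citing the known formula for the determinant of the commutation matrix from multilinear algebra) yields $\mathrm{sgn}(P)=(-1)^{mk(m-1)(k-1)/4}$, and analogously $\mathrm{sgn}(Q)=(-1)^{nl(n-1)(l-1)/4}$. Noting that $m(m-1)k(k-1)$ is automatically divisible by $4$ (both consecutive-integer products are even), these exponents are integers, and using the hypothesis $mk=nl$ to factor out $mk$ from the sum of exponents gives the claimed sign $(-1)^{mk\bigl((m-1)(k-1)+(n-1)(l-1)\bigr)/4}$, completing the proof.
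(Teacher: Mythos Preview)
Your proposal is correct. The paper gives no proof of this lemma at all---it is introduced with the phrase ``The following lemma is easy to see''---so your argument supplies precisely the details the paper omits. The identification $\Gamma\ast\Omega=\Omega\otimes\Gamma$ with the Kronecker product is the right observation; parts (1) and (2) then reduce to the standard mixed-product identity, and your treatment of (3) via the perfect-shuffle (commutation) permutations is the natural route. One small simplification: in your description of the inversion condition for $P$, the extra inequality $(i-i')k>s'-s$ is automatically satisfied once $s<s'$ and $i>i'$ (since then $(i-i')k\ge k>k-1\ge s'-s$), so the inversion count is exactly $\binom{k}{2}\binom{m}{2}=mk(m-1)(k-1)/4$, as you claim.
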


Let $(V,W)\in \Rep(Q,(r,2r))^{ss}\times\Rep(Q,(d,d))^{ss}$, and let $V$ ($W$, resp.) be represented by three $r\times 2r$ ($d\times d$, resp.) matrices $(\widetilde{A}_x^r,\widetilde{A}_y^r,\widetilde{A}_z^r)$ ($(B_x^d,B_y^d,B_z^d)$, resp.).  Let $g^{-1}(V)$ be represented by three $r\times r$ matrices $(A_x^r,A_y^r,A_z^r)$ well-defined up to the action of $GL(Q,(r,r))$.
%\begin{defn}For $V,W$ as the previous paragraph, 
Define \begin{equation}\label{matdef}\Mat(dr\times dr,\bc)\ni C(V,W):=B_x^d\ast A_x^r+B_y^d\ast A_y^r+B_z^d\ast A_z^r.\end{equation}
%$$\Mat(dr\times dr,\bc)\ni C(W,V):=B_x^d\ast A_x^r+B_y^d\ast A_y^r+B_z^d\ast A_z^r.$$

%\end{defn}

\begin{lemma}\label{exdet}%Let $\bar{c}$ be the section of $\lambda(Q,(r,2r))_{-\<-,(d,d)\>}\boxtimes \lambda(Q,(d,d))_{\<(r,2r),-\>}$ over $M(Q,(r,2r))\times M(Q,(d,d))$, as defined in \S\ref{sdqr}.  
\begin{enumerate}
\item $c(V,W)=\det\begin{pmatrix}B_x^d\ast\bi_{r},& -\bi_d\ast\widetilde{A}_x^r\\ B_y^d\ast\bi_{r},& -\bi_d\ast\widetilde{A}_y^r\\ B_z^d\ast\bi_{r},& -\bi_d\ast\widetilde{A}_z^r\end{pmatrix}$;
\item Up to scalars, for any $(V,W)\in M(Q,(r,2r))\times M(Q,(d,d))$, $\bar{c}(V,W)=\det(C(V,W))$ with $C(V,W)$ defined in (\ref{matdef}).
%\item If $d=r$, then $\bar{c}(g(W),W)\neq 0$ for any $W\in M(Q,(d,d))$.
\end{enumerate}
\end{lemma}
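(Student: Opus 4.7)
The plan is to treat (1) as a direct coordinate unpacking and to prove (2) by promoting the matrix identity (\ref{compare11}) via the substitution $e_i\mapsto B_i^d$.

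For (1), I would pick vec-bases on the Hom-spaces appearing in the complex (\ref{qext1}) and write out
$d^V_W(f_{-2},f_{-1})_a = f(ha)V(a) - W(a)f(ta)$
entry-by-entry for each arrow $a\in\{x,y,z\}$. Two elementary identifications then suffice: left multiplication by $W(a)=B_a^d$ on $\Hom(\bc^r,\bc^d)$ has vec-matrix $B_a^d\ast\bi_r$, while pre-composition with $V(a)=\widetilde{A}_a^r$ on $\Hom(\bc^{2r},\bc^d)$ has vec-matrix $\bi_d\ast\widetilde{A}_a^r$. Substituting these into $c(V,W):=\det d^V_W$ produces the displayed block matrix, up to a global sign that is absorbed into the sign convention for the determinant semi-invariant.

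For (2), the key observation is that (\ref{compare11}) is \emph{linear} in $(e_1,e_2,e_3)$: both sides are affine functions of $e$ valued in $M_{3r\times 3r}(\bc)$. This lets me promote the identity term-by-term—a constant scalar entry becomes a scalar multiple of $\bi_d$, and an entry $\sum_i\alpha_i e_i$ becomes $\sum_i\alpha_i B_i^d$. The promotion is compatible with left-multiplication by the constant $\widetilde{P}$ (itself promoted to $\bi_d\ast\widetilde{P}$) by Lemma \ref{ast}(2). Carrying out the promotion gives
\[ (\bi_d\ast\widetilde{P})\cdot L = \begin{pmatrix} \bi_{2rd} & * \\ \mathbf{0}_{rd\times 2rd} & C(V,W) \end{pmatrix}, \qquad L := \begin{pmatrix} \bi_d\ast\widetilde{A}_x^r & B_x^d\ast\bi_r \\ \bi_d\ast\widetilde{A}_y^r & B_y^d\ast\bi_r \\ \bi_d\ast\widetilde{A}_z^r & B_z^d\ast\bi_r \end{pmatrix}, \]
in which the bottom-right block is exactly $C(V,W)$. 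The matrix $L$ differs from the matrix $M$ displayed in (1) only by swapping its two column blocks (of widths $2rd$ and $rd$) and negating the $\widetilde{A}$-block, operations whose combined determinant is $\pm 1$; hence $\det M = \pm\det L$. Taking determinants of the promoted identity and using $\det(\bi_d\ast\widetilde{P}) = (\det\widetilde{P})^d\neq 0$ yields $c(V,W)=\det M = \pm(\det\widetilde{P})^{-d}\det C(V,W)$, which is (2) up to scalars.

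The step I expect to be the main obstacle is the promotion. Since the $B_i^d$'s do not commute in general, there is no algebra homomorphism $\bc[e_1,e_2,e_3]\to M_d(\bc)$ to plug into (\ref{compare11}); what saves the argument is the linearity in the $e_i$, which makes the promotion work term-by-term and commutes with the constant left multiplier $\widetilde{P}$ via Lemma \ref{ast}(2). Keeping the $\ast$ versus $\otimes$ conventions and block sizes consistent throughout is where most of the bookkeeping will go.
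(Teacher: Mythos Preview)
Your proposal is correct and is essentially the paper's own argument: for (2) the paper left-multiplies the block matrix of (1) by $\bi_d\ast\widetilde P$ and computes the two block columns separately via Lemma~\ref{ast}(2) (equations (\ref{usecom13}) and (\ref{usecom14})), obtaining a block-triangular matrix with $C(V,W)$ in one corner; your ``promotion'' of (\ref{compare11}) under $e_i\mapsto B_i^d$ is exactly this computation packaged in one step, with the linearity-in-$e_i$ observation being precisely what makes (\ref{usecom13})--(\ref{usecom14}) go through.
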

\begin{proof}(1) is obtained by the definition of $c$.

By (\ref{compare11}), we have 
 \begin{equation}\label{usecom11}\widetilde{P}\cdot\begin{pmatrix}\widetilde{A}^r_x,&x\bi_{r}\\ \widetilde{A}^r_y,&y\bi_{r}\\ \widetilde{A}^r_z,& z\bi_{r}\end{pmatrix}=\begin{pmatrix}\bi_{2r},&*\\ \mathbf{0}_{r\times 2r}, & x A^r_x+yA^r_y+zA^r_z\end{pmatrix},
\end{equation} 
where $\widetilde{P}\in GL(3r)$.  Hence
 \begin{equation}\label{usecom12}\begin{pmatrix}\widetilde{A}^r_x\\ \widetilde{A}^r_y\\ \widetilde{A}^r_z\end{pmatrix}=\widetilde{P}^{-1}\cdot\begin{pmatrix}\bi_{2r}\\ \mathbf{0}_{r\times 2r}\end{pmatrix},~\text{and}~~\begin{pmatrix}x\bi_{r}\\ y\bi_r\\ z\bi_r\end{pmatrix}=\widetilde{P}^{-1}\cdot\begin{pmatrix}*\\  x A^r_x+yA^r_y+zA^r_z\end{pmatrix},
\end{equation} 

By Lemma \ref{ast} (2), we have
\begin{eqnarray}\label{usecom13}\begin{pmatrix}\bi_d\ast\widetilde{A}_x^r\\  \bi_d\ast\widetilde{A}_y^r\\  \bi_d\ast\widetilde{A}_z^r\end{pmatrix}
&=&\bi_d\ast\begin{pmatrix}\widetilde{A}^r_x\\ \widetilde{A}^r_y\\ \widetilde{A}^r_z\end{pmatrix}=\bi_d\ast(\widetilde{P}^{-1}\cdot\begin{pmatrix}\bi_{2r}\\ \mathbf{0}_{r\times 2r}\end{pmatrix})\nonumber\\
&=&(\bi_d\ast\widetilde{P}^{-1})\cdot (\bi_d\ast\begin{pmatrix}\bi_{2r}\\ \mathbf{0}_{r\times 2r}\end{pmatrix})=(\bi_d\ast\widetilde{P}^{-1})\cdot\begin{pmatrix}\bi_{2rd}\\ \mathbf{0}_{rd\times 2rd}\end{pmatrix}.
\end{eqnarray}

By (\ref{usecom12}) we have $\exists ~\widetilde{P}_1\in \Mat(2r\times 3r,\bc)$, such that $\widetilde{P}=\begin{pmatrix}\widetilde{P}_1\\ A^r_x,A^r_y,A^r_z\end{pmatrix}$.  Hence
\begin{eqnarray}\label{usecom14}(\bi_d\ast\widetilde{P})\cdot\begin{pmatrix}B_x^d\ast\bi_r\\  B_y^d\ast\bi_r\\  B_z^d\ast\bi_r\end{pmatrix}
&=&\begin{pmatrix}\bi_d\ast\widetilde{P}_1\\ \bi_d\ast A^r_x,\bi_d\ast A^r_y,\bi_d\ast A^r_z\end{pmatrix}\cdot\begin{pmatrix}B_x^d\ast\bi_r\\  B_y^d\ast\bi_r\\  B_z^d\ast\bi_r\end{pmatrix}\nonumber\\ 
&=&\begin{pmatrix}(\bi_d\ast\widetilde{P}_1)\cdot \begin{pmatrix}B_x^d\ast\bi_r\\  B_y^d\ast\bi_r\\  B_z^d\ast\bi_r\end{pmatrix}\\ B_x^d\ast A^r_x+B_y^d\ast A^r_y+B_z^d\ast A_z^r\end{pmatrix}\end{eqnarray}

Therefore we have\scriptsize
\begin{eqnarray}\det\begin{pmatrix}B_x^d\ast\bi_{r},& -\bi_d\ast\widetilde{A}_x^r\\ B_y^d\ast\bi_{r},& -\bi_d\ast\widetilde{A}_y^r\\ B_z^d\ast\bi_{r},& -\bi_d\ast\widetilde{A}_z^r\end{pmatrix}&=&\det (\bi_d\ast\widetilde{P}^{-1})\cdot \det\begin{pmatrix}(\bi_d\ast\widetilde{P}_1)\cdot \begin{pmatrix}B_x^d\ast\bi_r\\  B_y^d\ast\bi_r\\  B_z^d\ast\bi_r\end{pmatrix},&-\bi_{2rd}\\B_x^d\ast A^r_x+B_y^d\ast A^r_y+B_z^d\ast A_z^r ,&\mathbf{0}_{rd\times 2rd}\end{pmatrix}\nonumber\\ 
&\sim&\det(B_x^d\ast A^r_x+B_y^d\ast A^r_y+B_z^d\ast A_z^r).\end{eqnarray}\normalsize
Hence we have proved (2).

\end{proof}

\subsection{The proof of Theorem \ref{mainthm2}.}%%%%%%.   Section 6.6. Proof of the second theorem.   %%%%%%
From now on let $n>r\geq 2$.  We at first define closed subsets $\ts_{n}^i\subset M(Q,(n,2n))\cong M(n,0,n)$ as follows.  \begin{equation}\label{defS}\Rep(Q,(n,2n))^{ss}\supset\widetilde{\ts}_{n}^i:=\{%(\xymatrix@C=1cm{\mathbb{C}^r\ar[r]^{A_y}\ar@/^1pc/[r]^{A_x}\ar@/_1pc/[r]^{A_z}&\mathbb{C}^r})
V\big|rank(C(V,\Lambda_3))\leq 3n-i\};\quad \ts_n^i:=\widetilde{\ts}_{n}^i//GL(Q,(n,2n));
\end{equation} 
where $\Lambda_3\in\Rep(Q,(3,3))$ is defined in Remark \ref{extend1} and $C(V,\Lambda_3)$ is defined in (\ref{matdef}).  East to see that $\widetilde{\ts}_n^i$ is a $GL(Q,(n,2n))$-invariant closed subscheme of $\Rep(Q,(n,2n))^{ss}$ and hence $\ts_n^i$ is a well-defined closed subscheme of $M(Q,(n,2n))$.  

Let $V\in\Rep(Q,(n,2n))^{ss}$ be represented by $n\times 2n$ matrices $(\widetilde{A}_x^V,\widetilde{A}_y^V,\widetilde{A}_z^V)$, then we have the following exact sequence
\begin{equation}\label{qtos}0\ra\mo_{\p^2}(-2)^{\oplus n}\xrightarrow{x\cdot\widetilde{A}_x^V+y\cdot\widetilde{A}_y^V+z\cdot\widetilde{A}_z^V}\mo_{\p^2}(-1)^{\oplus 2n}\ra\mg_n^n(V)\ra0,\end{equation}
where $\mg_n^n(V)$ is semistable.

\begin{lemma}\label{dimsec}For any $V\in\Rep(Q,(n,2n))^{ss}$, let $\mg_n^n(V)$ be the same as in (\ref{qtos}).  Then we have
$$V\in\widetilde{\ts}_n^i\Leftrightarrow\emph{hom}(\mg_n^n(V),\mo_{\p^2})\geq i.$$
In particular, we identify $M(n,0,n)$ with $M(Q,(n,2n))$ and have 
$$\mg\in\ts_n^i\Leftrightarrow\emph{hom}(\mg,\mo_{\p^2})\geq i.$$
\end{lemma}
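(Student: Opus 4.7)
The plan is to deduce the lemma from the identification
\begin{equation*}\dim\ker C(V,\Lambda_3) = \hom(\mg_n^n(V),\mo_{\p^2}),\end{equation*}
which suffices because $C(V,\Lambda_3)$ is a square $3n\times 3n$ matrix and $V\in\widetilde{\ts}_n^i$ is by definition the condition $\dim\ker C(V,\Lambda_3)\geq i$.

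First I will identify $\dim\ker C(V,\Lambda_3)$ with $\dim\Hom_Q(V,\Lambda_3)$. Since $\langle(n,2n),(3,3)\rangle=0$, the differential $d^V_{\Lambda_3}$ in (\ref{qext1}) is a square $9n\times 9n$ matrix with kernel $\Hom_Q(V,\Lambda_3)$. The block reduction carried out in the proof of Lemma \ref{exdet}(2) shows that, after left multiplication by the invertible matrix $\bi_3\ast\widetilde{P}$, the matrix $d^V_{\Lambda_3}$ takes the block form $\bigl(\begin{smallmatrix}\ast & -\bi_{6n}\\ C(V,\Lambda_3) & \mathbf{0}\end{smallmatrix}\bigr)$. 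Solving this block system then gives $\dim\ker d^V_{\Lambda_3} = \dim\ker C(V,\Lambda_3)$, and hence $\dim\Hom_Q(V,\Lambda_3) = \dim\ker C(V,\Lambda_3)$.

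Second, I will identify $\Hom_Q(V,\Lambda_3)$ with $\Hom(\mg_n^n(V),\mo_{\p^2})$ through a two-term complex functor. Define $F$ by sending $W\in\Rep(Q,(a,b))$ with matrices $(B_x,B_y,B_z)$ to the complex $F(W):=[\mo(-2)^a \xrightarrow{xB_x+yB_y+zB_z} \mo(-1)^b]$ placed in degrees $-1,0$. Morphisms of representations lift tautologically to chain maps, and there are no nontrivial chain homotopies since $\Hom(\mo(-1),\mo(-2))=0$; hence $\Hom_Q(V,W)=\Hom_{K^b(\operatorname{Coh}(\p^2))}(F(V),F(W))$. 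By (\ref{qtos}), $F(V)$ is a locally free resolution of $\mg_n^n(V)$. The key observation is that the matrix $M_\Lambda := xA^\Lambda_x + yA^\Lambda_y + zA^\Lambda_z$ displayed in Remark \ref{extend1} is, up to signs and a change of basis, the Koszul differential attached to the regular sequence $(x,y,z)\in H^0(\mo(1))^3$, so $\ker M_\Lambda = \mo(-3)$ and $\operatorname{coker} M_\Lambda = \mo_{\p^2}$. Consequently $F(\Lambda_3)$ fits in the distinguished triangle
\begin{equation*}\mo(-3)[1]\longrightarrow F(\Lambda_3)\longrightarrow \mo_{\p^2}\longrightarrow \mo(-3)[2]\end{equation*}
in $D^b(\p^2)$.

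Applying $R\Hom_{\p^2}(\mg_n^n(V),-)$ to this triangle and using the resolution (\ref{qtos}) to check that $\Ext^j(\mg_n^n(V),\mo(-3)) = 0$ for $j = 1,2$ (every $H^j(\mo(k))$ with $k\in\{-1,-2\}$ vanishes) yields $\Hom_{D^b}(\mg_n^n(V),F(\Lambda_3))\cong \Hom(\mg_n^n(V),\mo_{\p^2})$. Combined with the standard identity $\Hom_{K^b}(F(V),F(\Lambda_3)) = \Hom_{D^b}(F(V),F(\Lambda_3))$, valid because $F(V)$ is a bounded locally free resolution of $\mg_n^n(V)$ and all higher $\Ext$'s between the line bundles appearing in $F(V)$ and $F(\Lambda_3)$ vanish, this gives the chain $\dim\ker C(V,\Lambda_3) = \dim\Hom_Q(V,\Lambda_3) = \dim\Hom_{D^b}(F(V),F(\Lambda_3)) = \hom(\mg_n^n(V),\mo_{\p^2})$, completing the argument. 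The main obstacle is precisely the Koszul recognition of $M_\Lambda$, which reveals why $\Lambda_3$ is the correct test object for the subsets $\widetilde{\ts}_n^i$; once this observation is isolated, the remaining steps are routine derived-category manipulations.
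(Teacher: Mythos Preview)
Your proof is correct and takes a genuinely different route from the paper's. The paper argues by a direct matrix computation: it writes the $9n\times 9n$ matrix $\bigl(\begin{smallmatrix}A_x^{\Lambda}\ast\bi_{n}& -\bi_3\ast\widetilde{A}_x^V\\ \vdots & \vdots\end{smallmatrix}\bigr)$ explicitly using the matrices for $\Lambda_3$ from Remark~\ref{extend1}, row-reduces it to a block form containing the $6n\times 3n$ matrix governing $\Hom(\mo(-1)^{2n},\mo)\to\Hom(\mo(-2)^n,\mo)$, and separately (via the reduction in Lemma~\ref{exdet}) reduces the same matrix to a block form containing $C(V,\Lambda_3)$. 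Equating the two rank computations yields $\mathrm{hom}(\mg_n^n(V),\mo_{\p^2})+\mathrm{rank}\,C(V,\Lambda_3)=3n$.

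You instead interpret everything through the tilting functor $F$ to two-term complexes and recognize $F(\Lambda_3)$ as the middle of the Koszul resolution of $\mo_{\p^2}$, so that the triangle $\mo(-3)[1]\to F(\Lambda_3)\to\mo_{\p^2}$ together with the vanishing of $\Ext^{\bullet}(\mg_n^n(V),\mo(-3))$ identifies $\Hom_Q(V,\Lambda_3)$ with $\Hom(\mg_n^n(V),\mo_{\p^2})$. This is more conceptual: it explains \emph{why} $\Lambda_3$ is the correct test object (under the tilting equivalence it corresponds to $\mo_{\p^2}$, consistent with $g^{-1}\circ f(S^2\mt_{\p^2}(-1))$ in Remark~\ref{extend1}), and it avoids any explicit $9n\times 9n$ matrix manipulation. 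The paper's approach, on the other hand, is entirely self-contained within the matrix framework already set up in \S\ref{semi} and Lemma~\ref{exdet}, and requires no derived-category input. Your justification that $\Hom_{K^b}=\Hom_{D^b}$ here is correct, since $\mo(-2),\mo(-1)$ form part of a strongly exceptional sequence and all higher $\Ext$'s between them vanish.
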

\begin{proof}By (\ref{qtos}) we have 
\[\text{hom}(\mg_n^n(V),\mo_{\p^2})=6n-rank\begin{pmatrix}\widetilde{A}_x^V,&0,&0\\ \widetilde{A}_y^V,&\widetilde{A}_x^V,&0\\ \widetilde{A}_z^V,&0,&\widetilde{A}_x^V\\ 0,&\widetilde{A}_y^V,&0\\0,&\widetilde{A}_z^V,&\widetilde{A}_y^V\\0,&0,&\widetilde{A}_z^V\end{pmatrix}\]

Recall that $\Lambda_3$ can be represented by matrices $(A^{\Lambda}_{x},A^{\Lambda}_{y},A^{\Lambda}_{z})$ such that $x\cdot A^{\Lambda}_{x}+y\cdot A^{\Lambda}_{y}+z\cdot A^{\Lambda}_{z} =\begin{pmatrix}y,&-z,&0\\-x,&0,&z\\0,&x,&-y\end{pmatrix}$.

We then have 
\begin{eqnarray}\begin{pmatrix}A_x^{\Lambda}\ast\bi_{n},& -\bi_3\ast\widetilde{A}_x^V\\ A_y^{\Lambda}\ast\bi_{n},& -\bi_3\ast\widetilde{A}_y^V\\ A_z^{\Lambda}\ast\bi_{n},& -\bi_3\ast\widetilde{A}_z^V\end{pmatrix}&=&\begin{pmatrix}0,&0,&0,&-\widetilde{A}_x^V,&0,&0\\ -\bi_n,&0,&0,&0,&-\widetilde{A}_x^V,&0\\ 0,&\bi_n,&0,&0,&0,&-\widetilde{A}_x^V\\ \bi_n,&0,&0,&-\widetilde{A}_y^V,&0,&0\\ 0,&0,&0,&0,&-\widetilde{A}_y^V,&0\\0,&0,&-\bi_n,&0,&0,&-\widetilde{A}_y^V\\ 0,&-\bi_n,&0,&-\widetilde{A}_z^V,&0,&0\\ 0,&0,&\bi_n,&0,&-\widetilde{A}_z^V,&0\\0,&0,&0,&0,&0,&-\widetilde{A}_z^V\end{pmatrix}\nonumber\\
&\simeq &\begin{pmatrix}\bi_{3n},&0\\0,&-\begin{pmatrix}\widetilde{A}_x^V,&0,&0\\ \widetilde{A}_y^V,&\widetilde{A}_x^V,&0\\ \widetilde{A}_z^V,&0,&\widetilde{A}_x^V\\ 0,&\widetilde{A}_y^V,&0\\0,&\widetilde{A}_z^V,&\widetilde{A}_y^V\\0,&0,&\widetilde{A}_z^V\end{pmatrix}\end{pmatrix} \end{eqnarray}

On the other hand, by (\ref{usecom13}) and (\ref{usecom14}) we have
\[\begin{pmatrix}A_x^{\Lambda}\ast\bi_{n},& -\bi_3\ast\widetilde{A}_x^V\\ A_y^{\Lambda}\ast\bi_{n},& -\bi_3\ast\widetilde{A}_y^V\\ A_z^{\Lambda}\ast\bi_{n},& -\bi_3\ast\widetilde{A}_z^V\end{pmatrix}\simeq \begin{pmatrix}(\bi_3\ast\widetilde{P}_1)\cdot \begin{pmatrix}A_x^{\Lambda}\ast\bi_n\\  A_y^{\Lambda}\ast\bi_n\\  A_z^{\Lambda}\ast\bi_n\end{pmatrix},&-\bi_{6n}\\C(V,\Lambda_3) ,&\mathbf{0}_{3n\times 6n}\end{pmatrix}. \]
Hence we have
\[rank\begin{pmatrix}\widetilde{A}_x^V,&0,&0\\ \widetilde{A}_y^V,&\widetilde{A}_x^V,&0\\ \widetilde{A}_z^V,&0,&\widetilde{A}_x^V\\ 0,&\widetilde{A}_y^V,&0\\0,&\widetilde{A}_z^V,&\widetilde{A}_y^V\\0,&0,&\widetilde{A}_z^V\end{pmatrix}+3n=rank\begin{pmatrix}A_x^{\Lambda}\ast\bi_{n},& -\bi_3\ast\widetilde{A}_x^V\\ A_y^{\Lambda}\ast\bi_{n},& -\bi_3\ast\widetilde{A}_y^V\\ A_z^{\Lambda}\ast\bi_{n},& -\bi_3\ast\widetilde{A}_z^V\end{pmatrix}=rank (C(V,\Lambda_3))+6n,\]
which is equivalent to $\text{hom}(\mg_n^n(V),\mo_{\p^2})+rank(C(V,\Lambda_3))=3n$.  Therefore we proved the lemma.\end{proof}

We want to construct a birational map $\delta_{i}:M(n-i,0,n)\ra\ts_n^i$ which generalizes the map $\delta$ in Proposition 3.1 in \cite{Yuan7}.  Firstly we have the following lemma which generalizes Lemma 2.12 in \cite{Yuan7}.
\begin{lemma}\label{lconsec}Let $\mg_n^r$ be of class $c_n^r$ and we have the following exact sequence
\begin{equation}\label{consec}0\ra\mg_n^r\ra\mg_n^n\ra\mo_{\p^2}^{\oplus (n-r)}\ra 0.\end{equation}
Then we have
\begin{enumerate}
\item If $\mg_n^r$ is $\mu$-semistable with $H^0(\mg_n^r)=0$, then up to isomorphism there is a unique $\mg_n^n$ such that the sequence (\ref{consec}) does not partially split; and in this case the extension $\mg_n^n$ is semistable and lies in $\ts_n^{n-r}$.  If moreover $\mg_n^r$ is $\mu$-stable, then $\mg_n^n$ is stable.
\item For every $'\mg_n^n\in \ts_n^{n-r}$, we can find a semistable sheaf $\mg_n^n$ $S$-equivalent to $'\mg_n^n$ which lies in the sequence (\ref{consec}) with $\mg_n^r$ $\mu$-semistable and $H^0(\mg_n^r)=0$. 
\end{enumerate}
\end{lemma}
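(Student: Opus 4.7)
The plan is to reduce (1) to linear algebra on $H^1(\mg_n^r)$ via the long exact sequence of the extension, and to derive (2) from a Jordan--H\"older analysis combined with absorbing extra $\mo_{\p^2}$-summands into non-split subextensions.

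First I would compute the relevant cohomology. Because $\mg_n^r$ is $\mu$-semistable of slope zero with $H^0(\mg_n^r)=0$, Serre duality applied to $(\mg_n^r)^\vee(-3)$ (a $\mu$-semistable sheaf of negative slope, hence without sections) gives $H^2(\mg_n^r)=0$, and Riemann--Roch forces $h^1(\mg_n^r)=n-r$. Thus $\Ext^1(\mo_{\p^2}^{\oplus(n-r)},\mg_n^r)\cong H^1(\mg_n^r)^{\oplus(n-r)}$ and an extension class is a tuple $\xi=(\xi_1,\dots,\xi_{n-r})$ with $\xi_i\in H^1(\mg_n^r)\cong\bc^{n-r}$. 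The sequence partially splits along an inclusion $\mo_{\p^2}\hookrightarrow\mo_{\p^2}^{\oplus(n-r)}$ with coordinates $(c_1,\dots,c_{n-r})$ exactly when $\sum c_i\xi_i=0$, so non-partial-splitting is equivalent to $(\xi_1,\dots,\xi_{n-r})$ forming a basis of $H^1(\mg_n^r)$. The natural $GL(n-r)$-action on $\mo_{\p^2}^{\oplus(n-r)}$ is transitive on such bases and identifies the corresponding middle terms, giving uniqueness up to isomorphism. The $\Hom(\mo_{\p^2},-)$-long exact sequence then yields $H^0(\mg_n^n)=H^1(\mg_n^n)=0$, and applying $\Hom(-,\mo_{\p^2})$ gives $\mathrm{hom}(\mg_n^n,\mo_{\p^2})\geq n-r$, placing $\mg_n^n\in\ts_n^{n-r}$ by Lemma \ref{dimsec}.

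For semistability I would run a uniform slope-and-$\chi$ argument. For any proper nonzero subsheaf $\mg'\subset\mg_n^n$, set $\mg''=\mg'\cap\mg_n^r$ and $\mg'''=\mg'/\mg''\hookrightarrow\mo_{\p^2}^{\oplus(n-r)}$; $\mu$-semistability of the outer terms forces $\mu(\mg'')\leq 0$ and $\mu(\mg''')\leq 0$, hence $\mg_n^n$ itself is $\mu$-semistable. When $\mu(\mg')=0$, the inherited vanishings $H^0(\mg')\subset H^0(\mg_n^n)=0$ and $H^2(\mg')=0$ (from $\mu$-semistability of $\mg'$ plus Serre duality) give $\chi(\mg')\leq 0$, equivalently $c_2(\mg')\geq r(\mg')$, which is exactly the Gieseker inequality $p(\mg')\leq p(\mg_n^n)$. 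The main obstacle is the stable case: ruling out proper $\mg'$ with $c_2(\mg')=r(\mg')$ under $\mu$-stability of $\mg_n^r$. Here $\mu$-stability together with the numerical constraint $c_2(\mg''')\geq 0$ from $\mg'''\subset\mo_{\p^2}^{\oplus(n-r)}$ reduces to $\mg''=0$, so $\mg'$ lifts a slope-zero subsheaf of $\mo_{\p^2}^{\oplus(n-r)}$; saturating this lift into a trivial subbundle $\mo_{\p^2}^{\oplus s}$ cut out by constants and restricting $\xi$ to this summand, the non-partial-split hypothesis combined with the codimension-two $\Ext^1(\mathcal{Q},\mg_n^r)$-vanishing (where $\mathcal{Q}$ denotes the residual quotient) forces the restriction of $\xi$ to be nonzero, contradicting the lift and forcing $h^1(\mg')>0$.

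For (2) I would work with a polystable Jordan--H\"older representative of $'\mg_n^n$, written $\mo_{\p^2}^{\oplus k}\oplus\mh$ where $\mh$ is a direct sum of stable slope-zero sheaves not isomorphic to $\mo_{\p^2}$; since $'\mg_n^n\in\ts_n^{n-r}$, Lemma \ref{dimsec} gives $k\geq n-r$, and the Chern class computation together with $H^0(\mh)=H^2(\mh)=0$ yields $h^1(\mh)=c_2(\mh)-r(\mh)=n-(n-k)=k$. Setting $a:=k-(n-r)\geq 0$, I pick $a$ linearly independent classes in $H^1(\mh)$ to form a non-split extension $0\to\mh\to\mg_n^r\to\mo_{\p^2}^{\oplus a}\to 0$; by injectivity of the associated connecting map $\bc^a\to H^1(\mh)$, the middle term $\mg_n^r$ has $H^0(\mg_n^r)=0$ and is $\mu$-semistable of class $\crn$. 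Taking the split extension $\mg_n^n:=\mg_n^r\oplus\mo_{\p^2}^{\oplus(n-r)}$ yields a sheaf whose Jordan--H\"older graded piece is $\mo_{\p^2}^{\oplus k}\oplus\mh$, hence $S$-equivalent to $'\mg_n^n$, and which sits in the desired sequence.
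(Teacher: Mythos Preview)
Your argument for part (1) is close to the paper's and essentially correct: the cohomology count, the identification of non-partially-split extensions with bases of $H^1(\mg_n^r)$, and the $GL(n-r)$-action giving uniqueness are all as in the paper. For stability under the $\mu$-stable hypothesis your subsheaf analysis works but is more laborious than needed; the paper argues dually via quotients: any proper nonzero quotient $\mg_n^n\twoheadrightarrow\mq$ has either $c_1(\mq).H>0$ or $H^0(\mq)\neq 0$, and the latter is incompatible with $\mq$ being a semistable quotient of the same reduced Hilbert polynomial. This bypasses the saturation and $\Ext^1(\mathcal{Q},\mg_n^r)$-vanishing step, which in your version tacitly uses that $\mg_n^r$ is locally free near the support of $\mathcal{Q}$.

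Part (2), however, has a genuine gap. The Jordan--H\"older factors of a Gieseker-semistable sheaf of class $c_n^n$ are stable sheaves with the \emph{same reduced Hilbert polynomial}, hence with $\chi=0$. Since $\chi(\mo_{\p^2})=1$, the structure sheaf can never occur as a factor, so in your decomposition $\mo_{\p^2}^{\oplus k}\oplus\mh$ one always has $k=0$ and the argument collapses. (Concretely, the smallest stable factor is an ideal sheaf $\mi_p$ of a point, not $\mo_{\p^2}$.) A second symptom of the same error is your final step: the sheaf $\mg_n^r\oplus\mo_{\p^2}^{\oplus(n-r)}$ is \emph{not} Gieseker-semistable for $n>r$, because the summand $\mo_{\p^2}$ is a subsheaf with $\chi=1>0$, so it cannot serve as the required semistable $\mg_n^n$.

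The paper's route is different: first treat a \emph{stable} $\mg_n^n\in\ts_n^{n-r}$ and show that the evaluation map $h:\mg_n^n\to\mo_{\p^2}^{\oplus(n-r)}$ built from an $(n-r)$-dimensional subspace of $\Hom(\mg_n^n,\mo_{\p^2})$ is surjective. Stability forces $\chi(\mathrm{Im}(h))>0$, hence $H^0(\mathrm{Im}(h))\neq 0$, giving $\mo_{\p^2}\hookrightarrow\mathrm{Im}(h)$; iterating on $\mathrm{Im}(h)/\mo_{\p^2}$ yields $\mathrm{Im}(h)\cong\mo_{\p^2}^{\oplus(n-r)}$. The kernel is then $\mu$-semistable with $H^0=0$. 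For general $'\mg_n^n$ one passes to its polystable representative $\bigoplus_k\mg_{n_k}^{n_k}$ (each stable of class $c_{n_k}^{n_k}$) and applies the stable case to each summand.
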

\begin{proof}If $\mg_n^r$ is $\mu$-semistable, then $H^2(\mg_n^r)=0$ and $\mg_n^n$ is also $\mu$-semistable.  Hence $\mg_n^n$ is semistable iff $H^0(\mg_n^n)=0$.  We have $\Ext^1(\mo_{\p^2},\mg_n^r)\cong H^1(\mg_n^r)$ is of dimension $n-r$ since $H^0(\mg_n^r)=H^2(\mg_n^r)=0$, hence there is a unique up to isomorphisms extension $\mg_n^n$ such that (\ref{consec}) does not partially split.  Easy to see that in this case $H^0(\mg_n^n)=0$.  

Assume $\mg_n^r$ is $\mu$-stable.  Then for every non-trivial quotient $\mg_n^n\twoheadrightarrow\mq$, either $H^0(\mq)\neq 0$ or $c_1(\mq).H>0$.  Hence $\mg_n^n$ can not be strictly semistable.  Hence Statement (1).

On the other hand let $\mg_n^n\in M(n,0,n)^s\cap\ts^i_r$, define the map $h:\mg_n^n\ra\mo_{\p^2}^{i}$ whose restriction to any direct summand is not zero ($h$ is given by an $i$-dimensional subspace of $\Hom(\mg_n^n,\mo_{\p^2})$).  Denote by $Im(h)$ the image of $h$.  Then $Im(h)$ is $\mu$-semistable.  By stability of $\mg_n^n$ we have $\chi(Im(h))>0$ and hence $H^0(Im(h))\neq 0$.  Therefore we have an injection $\mo_{\p^2}\hookrightarrow Im(h)$.  However $Im(h)/\mo_{\p^2}$ is also a quotient of $\mg_n^n$ and hence $\chi(Im(h)/\mo_{\p^2})>0$.  By induction we have $Im(h)\cong\mo_{\p^2}^{\oplus i}$ and $h$ is surjective.

In general for $'\mg_n^n$ semistable, we choose $\mg_n^n=\bigoplus_{k=1}^l\mg_{n_k}^{n_k}$ with $\mg_{n_k}^{n_k}$ stable which is $S$-equivalent to $'\mg_n^n$ and then we can get a sequence (\ref{consec}).
\end{proof}
\begin{rem}\label{iton}Combine Lemma \ref{dimsec} and Lemma \ref{lconsec}, we have $\widetilde{\ts}_n^i$ is empty for $i\geq n$.
\end{rem}

Denote by  $\km(r,0,n)$ ($\km(r,0,n)^{s}$, $\km(r,0,n)^{\mu}$, $\km(r,0,n)^{\mu s}$, resp.) the stack of semistable (stable, $\mu$-semistable, $\mu$-stable, resp.) sheaves of class $c^r_n$.  Denote by $\ks_n^i$ the preimage of $\ts_n^i$ inside $\km(r,0,n)$.  By Lemma \ref{lconsec}, we have a rational map $\widetilde{\delta}_{n-r}:\km(r,0,n)^{\mu}\ra\ks_n^{n-r}$ inducing a surjective map $\km(r,0,n)^{\mu}\ra\ks_n^{n-r}\ra\ts_n^{n-r}$.  Easy to see that $\widetilde{\delta}_{n-r}(\km(r,0,n)^{\mu s})\subset \km(n,0,n)^s\cap(\ks_n^{n-r}\setminus\ks_n^{n-r+1})$ and $\widetilde{\delta}_{n-r}$ restricted to $\km(r,0,n)^{\mu s}$ is an isomorphism to its image.

\begin{lemma}\label{serres2}Let $0\leq i\leq n-1$.  Then $\widetilde{\ts}_n^i\setminus\widetilde{\ts}_n^{i+1}$ is a locally complete intersection in $\Rep(Q,(n,2n))^{ss}$ with codimension $i^2$.
\end{lemma}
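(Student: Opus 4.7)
The plan is to recognize $\widetilde{\ts}_n^i$ as the pullback under $C(-,\Lambda_3)$ of a classical determinantal variety and then combine its local structure with a sheaf-theoretic dimension count.

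First, by Lemma~\ref{dimsec}, $\widetilde{\ts}_n^i \setminus \widetilde{\ts}_n^{i+1}$ is precisely the set of $V \in \Rep(Q,(n,2n))^{ss}$ with $\mathrm{rank}\, C(V,\Lambda_3) = 3n-i$. So $\widetilde{\ts}_n^i$ is the preimage, under the polynomial map $C(-,\Lambda_3)\colon \Rep(Q,(n,2n))^{ss} \to \Mat(3n\times 3n,\bc)$, of the classical determinantal variety of $3n\times 3n$ matrices of rank $\leq 3n-i$. Near any $M_0$ of rank exactly $3n-i$, after choosing bases so that some $(3n-i)\times(3n-i)$ block is invertible, this determinantal variety is scheme-theoretically cut out by the $i^2$ entries of the Schur complement. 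Pulling those back gives $i^2$ local equations for $\widetilde{\ts}_n^i$ near any $V_0\in \widetilde{\ts}_n^i\setminus\widetilde{\ts}_n^{i+1}$.

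It then remains to show that the codimension of $\widetilde{\ts}_n^i$ in $\Rep(Q,(n,2n))^{ss}$ equals exactly $i^2$ at every such $V_0$; Krull's Hauptidealsatz together with the smoothness of the ambient (an open subset of affine space) will then force the $i^2$ local equations to form a regular sequence, giving the local complete intersection property. For the codimension I plan to use Lemma~\ref{lconsec}: the construction $\mg' \mapsto (\text{unique non-partially-split extension } 0\to \mg'\to \mg\to \mo_{\p^2}^{\oplus i}\to 0)$ gives an injection $\delta_i\colon M(n-i,0,n)^{\mu s}\hookrightarrow \ts_n^i\setminus\ts_n^{i+1}$, while part (2) of Lemma~\ref{lconsec} shows every $S$-equivalence class in $\ts_n^i$ arises from such an extension. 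The classical dimension formula $\dim M(r,0,n)=2rn-r^2+1$ with $r=n-i$ yields $\dim \ts_n^i = n^2+1-i^2$, and since $\dim M(n,0,n)=n^2+1$ the codimension of $\ts_n^i$ in the moduli space is $i^2$. Passing to $\Rep(Q,(n,2n))^{ss}$ adds the constant orbit dimension $\dim GL(Q,(n,2n))/\bc^*=5n^2-1$ at stable points, so $\widetilde{\ts}_n^i$ also has codimension $i^2$.

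The main delicate step is ensuring this codimension holds at every, not just a generic, point of the top stratum. For this I would rely on the irreducibility of $M(n-i,0,n)$ on $\p^2$ (a classical fact) together with the observation that $\ts_n^i$ is the closure of the irreducible image of $\delta_i$ in the irreducible normal $M(n,0,n)$, hence itself irreducible of codimension $i^2$; its preimage $\widetilde{\ts}_n^i$ is then also irreducible of codimension $i^2$ everywhere. Granting this, the $i^2$ local equations produced in the first paragraph realize $\widetilde{\ts}_n^i$ as a local complete intersection at every point of $\widetilde{\ts}_n^i\setminus\widetilde{\ts}_n^{i+1}$, which is the assertion of the lemma.
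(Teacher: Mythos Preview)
Your local determinantal analysis (Schur complement gives $i^2$ local equations) and the dimension count on the \emph{stable} locus are essentially the paper's argument. The genuine gap is in your final paragraph, where you pass from the codimension of $\ts_n^i$ in the moduli space to the codimension of $\widetilde{\ts}_n^i$ in $\Rep(Q,(n,2n))^{ss}$ by asserting that $\widetilde{\ts}_n^i$ is ``the preimage'' of $\ts_n^i$ and hence irreducible of the expected dimension.

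Neither step is justified. First, $\widetilde{\ts}_n^i$ need not be saturated for the good quotient $\pi:\Rep(Q,(n,2n))^{ss}\to M(Q,(n,2n))$: by Lemma~\ref{dimsec} membership in $\widetilde{\ts}_n^i$ is governed by $\hom(\mg^n_n(V),\mo_{\p^2})$, which is only upper-semicontinuous and can jump strictly when $\mg^n_n(V)$ degenerates to its associated graded, so $\pi^{-1}(\ts_n^i)$ may properly contain $\widetilde{\ts}_n^i$. Second, even for the genuine preimage, fibres of a GIT quotient over strictly semistable points are not equidimensional (orbit dimensions drop, closures of distinct orbits coalesce), so irreducibility of $\ts_n^i$ does not transfer upward. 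In particular nothing you have written rules out an irreducible component of $\widetilde{\ts}_n^i\setminus\widetilde{\ts}_n^{i+1}$ of dimension $>6n^2-i^2$ lying entirely over the strictly semistable locus; if such a component existed, the $i^2$ Schur-complement functions would fail to be a regular sequence there.

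The paper closes exactly this hole by an \emph{induction on $n$}: a strictly semistable $V\in\widetilde{\ts}_n^i\setminus\widetilde{\ts}_n^{i+1}$ sits in an extension of $V_1\in\Rep(Q,(n_1,2n_1))^{ss}$ by $V_2\in\Rep(Q,(n_2,2n_2))^{ss}$ with $n_1+n_2=n$, and using the induction hypothesis together with the crude bound $\text{ext}^1_Q(V_1,V_2)\le 6n_1n_2$ from (\ref{qext1}) one gets
\[
\dim\{\text{strictly semistable points in }\widetilde{\ts}_n^i\setminus\widetilde{\ts}_n^{i+1}\}\le \max_{\substack{n_1+n_2=n\\ i_1+i_2=i}}\{6n_1^2+6n_2^2-i_1^2-i_2^2+6n_1n_2\}\le 6n^2-i^2-4(n-1).
\]
This is strictly below $6n^2-i^2$, so the strictly semistable part cannot contribute an oversized component. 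With that bound in hand, your Schur-complement equations do give the local complete intersection statement.
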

\begin{proof}By definition $\widetilde{\ts}_n^i\setminus\widetilde{\ts}_n^{i+1}$ can be defined by $i^2$ equations inside $\Rep(Q,(n,2n))^{ss}$.  Since $\Rep(Q,(n,2n))^{ss}$ is open in $\Rep(Q,(n,2n))$ by Theorem \ref{GITK}, we have $\dim(\Rep(Q,(n,2n))^{ss})=6n^2$.  We only need to show that 
$$\dim(\widetilde{\ts}_n^i\setminus\widetilde{\ts}_n^{i+1})\leq 6n^2-i^2.$$
By Lemma \ref{lconsec}, we have
 \begin{eqnarray}\label{dimS}\dim(\Rep(Q,(n,2n))^s\cap(\widetilde{\ts}_n^i\setminus\widetilde{\ts}_n^{i+1}))&\leq &\dim(\km(n-i,0,n)^{\mu})+\dim(GL(Q,(n,2n)))\nonumber\\
 &=&(n-i)(n+i)+5n^2=6n^2-i^2.\end{eqnarray}
On the other hand, for any $V_k\in \Rep(Q,(n_k,2n_k))$ with $k=1,2$, we have $\text{ext}^1_Q(V_1,V_2)\leq 6n_1n_2$ by (\ref{qext1}).  Hence by induction assumption on $n$, strictly semistable points in $\widetilde{\ts}_n^i\setminus\widetilde{\ts}_n^{i+1}$ form a closed subset of dimension no more than $\displaystyle{\max_{\substack{0\leq i_k\leq n_k, k=1,2\\ n_1+n_2=n\\i_1+i_2=i}}}\{6n_1^2+6n_2^2-i_1^2-i_2^2+6n_1n_2\}\leq 6n^2-i^2-4(n-1).$%-2(n-i)$.  

Therefore we proved the lemma.
\end{proof}
\begin{lemma}\label{Snormal}Both $\widetilde{\ts}_n^i$ and $\ts_n^i$ are normal, for $0\leq i\leq n-1$.
\end{lemma}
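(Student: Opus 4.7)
The plan is to verify Serre's criterion $R_1 + S_2$ for $\widetilde{\ts}_n^i$ and then transfer normality to $\ts_n^i$, the latter being a good GIT quotient of the former by the reductive group $GL(Q,(n,2n))$ (Theorem \ref{GITK}). The case $i=0$ is immediate, since $\widetilde{\ts}_n^0 = \Rep(Q,(n,2n))^{ss}$ is open in the affine space $\Rep(Q,(n,2n))$, so I focus on $i\geq 1$.

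The key observation I will exploit is the determinantal description. By (\ref{defS}) together with Lemma \ref{exdet}, $\widetilde{\ts}_n^i$ is cut out inside $\Rep(Q,(n,2n))^{ss}$ by the vanishing of the $(3n-i+1)$-minors of the $3n\times 3n$ matrix $C(V,\Lambda_3)$, whose entries are linear in the coordinates of $V$. By Lemma \ref{serres2}, $\widetilde{\ts}_n^i$ attains the expected codimension $i^2$ of the generic determinantal locus of $3n\times 3n$ matrices of rank $\leq 3n-i$. Therefore, by the classical Hochster--Eagon theorem (a determinantal ideal attaining its expected grade is perfect), $\widetilde{\ts}_n^i$ is Cohen--Macaulay, in particular it satisfies $S_2$.

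For $R_1$, I would show that the singular locus of $\widetilde{\ts}_n^i$ is contained in $\widetilde{\ts}_n^{i+1}$; this suffices because Lemma \ref{serres2} forces $\widetilde{\ts}_n^{i+1}$ to have codimension $(i+1)^2 - i^2 = 2i+1\geq 3$ inside $\widetilde{\ts}_n^i$ (with the boundary case $i+1=n$ disposed of by Remark \ref{iton}). To verify smoothness of the open stratum $\widetilde{\ts}_n^i \setminus \widetilde{\ts}_n^{i+1}$, I would combine Lemma \ref{lconsec}(2) with Lemma \ref{dimsec}: every $V$ in this stratum corresponds to a sheaf $\mg_n^n(V)$ with $\text{hom}(\mg_n^n(V),\mo_{\p^2}) = i$ exactly, which is realized as an extension of $\mo_{\p^2}^{\oplus i}$ by a $\mu$-semistable sheaf of class $c_n^{n-i}$. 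Parametrising such extensions yields a smooth cover of the stratum (inherited from the smoothness of the $\mu$-stable locus for $c_n^{n-i}$ and the affine structure on extension classes), and a direct tangent-space comparison using the dimension count already appearing in (\ref{dimS}) identifies this cover with the stratum itself.

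The main obstacle will be the smoothness of the rank-$(3n-i)$ stratum: while this is classical for the universal determinantal variety of $3n \times 3n$ matrices, I need to check that the specific matrix $C(V,\Lambda_3)$ is sufficiently generic so that its Jacobian along this stratum attains the full expected rank. I expect this to be settled by an explicit calculation using Lemma \ref{exdet} and Lemma \ref{ast}, effectively reducing to the same numerical matching already established in Lemma \ref{serres2}.
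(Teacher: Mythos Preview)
Your strategy matches the paper's: Serre's criterion, with $S_2$ coming from the determinantal description at the expected codimension and $R_1$ from the extension parametrization of Lemma~\ref{lconsec}. Your invocation of Hochster--Eagon for Cohen--Macaulayness is in fact sharper than the paper, which just cites Lemma~\ref{serres2} without making explicit that the \emph{full} closed locus $\widetilde{\ts}_n^i$ (not merely the open stratum) is CM.

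Where you diverge is in the $R_1$ step, and this is exactly the ``main obstacle'' you flag. You aim to show the entire open stratum $\widetilde{\ts}_n^i\setminus\widetilde{\ts}_n^{i+1}$ is smooth, i.e.\ that the singular locus is contained in $\widetilde{\ts}_n^{i+1}$. The paper does \emph{not} prove this and does not need it. Instead, it observes (from the dimension estimates already in the proof of Lemma~\ref{serres2}) that the strictly semistable part of the open stratum has codimension $\geq 4(n-1)$, and then, within the stable part, the non-$\mu$-stable locus has codimension $\geq 2$ by Lemma~2.10 of \cite{Yuan7}. Thus the $\mu$-stable sublocus $\widetilde{\delta}_{n-r}(\km(n-i,0,n)^{\mu s})$ is smooth with complement of codimension $\geq 2$ in $\widetilde{\ts}_n^i$, which suffices for $R_1$. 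Your middle paragraph already contains this argument in embryo (``inherited from the smoothness of the $\mu$-stable locus''), but you then package it as a claim about the whole stratum, which forces the transversality check of your last paragraph. That check may or may not succeed---the linear map $V\mapsto C(V,\Lambda_3)$ is far from surjective onto $\Mat(3n\times 3n,\bc)$, so transversality to the determinantal stratification is a genuine condition---but the point is that it is unnecessary: simply drop the claim that the open stratum is smooth, keep the weaker claim that it is regular in codimension~1, and supply that via the $\mu$-stable locus as above.
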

\begin{proof}It is enough to show $\widetilde{\ts}_n^i$ is normal since $\ts_n^i$ is a good quotient of $\widetilde{\ts}_n^i$.

By Lemma \ref{serres2}, we see that $\widetilde{\ts}_n^i$ is Cohen-Macaulay with an open dense subset $\Rep(Q,(n,2n))^s\cap(\widetilde{\ts}_n^i\setminus\widetilde{\ts}_n^{i+1})$.  The complement of $\Rep(Q,(n,2n))^s\cap(\widetilde{\ts}_n^i\setminus\widetilde{\ts}_n^{i+1})$ inside 
$\widetilde{\ts}_n^i$ is of codimension $\geq2$.
 
The scheme $\Rep(Q,(n,2n))^s\cap(\widetilde{\ts}_n^i\setminus\widetilde{\ts}_n^{i+1})$ is a $GL(Q,(n,2n))$-bundle over $\km(n,0,n)^s\cap(\ks_n^{n-r}\setminus\ks_n^{n-r+1})$.  By Lemma 2.10 in \cite{Yuan7}, we have $\widetilde{\delta}(\km(r,0,n)^{\mu s})\cong \km(r,0,n)^{\mu s}$ is a smooth irreducible dense open substack of $\km(n,0,n)^s\cap(\ks_n^{n-r}\setminus\ks_n^{n-r+1})$ whose complement is of codimension $\geq 2$.  Hence $\Rep(Q,(n,2n))^s\cap(\widetilde{\ts}_n^i\setminus\widetilde{\ts}_n^{i+1})$ is irreducible and regular in codimension 1, hence so is $\widetilde{\ts}_n^i$.
\end{proof}

\begin{prop}\label{genmap}We have a birational morphism $\delta_{n-r}:M(r,0,n)\ra\ts_n^i$.  This morphism $\delta_{n-r}$ induces an isomorphism from  $M(r,0,n)^{\mu s}$ to its image which is contained in $M(n,0,n)^s\cap(\ts_n^{n-r}\setminus\ts_n^{n-r+1})$.  Here $M(r,0,n)^{\mu s}$ consists of all $\mu$-stable sheaves in $M(r,0,n)$.  

Moreover $\delta_{n-r}^*\lambda_{c^n_n}(d)\cong\lambda_{c^r_n}(d)$ for all $d\in\bz$ and hence we have the following isomorphism
\begin{equation}\label{resnr}\delta_{n-r}^*:H^0(\ts_n^{n-r},\lambda_{c_n^n}(d))\xrightarrow{\cong} H^0(M(r,0,n),\lambda_{c_n^r}(d)).
\end{equation}
\end{prop}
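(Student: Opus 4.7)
The plan is to construct $\delta_{n-r}$ via a universal extension of the identity of $R^1 p_*\mathcal{G}$, verify birationality by exhibiting an inverse on the $\mu$-stable locus together with a dimension count, compute the pullback of the determinant line bundle by additivity of $\det\circ Rp_*$ on short exact sequences, and finally deduce the isomorphism on global sections from Zariski's main theorem.

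First I would establish the basic vanishings: for every semistable sheaf $\mg$ of class $c_n^r$ with $n,r>0$, Gieseker semistability compared against $\mo_{\p^2}$ forces $H^0(\mg)=0$ (since $p_{\mo}/1=\binom{m+2}{2}>\binom{m+2}{2}-n/r=p_\mg/r$), and $\mu$-semistability comparing slopes $0>-3$ gives $H^2(\mg)\cong\Hom(\mg,\mo_{\p^2}(-3))^\vee=0$, so $\dim H^1(\mg)=n-r$ is constant. Taking a parameter space $R$ (an open $GL$-invariant subscheme of a Quot scheme) with universal family $\mathcal{G}$ on $\p^2\times R$, cohomology and base change then show $V:=R^1(p_2)_{*}\mathcal{G}$ is locally free of rank $n-r$, and the identity class $\mathrm{id}_V\in H^0(R,V\otimes V)\cong\Ext^1_{\p^2\times R}(p_2^{*}V^\vee,\mathcal{G})$ yields a universal extension
\begin{equation*}
0\to\mathcal{G}\to\mathcal{G}'\to\mo_{\p^2}\boxtimes V^\vee\to 0.
\end{equation*}
By Lemma \ref{lconsec}(1) every fibre of $\mathcal{G}'$ is semistable of class $c^n_n$, and by Lemma \ref{dimsec} it lies in $\ts_n^{n-r}$, so the resulting $GL$-invariant classifying morphism $R\to\ts_n^{n-r}$ descends to the desired morphism $\delta_{n-r}:M(r,0,n)\to\ts_n^{n-r}$.

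Next I would show birationality. For $\mu$-stable $\mg$, the same slope argument that killed $H^0(\mg)$ shows $\Hom(\mg,\mo_{\p^2})=0$; applying $\Hom(-,\mo_{\p^2})$ to the defining sequence and using $\Ext^1(\mo_{\p^2},\mo_{\p^2})=0$ yields $\hom(\mg',\mo_{\p^2})=n-r$, so by Lemma \ref{dimsec} the image lies in $M(n,0,n)^s\cap(\ts_n^{n-r}\setminus\ts_n^{n-r+1})$. The inverse sends such a $\mg'$ to the kernel of the evaluation $\mg'\to\mo_{\p^2}^{\oplus(n-r)}$; again by cohomology and base change applied to $\hh om_{p_2}(-,\mo_{\p^2})$ this is constructible in families, giving an isomorphism of $\mu$-stable loci. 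A dimension count $\dim M(r,0,n)=2nr-r^2+1=\dim\ts_n^{n-r}$ (via Lemma \ref{serres2}) together with irreducibility of $\ts_n^{n-r}$ (Lemma \ref{Snormal}) and properness of $\delta_{n-r}$ then forces the closed image to equal all of $\ts_n^{n-r}$.

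For the determinant line bundle, additivity of $\det\circ Rp_*$ applied to the defining sequence of $\mathcal{G}'$ gives
\begin{equation*}
\delta_{n-r}^{*}\lambda_{c^n_n}(d)\cong\lambda_{c^r_n}(d)\otimes\det{}^{-1}\!Rp_{*}(\mo_{\p^2}\boxtimes V^\vee\otimes q^{*}\mf),
\end{equation*}
where $\mf$ is any sheaf of class $u_d$; by the projection formula the second factor is $\det(H^{\bullet}(\p^2,\mf)\otimes V^\vee)^{-1}=(\det V^\vee)^{-\chi(\mf)}=\mo$, since $\chi(u_d)=0$. For the isomorphism on global sections, $\delta_{n-r}$ is a proper birational morphism onto the normal scheme $\ts_n^{n-r}$, so Zariski's main theorem gives $(\delta_{n-r})_{*}\mo_{M(r,0,n)}\cong\mo_{\ts_n^{n-r}}$, and the projection formula combined with the line bundle identity yields $(\delta_{n-r})_{*}\delta_{n-r}^{*}\lambda_{c^n_n}(d)\cong\lambda_{c^n_n}(d)$; taking $H^0$ gives (\ref{resnr}). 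The main obstacle is the family-level descent in the first paragraph, in particular verifying that the classifying morphism factors scheme-theoretically through the determinantal subscheme $\ts_n^{n-r}$ and that the inverse on the $\mu$-stable locus is a morphism even when a global universal family on $\ts_n^{n-r}\setminus\ts_n^{n-r+1}$ is unavailable; once these standard technicalities are settled, the remaining steps are routine.
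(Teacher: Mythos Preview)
Your proposal is correct and follows essentially the same approach as the paper. The paper's own proof is terse, asserting that the structural claims about $\delta_{n-r}$ are already implicit in the preceding setup (the map $\widetilde{\delta}_{n-r}$ built from Lemma~\ref{lconsec}, together with Lemma~\ref{Snormal}), and that the line bundle identity $\delta_{n-r}^{*}\lambda_{c_n^n}(d)\cong\lambda_{c_n^r}(d)$ follows by the argument of Proposition~3.1 in \cite{Yuan7}; your universal-extension construction, the additivity computation using $\chi(u_d)=0$, and the Zariski main theorem step are exactly what that reference unpacks.
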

\begin{proof}Given Lemma \ref{Snormal}, the statement $\delta_{n-r}^*\lambda_{c^n_n}(d)\cong\lambda_{c^r_n}(d)$ is the only thing left to prove, which can be deduced from analogous argument to Proposition 3.1 in \cite{Yuan7}. 
\end{proof}
\begin{lemma}\label{surres}The restriction map
\begin{equation}\label{gamma}\gamma_n^{n-r}:H^0(M(n,0,n),\lambda_{c^n_n}(d))\rightarrow H^0(\ts_n^{n-r},\lambda_{c^n_n}(d)) \end{equation}
is surjective.
\end{lemma}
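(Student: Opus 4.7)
Via Theorem~\ref{hzero2}, $M(n,0,n)\cong M(Q,(n,2n))$, and under the GIT construction of $M(Q,(n,2n))$ as $\Rep(Q,(n,2n))^{ss}/\!/G$ (where $G:=GL(Q,(n,2n))$), one has
\[H^0(M(n,0,n),\lambda_{c^n_n}(d))\cong \SI(Q,(n,2n))_{\sigma}\subset \bc[\Rep(Q,(n,2n))]\]
for a specific weight $\sigma\in\Gamma^{*}$. Similarly, $H^0(\ts_n^{n-r},\lambda_{c^n_n}(d))\cong \bc[\widetilde{\ts}_n^{n-r}]^{\sigma}$; here we use that the unstable locus in $\Rep(Q,(n,2n))$ has codimension $\ge 2$ (by the dimension bookkeeping in the proof of Lemma~\ref{serres2}). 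Thus the lemma is equivalent to showing that the restriction
\[\bc[\Rep(Q,(n,2n))]^{\sigma}\longrightarrow \bc[\widetilde{\ts}_n^{n-r}]^{\sigma}\]
is surjective.

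My strategy is to factor this map through the Zariski closure $Z:=\overline{\widetilde{\ts}_n^{n-r}}\subset \Rep(Q,(n,2n))$, the determinantal subvariety defined by $\mathrm{rank}\,C(V,\Lambda_3)\le 2n+r$. Since $Z$ is a $G$-invariant closed subscheme of the affine space $\Rep(Q,(n,2n))$, the surjection $\bc[\Rep(Q,(n,2n))]\twoheadrightarrow\bc[Z]$ of coordinate rings is $G$-equivariant; reductivity of $G$ preserves surjectivity on each isotypic component, giving $\bc[\Rep(Q,(n,2n))]^{\sigma}\twoheadrightarrow\bc[Z]^{\sigma}$. It then remains to show that $\bc[Z]^{\sigma}=\bc[\widetilde{\ts}_n^{n-r}]^{\sigma}$, i.e.\ every $\sigma$-semi-invariant on $\widetilde{\ts}_n^{n-r}$ extends across the unstable boundary inside $Z$. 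By Hartogs' extension this follows from two facts: (a) $Z$ is normal, and (b) $\mathrm{codim}_Z(Z\setminus \widetilde{\ts}_n^{n-r})\ge 2$.

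Fact (b) is a direct analogue of the codimension count in the proof of Lemma~\ref{serres2}: stratifying the unstable locus in $Z$ by Jordan--H\"older type decompositions (applying Lemma~\ref{lconsec} on each stratum) and inducting on $n$ gives a codimension bound of at least $4(n-1)\ge 2$ for $n\ge 2$. For fact (a), $Z$ is irreducible and its open part $\widetilde{\ts}_n^{n-r}$ is normal (Lemma~\ref{Snormal}) of the expected codimension $(n-r)^2$ in $\Rep(Q,(n,2n))$; by Serre's criterion, normality of $Z$ reduces to showing that $Z$ is Cohen--Macaulay and regular in codimension one. Regularity in codimension one along the unstable boundary follows once (b) is established. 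Cohen--Macaulayness is the principal technical obstacle: it amounts to verifying that the linear specialisation $V\mapsto C(V,\Lambda_3)$ of a generic determinantal variety is sufficiently generic that Cohen--Macaulayness is preserved, which I would attempt either by producing a global Eagon--Northcott/Lascoux type resolution of the pulled-back determinantal ideal, or by constructing a resolution of $Z$ from the incidence variety of pairs $(V,W)$ with $W\subset\bc^{3n}$ an $(n-r)$-dimensional subspace killed by $C(V,\Lambda_3)^{T}$ and showing the projection to $Z$ is birational with rationally connected fibres.
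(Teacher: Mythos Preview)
Your approach takes an unnecessary detour and leaves a genuine gap. The paper's proof is a two-line application of reductivity, and the reason it is so short is that the correct GIT identification is
\[
H^0(\ts_n^{n-r},\lambda_{c^n_n}(d))\;\cong\;\bigl(\bc[\Rep(Q,(n,2n))]/\mi_{Z}\bigr)^{G,\sigma}=\bc[Z]^{G,\sigma},
\]
where $Z=\overline{\widetilde{\ts}_n^{n-r}}\subset\Rep(Q,(n,2n))$. This holds because $\ts_n^{n-r}$ \emph{is} the GIT quotient $Z/\!/_{\sigma}G$ (the semistable locus of $Z$ for the character $\sigma$ is exactly $Z\cap\Rep(Q,(n,2n))^{ss}=\widetilde{\ts}_n^{n-r}$), and sections of $\mo(1)$ on a GIT quotient of an affine scheme are by construction the semi-invariants of the given weight on the affine scheme itself. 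Once you have this, reductivity of $G$ applied to the $G$-equivariant surjection $\bc[\Rep(Q,(n,2n))]\twoheadrightarrow\bc[Z]$ finishes the proof immediately.

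You instead identify $H^0(\ts_n^{n-r},\lambda_{c^n_n}(d))$ with $\bc[\widetilde{\ts}_n^{n-r}]^{\sigma}$ and then try to bridge back to $\bc[Z]^{\sigma}$ via Hartogs. This forces you to prove that $Z$ is normal and that $Z\setminus\widetilde{\ts}_n^{n-r}$ has codimension $\ge 2$ in $Z$. Neither is established: Lemma~\ref{Snormal} only gives normality of the open part $\widetilde{\ts}_n^{n-r}$, and you yourself flag Cohen--Macaulayness of $Z$ as the ``principal technical obstacle,'' offering only sketches (Eagon--Northcott, incidence resolution) without carrying them out. Your justification of the initial identification (``the unstable locus in $\Rep(Q,(n,2n))$ has codimension $\ge 2$'') is also not what Lemma~\ref{serres2} actually says, and in any case is irrelevant once you use the GIT identification above. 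In short, the normality of $Z$ is simply not needed; drop the Hartogs step and the proof collapses to the paper's one-liner.
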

\begin{proof}Since $GL(Q,(n,2n))$ is reductive and the ideal sheaf $\mi_{\widetilde{\ts}_n^{n-r}}$ of $\widetilde{\ts}_n^{n-r}$ is $GL(Q,(n,2n))$-invariant, the restriction map
$$ \mathbb{C}[\Rep(Q,(n,2n))]^{GL(Q,(n,2n)),-\<-,(d,d)\>}\ra(\mathbb{C}[\Rep(Q,(n,2n))]/\mi_{\widetilde{\ts}_n^{n-r}})^{GL(Q,(n,2n)),-\<-,(d,d)\>}$$
is surjective by the basic fact of the reductive group over $\mathbb{C}$ (see e.g. Fact (3) pp.29 in \cite{MFK}).  

We have the following commutative diagram
$$\xymatrix{\mathbb{C}[\Rep(Q,(n,2n))]^{GL(Q,(n,2n)),-\<-,(d,d)\>}\ar[d]^{\cong}\ar[r]&(\mathbb{C}[\Rep(Q,(n,2n))]/\mi_{\widetilde{\ts}_n^{n-r}})^{GL(Q,(n,2n)),-\<-,(d,d)\>}\ar[d]^{\cong}\\
H^0(M(n,0,n),\lambda_{c^n_n}(d))\ar[r]^{\gamma_n^{n-r}}&H^0(\ts_n^{n-r},\lambda_{c^n_n}(d)). }$$ 
Hence $\gamma_n^{n-r}$ in (\ref{gamma}) is surjective. 
\end{proof}

\begin{prop}\label{gpfmain3}We have the following commutative diagram
\begin{equation}\label{cdmain3}\xymatrix{H^0(M(r,0,n),\lambda_{c^r_n}(d))^{\vee}\ar[r]_{\cong}^{\quad(\delta_{n-r}^*)^{\vee}}\ar[d]_{SD_{c^r_n,d}}& H^0(\ts_n^{n-r},\lambda_{c^n_n}(d))^{\vee}\ar[d]^{\alpha_{\ts_n^{n-r}}}\ar@{^{(}->}[r]^{(\gamma_n^{n-r})^{\vee}\quad} &H^0(M(n,0,n),\lambda_{c^n_n}(d))^{\vee}\ar[d]_{\cong}^{SD_{c_n^n,d}}\\ H^0(M(dH,0),\lambda_d(c_n^r))\ar[r]_{Id}^{\cong} &H^0(M(dH,0),\lambda_d(c_n^r))\ar@{^{(}->}[r]_{\jmath_n^r}^{.\theta_d^{n-r}}& H^0(M(dH,0),\lambda_d(c_n^n)),}
\end{equation}
where $\jmath_n^r$ is as defined in (\ref{mzeta}).
%
%The map $$ H^0(M(r,0,n),\lambda_{c_n^r}(d))^{\vee}\xrightarrow{\Psi^*\circ SD(Q)\circ \gamma^{\vee}\circ (j_+^*)^{\vee}} %H^0(M(Q',\beta'),\lambda(Q',\beta')_{\<\alpha',-\>})\xrightarrow{\Psi^*}
%H^0(M(dH,0),\lambda_{d}(c^n_n))$$ factors through 
%the map $\jmath_n^r$ in (\ref{mzeta}).  
%
%Moreover, the inducing map from $H^0(M(r,0,n),\lambda_{c_n^r}(d))^{\vee}$ to $H^0(M(dH,0),\lambda_d(c_n^r))$ coincides with the strange duality map $SD_{c^r_n,d}$ in (\ref{sdmap}).  In other words, we have $SD_{c^r_n,d}\circ \jmath_n^r=\Psi^*\circ SD(Q)\circ \gamma^{\vee}\circ (j_+^*)^{\vee}.$
\end{prop}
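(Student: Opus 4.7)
The plan is to establish a single section-level identity on $\ts_n^{n-r}\times\mdh$ from which both squares of (\ref{cdmain3}) drop out mechanically. Concretely, by Lemma \ref{lconsec} every point of $\ts_n^{n-r}$ is $S$-equivalent to some $\mg_n^r\oplus\mo_{\p^2}^{\oplus(n-r)}$ with $\mg_n^r$ $\mu$-semistable of class $c^r_n$, and for any $\mf\in\mdh$ we have
\begin{equation*}
\bigoplus_i H^i((\mg_n^r\oplus\mo_{\p^2}^{\oplus(n-r)})\otimes\mf)\;\cong\;\bigoplus_i H^i(\mg_n^r\otimes\mf)\;\oplus\;\bigoplus_i H^i(\mf)^{\oplus(n-r)}.
\end{equation*}
Hence, set-theoretically, $\mathtt{D}_{c_n^n,d}\cap(\ts_n^{n-r}\times\mdh)=(\delta_{n-r}\times\mathrm{id})^{-1}(\mathtt{D}_{c_n^r,d})\cup(\ts_n^{n-r}\times D_{\z_d})$. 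Using $\delta_{n-r}^*\lambda_{c^n_n}(d)\cong\lambda_{c^r_n}(d)$ from Proposition \ref{genmap} together with $\lambda_d(c^n_n)\cong\lambda_d(c^r_n)\otimes\z_d^{\otimes(n-r)}$ (Remark \ref{atform}), I will promote this to a scheme-theoretic identity of sections
\begin{equation*}
\sigma_{c^n_n,d}\bigm|_{\ts_n^{n-r}\times\mdh}\;=\;\bigl((\delta_{n-r}\times\mathrm{id})^{*}\sigma_{c^r_n,d}\bigr)\cdot\bigl(1\boxtimes\theta_d^{n-r}\bigr),
\end{equation*}
up to a nonzero scalar.

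To justify this identity rigorously I will work on the stack level, pulling back a universal family of $\mg_n^n$ along $\widetilde{\delta}_{n-r}$ (which over $\km(r,0,n)^{\mu s}$ is an isomorphism to an open dense substack of $\ks_n^{n-r}\setminus\ks_n^{n-r+1}$ by Lemma \ref{Snormal} and the discussion following Lemma \ref{lconsec}), and apply the see-saw-type construction of the determinant line bundles (cf.~Lemma \ref{VSD1}, Proposition \ref{vglob}) to the split extension $0\to\mg_n^r\to\mg_n^n\to\mo_{\p^2}^{\oplus(n-r)}\to 0$. Multiplicativity of the determinant construction on short exact sequences yields the factorization on an open dense subset; since $\ts_n^{n-r}$ is normal and the complement of this open set has codimension $\geq 2$, the identity extends by Hartogs.

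With the section identity in hand I define $\alpha_{\ts_n^{n-r}}(\phi)\in H^0(\mdh,\lambda_d(c^r_n))$ by $\alpha_{\ts_n^{n-r}}(\phi)(\mf):=\phi\bigl((\delta_{n-r}\times\mathrm{id})^{*}\sigma_{c^r_n,d}(-,\mf)\bigr)$, i.e.~as the strange-duality evaluation through $\delta_{n-r}$. The left square then commutes tautologically: for any $\phi\in H^0(\ts_n^{n-r},\lambda_{c^n_n}(d))^{\vee}$,
\begin{equation*}
SD_{c^r_n,d}\bigl((\delta_{n-r}^{*})^{\vee}\phi\bigr)(\mf)=\phi\bigl(\delta_{n-r}^{*}\sigma_{c^r_n,d}(-,\mf)\bigr)=\alpha_{\ts_n^{n-r}}(\phi)(\mf).
\end{equation*}
For the right square, evaluating at $\mf$ and using the section identity,
\begin{equation*}
SD_{c^n_n,d}\bigl((\gamma_n^{n-r})^{\vee}\phi\bigr)(\mf)=\phi\bigl(\sigma_{c^n_n,d}|_{\ts_n^{n-r}\times\{\mf\}}\bigr)=\phi\bigl((\delta_{n-r}\times\mathrm{id})^{*}\sigma_{c^r_n,d}(-,\mf)\bigr)\cdot\theta_d^{n-r}(\mf),
\end{equation*}
which is exactly $\jmath_n^r(\alpha_{\ts_n^{n-r}}(\phi))(\mf)$ by definition of $\jmath_n^r$ in (\ref{mzeta}).

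The main obstacle will be Step 1: establishing the section identity with the correct multiplicity $(n-r)$ of $\theta_d$. The set-theoretic statement is immediate, but upgrading it to an identity of sections of $\lambda_{c^n_n}(d)\boxtimes\lambda_d(c^n_n)$ requires verifying multiplicities on the strictly semistable divisorial locus where the universal family only exists stackwise, and then descending along the good quotient $\widetilde{\ts}_n^{n-r}\to\ts_n^{n-r}$. Normality of $\ts_n^{n-r}$ (Lemma \ref{Snormal}) together with the codimension-2 control on the strictly semistable locus (as in the proof of Lemma \ref{serres2}) is what makes the Hartogs extension legitimate; once that is done, the rest of the proposition is a formal rewriting.
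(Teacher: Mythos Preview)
Your approach is correct and is precisely the one the paper has in mind (the paper defers to Proposition~4.1 in \cite{Yuan7}, which is the $n=r+1$ case of exactly this argument): factor the canonical section $\sigma_{c^n_n,d}$ along $\ts_n^{n-r}\times\mdh$ using multiplicativity of the determinant-of-cohomology construction on the short exact sequence (\ref{consec}), then read off both squares.

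One correction to your write-up, however: it is \emph{not} true that every point of $\ts_n^{n-r}$ is $S$-equivalent to $\mg_n^r\oplus\mo_{\p^2}^{\oplus(n-r)}$. Lemma~\ref{lconsec}(1) says the opposite: for $\mg_n^r$ $\mu$-stable, the unique non-partially-split extension $\mg_n^n$ is \emph{stable}, hence its $S$-equivalence class is its isomorphism class, and it is not a direct sum. Consequently your displayed cohomology splitting $\bigoplus_i H^i((\mg_n^r\oplus\mo^{\oplus(n-r)})\otimes\mf)$ is not the right heuristic, and the phrase ``split extension'' later on is a slip. What actually carries the argument is the sentence you do write: multiplicativity of the determinant section on the (generally non-split) short exact sequence $0\to\mg_n^r\to\mg_n^n\to\mo_{\p^2}^{\oplus(n-r)}\to0$ over a flat family. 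That alone gives $\sigma_{c^n_n,d}|_{\ts_n^{n-r}\times\mdh}=((\delta_{n-r}\times\mathrm{id})^*\sigma_{c^r_n,d})\cdot(1\boxtimes\theta_d^{n-r})$ on the locus where the universal extension exists, and then normality of $\ts_n^{n-r}$ plus the codimension estimates finish as you say. Just drop the $S$-equivalence/splitting language from the motivation.
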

\begin{proof}The proof is analogous to that of Proposition 4.1 in \cite{Yuan7} and hence is omitted here.
\end{proof}
\begin{proof}[Proof of Theorem \ref{mainthm2}]The theorem follows straightforward from Proposition \ref{gpfmain3} and Theorem \ref{mainthm1}.
\end{proof}

\begin{rem}For every sheaf $\mg$ of class $c^r_n$, we can define a divisor $D_{\mg}:=\{\mf\in M(dH,0)\big| H^0(\mg\otimes\mf)\ne 0\}$.
If $D_{\mg}\neq M(dH,0)$, then up to scalars it gives a section $s_{\mg}$ of line bundle $\lambda_{d}(c^r_n)$.  Theorem \ref{dewe} actually implies that $\{s_{\mg}\}_{\mg\in M(n,0,n)}$ spans $H^0(M(dH,0),\lambda_{d}(c^n_n))$, i.e. $SD_{c^n_n,d}$ is effectively surjective (see Definition 4.12 in \cite{Yuan7}).  However, we don't know at now whether $SD_{c^r_n,d}$ is surjective or not for $r\leq n-2$.  If one could show that every section $s$ of the form $s'\cdot\theta_d^{n-r}$ can be written as a linear combination of $s_{\mg}$ with $\mg\in\ts_n^{n-r}$, then it would follow that $SD_{c^r_n,d}$ were also effectively surjective and hence an isomorphism.    
\end{rem}
\begin{rem}One may try to extend the strategy in this paper to other rational surfaces, such as Hirzebruch surfaces.  But it is not easy.  The tilting theory is in general much more complicated on other rational surfaces.  As a result it is difficult to get analogs of Theorem \ref{mainthm1} from the result of Derksen and Weyman (Theorem \ref{dewe}).  However, once we got an analog of Theorem \ref{mainthm1}, we would also get an analog of Theorem \ref{mainthm2} by the same routine.
\end{rem}

\begin{flushleft}{\textbf{Acknowledgments.}} I was supported by NSFC 11771229.  I would like to thank Jiarui Fei for his help on quiver representation theory. 
\end{flushleft}

%Yao YUAN \\
%Yau Mathematical Sciences Center, Tsinghua University, \\
%Beijing 100084, China\\
%E-mail: yyuan@mail.tsinghua.edu.cn.
\end{document}